\newtheorem{theorem}{Theorem}
\newtheorem*{theorem*}{Theorem}
\newtheorem*{definition*}{Definition}
\newtheorem*{lemma*}{Lemma}
\newtheorem*{example*}{Example}
\newtheorem*{exercise*}{Exercise}
\newtheorem*{remark*}{Remark}
\newtheorem{corollary}[theorem]{Corollary}
\newtheorem{condition}{}
\newtheorem{lemma}[theorem]{Lemma}
\newtheorem{proposition}[theorem]{Proposition}
\newtheorem{remark}[theorem]{Remark}
\newtheorem{example}[theorem]{Example}
\numberwithin{theorem}{section}
\numberwithin{figure}{section}
\numberwithin{equation}{section}
\journal{\quad}
\begin{document}

\begin{frontmatter}



\title{Support, absolute continuity and harmonic moments of \\ 
	fixed points of  
	the multivariate smoothing transform}


\author[label1]
{Jianzhang Mei}
 \address[label1]  
 {Department of Mathematical Sciences, Tsinghua University, Beijing, 100084, China}
\ead{mjz20@mails.tsinghua.edu.cn}	

\author[label2]{Quansheng Liu}
 \address[label2]  {Univ Bretagne Sud, CNRS UMR 6205, LMBA, F-56000, 
	Vannes, France}
	\ead{quansheng.liu@univ-ubs.fr}

%
%

\begin{abstract}
Consider the  multivariate smoothing transform fixed-point equation:  $\eta =$  law of 
$ \sum_{i=1}^N A_i Z_i$, where $N \geq 0$ is a random integer, $(A_i)_{i \geq 1}$ are $d \times d$ random nonnegative matrices,  $(Z_i)_{i \geq 1}$ is a sequence of  $\mathbb{R}_+^d$-valued random variables    independent of  $(N, A_1, A_2, \cdots)$, and all $Z_i$ have the same law $\eta$. For each fixed point $\eta$, under suitable conditions, we describe its support, establish  its  absolute continuity,  and prove the existence of its harmonic moments.
\end{abstract}
 



%

\begin{keyword}
multivariate smoothing transform \sep multidimensional Mandelbrot cascades \sep branching process \sep support \sep absolute continuity \sep  harmonic moments

\MSC[2020] 60G18 \sep 60G42 \sep 60B20 \sep 60J80
\end{keyword}

\end{frontmatter}


	\tableofcontents	

\section{Introduction}
\subsection{
	Background and objective}
Let $d \geq 1$ be an integer and  $(A_1,A_2,\cdots)$  a sequence of random $d \times d$  nonnegative matrices (whose   entries are all nonnegative). Assume that 
\begin{equation}
	N := \# \{ i \geq 1 : A_i \ne 0 \} < \infty  \quad  \mbox{ almost surely (a.s.).} 
\end{equation}  
Without loss of generality, we assume that 
$A_i \neq 0$ if $1 \leq i \leq N$ and $A_i=0$ if $i>N$. 
As usual we denote by  $\mathbb{R}^d_+ $ the set of column vectors $x \geq 0$,  where  $x \geq 0 $ means that all its components  are nonnegative. 
We consider the  fixed-point equation   of the multivariate smoothing transform $\mathcal{S}$ on the set of probability measures on   $\mathbb{R}^d_+ $: 
\begin{equation}\label{equ::smoothing_transform_original_form}
	\eta = \mathcal S(\eta) := \mbox{ law of }  \sum_{i=1}^N A_i Z_i,
\end{equation}
where $(Z_i)_{i\geq 1}$ is a sequence of independent and identically distributed (i.i.d.)  random variables with law $\eta$, independent of $(N, A_1, A_2, \cdots)$. 
In terms of random variables, the fixed-point equation 
also reads 
\begin{equation}\label{equ::smoothing_transform}
	Z \overset{\mathcal{L}}{=} \sum_{i = 1}^{N} A_i Z_i,
\end{equation}
where $Z$ has the same law $\eta$ as $Z_i$,  and $\overset{\mathcal{L}}{=} $ means the equality in law. 

For the one-dimensional case, the equation~\eqref{equ::smoothing_transform} 
has been studied by many authors, 
see for example:  \citet{mandelbrot1974multiplications, kahane1976certaines} and \citet{guivarc1990extension} for the study of the canonical self-similar cascades, also called Mandelbrot cascades; \citet{biggins1977martingale}, \citet{biggins1997seneta, biggins2005fixed},  Buraczewski, Iksanov and  Mallein \cite{buraczewski2021derivative}, 
	%
	%
	{\color{black} for branching random walks}; 
	\citet{durrett1983fixed} for infinite particle systems, \citet{liu1998fixed, liu2000generalized, liu2001asymptotic}  for a general study and various applications,
	\citet{alsmeyer2013fixed}  for $\mathbb R$-valued solutions instead of $\mathbb R_+$-valued solutions,
	\citet{bassetti2012self} for kinetic models,  \citet{jelenkovic2010information} for analysis of information ranking, and {Miller, Sheffield and Werner~\cite{miller2022simple}} for conformal loop ensembles (CLE). For other related models, see e.g. 
		\citet{MillardSchweinsbergYaglomtype}, \citet{MaillardPeningtonBrw}, \citet{ChenHelowerdeviation}, Chen, Garban and Shekhar \cite{ ChenGarbanShekharfixedbroanian},  \citet{hu2009minimal}.
	%
	The basic problems that we are interested in  include the existence and uniqueness  of solutions, and properties of the solutions. 
	
	%
	
	For the multidimensional case, a good progress has been made in the study of Equation~\eqref{equ::smoothing_transform} 
	in the last decade. In particular, 
	the existence and uniqueness of fixed points were shown in Buraczewski, Damek, Guivarc'h and Mentemeier~\cite{BurDaGuiMent14}  and~\citet{Ment16}; see also \citet{meiners2017solutions} for the case where $(A_i)_{i\geq 1}$ are complex numbers  or similarity matrices. In  \cite{BurDaGuiMent14}  and  \cite{Ment16}, it is also shown that 
	the fixed points have heavy tail under suitable assumptions; see also 
	Buraczewski, Damek, Mentemeier and Mirek~\cite{buraczewski2013heavy} for the case where $(A_i)_{i\geq 1}$ are invertible matrices. The multivariate version of Equation~\eqref{equ::smoothing_transform} has various applications in different contexts:  see e.g. \citet{bassetti2014multi}, Buraczewski et al. 
		\cite{buraczewski2013heavy} for kinetic models, ~\citet{huang2016moments} for branching random walks, \citet{leckey2019densities} and \citet{neininger2006survey} for the analysis of Quicksort algorithm and other related models. 
	
	
	
	\medskip 
	The objective of this article is to  study  the following properties of fixed points: 
	\begin{enumerate}
		\item[(a)]  The support.  For the one-dimensional case, 
		it was proven in~\cite{liu2001asymptotic} that the support of a fixed point is the positive real line. For the multidimensional case,  in 
		\cite{BurDaGuiMent14}  it was 
		shown that a fixed point takes value in a set that consists of many rays starting from the origin and their linear combinations with positive coefficients. In the present work we will show that  the support is exactly this set for the case where $(A_i)_{i \geq 1}$ are i.i.d., and we will also give some inclusion relations of the support for the non i.i.d. case. 
		\item[(b)] The absolute continuity. This property has been considered in \citet{liu2001asymptotic}, \citet{ damek2018absolute}, \citet{leckey2019densities}, and {  Damek, Gantert and Kolesko \cite{damek2019absolute}}, respectively for the one-dimensional $\mathbb R_+$-valued case, the complex-valued case, the invertible matrix case,  and the limit  of the fundamental martingale of a single-type branching process in random environment. 
		In this paper, for the general nonnegative  matrix case,  we will prove the absolute continuity  of fixed points, using the knowledge of their support,  and assuming the existence of some harmonic moments of the coefficients  $(A_i)_{i \geq 1}$. 
		\item[(c)]  The existence  of harmonic moments (moments of negative order). 
		This problem  has  been studied 
		in~\citet{liu2001asymptotic} and~\citet{huang2016moments}, 	 respectively for the one-dimensional case and multidimensional case. In this work  we will  improve the results of~\cite{huang2016moments}  by finding the critical value given by the spectral property of $A_1$. 
	\end{enumerate}
	
	
	\subsection{Statements of the main results}	
	
	\subsubsection{Notation and conditions}
	We first introduce some basic notation. Given a probability measure $\nu$ on a Polish space $\Omega$,  its support is defined by 
	\begin{equation} \label{def-support}
		\mathrm{supp}(\nu) = \{ x \in \Omega : \forall \text{ neigborhood } O \text{ of }x, \nu(O) > 0 \}.
	\end{equation}
	For a random variable $X$ with  law $\nu$, we define its support as $\mathrm{supp}(X) = \mathrm{supp}(\nu) $. For a vector $x = (x_1, \cdots, x_d)^T$, denote its $L^1$ norm by $|x| = \sum_{i = 1}^d|x_i|$,  and write $\langle x, y \rangle = \sum_{i = 1}^d x_iy_i$ for the scalar product of $x$ and $y = (y_1, \cdots, y_d)^T$.    Let $\mathbb{S}^{d-1} = \{ x \in \mathbb{R}^d : |x| = 1 \}$ be the unit sphere and $\mathbb{S}_{+}^{d-1} = \mathbb{S}^{d-1} \cap \mathbb{R}^d_+$ be its nonnegative orthant.
	Let $G$ be the set of nonnegative $d\times d$ matrices.  
	For a matrix $a \in G$, we denote by $\| a \|$ its operator norm, {\color{black} and}  let $r(a)$ be its spectral radius. 
	For a matrix or vector $a$, we write $a > 0$ to mean that $a$ is strictly positive, that is,  all its entries are strictly positive. 
	
	We will use the following conditions on the point process $(N, A_1, A_2,\cdots)$.
	\begin{condition}
		\label{cond::conditions_on_N} 
			$N \geq 1$  a.s., $ \mathbb{E}[N] \in (1, +\infty)$, $A_i \in G$ and  $A_i \neq 0$ for $1\leq  i\leq N$, and $A_i =0$ for $i >N$. 
	\end{condition}
	Let $\mu$ be the probability measure on $G$ such that for all continuous bounded function $f: G \rightarrow \mathbb R$, 
	\begin{align}
		\int f(a)\mu(da) = \frac{1}{\mathbb{E}[N]} \mathbb{E} \big[ \sum_{i = 1}^N f(A_i) \big].
	\end{align}
	Let
	\begin{equation}
		\Gamma := \{ a_1\cdots a_n : a_i \in \mathrm{supp}(\mu), n \geq 0 \}
	\end{equation}	
	be the multiplicative semigroup generated by $\mathrm{supp}(\mu)$, with the convention that $a_1 \cdots a_n$ is the identity matrix if $n = 0$.  
	
	\begin{condition}\label{cond::condition_on_mu}
		The measure $\mu$ satisfies the following conditions:
		\begin{itemize}
			\item (allowability) for each matrix $a \in \Gamma$, every row and column of $a$ has at least one  strictly  positive entry; 
			\item (positivity) the semigroup  $\Gamma$ contains at least one strictly positive matrix.  
			
		\end{itemize}
	\end{condition}
	
	Notice that this condition is weaker than that used in 	\cite{BurDaGuiMent14}, where  the allowability is assumed for each element of the closure of $\Gamma$.

	We also need some notation on the Ulam-Harris tree. Let $\mathbb{N}^* = \{ 1, 2, \cdots \}$ and $\mathbb{U} = \cup_{n = 0}^{\infty} (\mathbb{N}^*)^n$ with the convention that $(\mathbb{N}^*)^0 = \{ \emptyset \}$. An element of level $n$ will be denoted by $u = (u_1, \cdots, u_n)  = u_1 \cdots u_n \in \mathbb{U}$. For any integer $k \geq 0$ and any sequence $u = u_1\cdots u_n$, we write $u|k = u_1\cdots u_k$ if $k \leq n$; 
	by convention 
	$u|0=\emptyset$. {\color{black} We write $v \leq u$ if there exists $k \geq 0$ such that $u | k = v$.} The same notation is used 
	for an infinite sequence $u = u_1u_2\cdots$. 
	We say that $U \subset \mathbb{U}$ is a cover set of $ (\mathbb{N}^*)^\infty$, if for any infinite sequence $u = u_1u_2\cdots \in (\mathbb{N}^*)^\infty$, there exists a unique integer $k \geq 0$ such that $u|k \in U$. We say that a subset of $U \subset \mathbb{U}$ has  level $l$  if $l= \sup \{ |u| : u \in U \}$, where $|u|$ denotes the length of the sequence $u$; we say that it has finite level when $l< \infty$. 
	
	Let $(N_u, A_{u1}, A_{u2}, \cdots)_{u \in \mathbb{U}}$    be a family of  i.i.d.  random vectors   defined on some probability space $(\Omega, \mathcal{F}, \mathbb{P})$,  with the convention  that  $(N_\emptyset , A_{\emptyset 1}, A_{\emptyset 2}, \cdots) = (N, A_1, A_2, \cdots)$. By iterating Equation~\eqref{equ::smoothing_transform}, we notice that if $Z$ is a solution to Equation~\eqref{equ::smoothing_transform}, then for any cover set $U$ with finite level,
	\begin{equation}
		Z \overset{\mathcal{L}}{=} \sum_{u \in U} A_{u|1}\cdots A_{u| |u|} Z_u,
	\end{equation}  
	where $(Z_u)_{u \in \mathbb{U}}$ are i.i.d. copies of $Z$, independent of $\{(N_u, A_{u1}, A_{u2}, \cdots)\}_{u \in \mathbb{U}}$.  Notice that by Condition~\ref{cond::conditions_on_N}, there are a.s. only a  finite number of  non-zero terms  in the above summation when $U$ has finite level. 
	
	
	\begin{condition}\label{cond::conditions_on_both_N_and_mu}
		There exist two cover sets $U_i$ of finite levels 
		and two realizations $\{ (a_{v1}^{(i)}, a_{v2}^{(i)}, \cdots)\}_{v\in \mathbb{U}} \subset \mathrm{supp}(A_1,A_2,\cdots)$ such that
		\begin{equation}
			l_i := \sum_{u  \in U_i} a_{u|1}^{(i)} \cdots a_{u| |u|}^{(i)}, \quad i = 1,2,
		\end{equation}
		satisfy
		\begin{equation}\label{l1-ge-0-l2-ge-0-rl1-le-1-rl2-ge-1}
			l_1 >0, 	\; l_2 > 0, \quad r(l_1) < 1 ,  \; r(l_2) > 1
		\end{equation}
		(recall that $r(l_i)$ denotes the spectral radius of $l_i$). 
		
		
	\end{condition}
	A sufficient condition for \ref{cond::conditions_on_both_N_and_mu}  to hold  is that there exist two matrices $l_1, l_2$ within  the support of the sum matrix 
	\begin{equation}
		Y = \sum_{i = 1}^N A_i
	\end{equation}	
	such that~\eqref{l1-ge-0-l2-ge-0-rl1-le-1-rl2-ge-1} holds. In this case, $l_i$ can be taken  in the form 
	$l_i= a_1^{(i)}  + \cdots +  a_{n_i}^{ (i)}$,  where  $ (n_i, a_1^{(i)}, \cdots, a_n^{(i)}, 0,  \cdots) \in \mbox{supp} (N, A_1, \cdots, A_N, 0, \cdots)$, and Condition~\ref{cond::conditions_on_both_N_and_mu} holds with $U_1=U_2 = \mathbb N^*$, 
	and,  for all   $v\in \mathbb{U}$, 
	$a_{vj}^{(i)} = a_j^{(i)} $ if $ j \leq n_i$, and    $ a_{vj}^{(i)}  =0 $ if  $j >n_i$.  
	In  the one-dimensional case $d=1$, this sufficient condition was used in~\citet{liu2001asymptotic} to prove that the support of solution is the whole positive real line. Here, for the general case $d \geq 1$, Condition~\ref{cond::conditions_on_both_N_and_mu} is used 
	in the proof of Theorem~\ref{thm::second_theorem}, where the two matrices 
	$l_1$ and $l_2$ serve to explore the points of the support near the origin and infinity.  
	
	Another sufficient condition for Condition~\ref{cond::conditions_on_both_N_and_mu} to  hold is the 
	non-arithmeticity condition (see condition \ref{non-arithmetic-cond}),  when $\alpha = 1$, where $\alpha$ is defined  in Condition~\ref{cond::conditions_on_alpha} below. This will be seen  in  
	the next section, in Proposition \ref{prop::non-arithmeticity-implies-others}. 	    
	The non-arithmeticity is frequently used in the literature to study the products of random matrices, see e.g.,  \citet{buraczewski2016precise} and  Xiao, Grama and  Liu~\cite{XiaoGramaLiu_BerryEssen22}.

	Let
	\begin{equation}
		I_\mu = \left\{ s \geq 0 : \int \| a \|^s \mu(da) < \infty \right\}, 
	\end{equation}
	and $(M_i)_{i \geq 1}$ be a sequence of i.i.d. matrices with law $\mu$.
	For $s \in I_\mu$,  define the log-convex functions
	\begin{equation}
		\kappa(s) = \lim_{n \to \infty}\mathbb{E}[\|M_n \cdots M_1 \|^s]^{\frac{1}{n}}, \quad m(s) = \mathbb{E}[N] \kappa(s). 
	\end{equation}
	Notice that in the one-dimensional case $\kappa $ reduces to the Laplace transform of $\log M_1$. For $B\subset \mathbb{R}$, denote its interior by $\mathrm{int}(B)$.

	
	\begin{condition}\label{cond::conditions_on_alpha}
		There exists an $\alpha \in (0, 1] \cap \mathrm{int}(I_\mu)$ such that $m(\alpha) = 1$ and $m'(\alpha) < 0$. 
	\end{condition}
	{ We remark that from~\cite[Lemma 4.14]{BurDaGuiMent14}, when $\alpha = 1$, we have $\kappa(1) = r(\mathbb{E}M_1)$. }
	
	Condition~\ref{cond::conditions_on_alpha} has been proved to be essential  for the existence of solution to  the distributional  equation \eqref{equ::smoothing_transform}.
	For the case $\alpha  = 1$, the existence and uniqueness of nonzero solution to~\eqref{equ::smoothing_transform} were shown
	in~\cite[Theorem 2.3]{BurDaGuiMent14}, under Condition~\ref{cond::conditions_on_independence} stated below. 
	For the general case $\alpha \in (0, 1]$, the characterization of the solution was given in~\cite{Ment16} by using the Laplace transform. 
	

	\begin{condition}\label{cond::conditions_on_independence}
		Given $N$, the matrices $A_1,\cdots,  A_N$  are conditionally i.i.d. with law $\mu$.
	\end{condition}
	This simple case is 
	of particular interest.  It will be referred as the i.i.d. case. 
	

	\subsubsection{Support of solution}
	For  a matrix $a > 0$, we denote by $v(a)$ its Perron-Frobenius right eigenvector with unit norm, { that is, $v(a)$ is the unique positive vector satisfying $av(a) = r(a)v(a)$ and $|v(a)| = 1$.} Define 
	\begin{equation}
		\Lambda = \overline{\{v(a) : a \in \Gamma, a >0\}}, 
	\end{equation}
	where   $\overline{\{ \cdot \}}$ denotes the closure of the set $\{ \cdot \}$. 
	Our main theorem on the support of solution is as follows.
	
	\begin{theorem}[Support of solution in the i.i.d. case]\label{thm::second_theorem}
		Assume Conditions \ref{cond::conditions_on_N}, \ref{cond::condition_on_mu},  \ref{cond::conditions_on_both_N_and_mu}, \ref{cond::conditions_on_alpha},\ref{cond::conditions_on_independence} and $\alpha = 1$. Let $Z$ be a solution of Equation~\eqref{equ::smoothing_transform} such that $\mathbb{P}[Z = 0] = 0$ {   and $\mathbb{E}[|Z|] < \infty$}. Then, 
		\begin{equation}\label{overline-D-supp-Z-overline-H}
			D \subset \mathrm{supp}(Z) \subset H,
		\end{equation}
		where 
		\begin{align} \label{def-D}
			D & := \{ s_1v_1 + \cdots + s_n v_n : n \in \mathbb{N}^*, n \leq  \mathrm{esssup}(N), s_i \geq 0, v_i \in \Lambda, \forall i = 1, \cdots, n \}, \\
			H & := \{ s_1v_1 + \cdots + s_nv_n : n \in \mathbb {N}^*, s_i \geq 0, v_i \in \Lambda, \forall i = 1, \cdots, n \}. 
			\label{H-s1v1+codts-snvn-n-ge-1}
		\end{align}
		Moreover, when $\mathrm{esssup}(N) \geq d$, we have
		\begin{equation}\label{supp-Z-H-d}
			\mathrm{supp}(Z) = H =  H_d, \mbox{ where } 
			H_d = \{ s_1v_1 + \cdots + s_dv_d :  s_i \geq 0, v_i \in \Lambda, \forall i = 1, \cdots, d \}. 
		\end{equation}
	\end{theorem}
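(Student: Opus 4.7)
The plan is to prove the two inclusions $D \subset \mathrm{supp}(Z)$ and $\mathrm{supp}(Z) \subset H$ of \eqref{overline-D-supp-Z-overline-H} separately, and then deduce the stronger equality \eqref{supp-Z-H-d} by a Carath\'eodory-type reduction of $H$ to $H_d$.

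For the upper bound $\mathrm{supp}(Z) \subset H$, I would first observe that $H$ is the closed convex cone generated by $\Lambda$; closedness follows from compactness of $\Lambda \subset \mathbb{S}_+^{d-1}$. The central lemma is that $H$ is stable under the action of $\mathrm{supp}(\mu)$, i.e.\ $aH \subset H$ for every $a \in \mathrm{supp}(\mu)$. By density in $\Lambda$ and linearity, this reduces to $av \in H$ when $v = v(b)$ with $b \in \Gamma$, $b > 0$. The allowability clause of Condition~\ref{cond::condition_on_mu} then forces $ab^m$ to be strictly positive for every $m \geq 1$, and the Perron--Frobenius asymptotics $b^m/r(b)^m \to v(b)\ell_b^T$ imply $v(ab^m) \to av(b)/|av(b)|$, placing $av(b)/|av(b)|$ in $\Lambda$ and hence $av(b)$ in $H$. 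Iterating the smoothing transform $\mathcal{S}$ from any initial law supported on $H$ therefore stays within $H$; combined with the uniqueness of the fixed point from~\cite{BurDaGuiMent14} and closedness of $H$, this yields $\mathrm{supp}(Z) \subset H$.

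For the lower bound $D \subset \mathrm{supp}(Z)$, fix $x = s_1 v_1 + \cdots + s_n v_n \in D$ with $n \leq \mathrm{esssup}(N)$. Since $\mathrm{supp}(Z)$ is closed, it suffices to construct, for every $\varepsilon > 0$, a positive-probability event on which the iterated expansion $\sum_{u \in U} A_{u|1} \cdots A_{u| |u|} Z_u$ is within $\varepsilon$ of $x$ for a suitable finite-level cover set $U$. First prescribe a realization of $(N, A_1, \ldots, A_N)$ at the root with exactly $n$ nonzero matrices, which is legitimate because $n \leq \mathrm{esssup}(N)$; this produces $n$ subtrees. In the $i$-th subtree, approximate $v_i$ by $v(b_i)$ for some strictly positive $b_i \in \Gamma$ and prescribe further matrices along a distinguished path so that the induced product equals a high power $b_i^{k_i}$; by the Perron--Frobenius asymptotics this steers the direction of the resulting vector toward $v(b_i)$. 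To match the prescribed magnitude $s_i$, insert subtree configurations realizing the matrices $l_1$ and $l_2$ from Condition~\ref{cond::conditions_on_both_N_and_mu}: because $l_1, l_2$ are strictly positive and $r(l_1) < 1 < r(l_2)$, suitable compositions of $l_1$- and $l_2$-blocks produce any desired positive scalar factor in the limit. The i.i.d.\ leaves $Z_u$ fall in any prescribed neighborhood of a positive vector with positive probability, which completes the approximation.

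For the moreover part, Carath\'eodory's theorem for convex cones applied to $\Lambda \subset \mathbb{R}^d$ shows that every element of $H$ is already a nonnegative combination of at most $d$ vectors of $\Lambda$, so $H = H_d$. When $\mathrm{esssup}(N) \geq d$ we have $H_d \subset D$, and combining with the two inclusions above yields $\mathrm{supp}(Z) = H = H_d$. The main obstacle will be the lower bound, and specifically the joint direction-and-scale control in each of the $n$ subtrees: direction stabilization by iterating a positive matrix toward its Perron--Frobenius eigenvector is standard, but matching the scale $s_i$ in the limit requires the full strength of Condition~\ref{cond::conditions_on_both_N_and_mu}, where the strict inequalities $r(l_1) < 1$ and $r(l_2) > 1$ together with strict positivity of $l_1, l_2$ are what permit arbitrary positive rescaling via iterated compositions; arranging all of this on a single finite-level cover set, uniformly across the $n$ subtrees, is the most delicate part.
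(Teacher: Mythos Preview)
Your upper bound and the ``moreover'' part are essentially what the paper does; the invariance $a\Lambda\subset\Lambda$ for $a\in\mathrm{supp}(\mu)$ is Lemma~4.2 of~\cite{BurDaGuiMent14}, and the paper then invokes the explicit martingale representation $Z\overset{\mathcal L}{=}c\lim_n\sum_{|u|=n}G_u v$ from that reference, which is what you need (uniqueness alone does not tell you that iterating $\mathcal S$ from a law on $H$ converges to $Z$). The Carath\'eodory reduction $H=H_d$ is exactly the paper's argument.

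The lower bound, however, has a real gap at the scale-matching step. You assert that ``suitable compositions of $l_1$- and $l_2$-blocks produce any desired positive scalar factor in the limit,'' but this is not true in general: the achievable scalars from products $l_2^{n}l_1^{m}$ are essentially $\{r(l_2)^n r(l_1)^m:m,n\ge0\}$, and if $\log r(l_1)/\log r(l_2)$ is rational this set is a discrete lattice in $\log$-scale, missing most targets. You also cannot use the leaf values $Z_u$ to absorb the residual factor, because at this stage you only know that $\mathrm{supp}(Z)$ contains isolated points (a nonzero $z_0$ and its $l_1,l_2$ iterates), not a continuum.

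The paper solves exactly this problem by a branching/dyadic trick rather than by pure composition. It first constructs (Lemma~\ref{lem::g1g2}) matrices $g_1,g_2>0$ with $r(g_1)\in[\tfrac12,1)$ and $\{g_1z_1+g_2z_2:z_1,z_2\in\mathrm{supp}(Z)\}\subset\mathrm{supp}(Z)$; crucially, $g_1$ is obtained not as a product of $l_1,l_2$ but as a \emph{sum} $\sum_{\eta\in I}l_2^n g_\eta l_1^{m_\eta}$ over a carefully chosen subset $I$ of a branching index set, so its spectral radius can be tuned into $[\tfrac12,1)$. Then (Lemma~\ref{lem::contains_one_ray}) one shows $\sum_{m\ge0}\eta_m g_1^m(g_2 z)\in\mathrm{supp}(Z)$ for every $\{0,1\}$-sequence $(\eta_m)$, and the elementary fact (Lemma~\ref{lem::denseness_dyadic_representation}) that $\{\sum_{m\ge1}\eta_m\theta^m:\eta_m\in\{0,1\}\}=[0,\theta/(1-\theta)]$ for $\theta\in[\tfrac12,1)$ shows that these sums fill a full segment in the direction $v_2$. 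This gives one ray $\mathbb R_+v_2\subset\mathrm{supp}(Z)$, from which Part~1 of Theorem~\ref{thm::main_theorem} then produces $\sum_{i=1}^n\mathbb R_+v(g_i)\subset\mathrm{supp}(Z)$ for any strictly positive $g_1,\ldots,g_n\in\Gamma$ with $n\in\mathrm{supp}(N)$, yielding $D\subset\mathrm{supp}(Z)$. Your outline is missing this mechanism entirely; without it the lower bound does not go through.
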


	
	
	For the case where the coefficients are  not necessarily i.i.d., 
	instead of Theorem \ref{thm::second_theorem},
	we have the following inclusion relations of the support. {\color{black} Let $\mathbb{R}_+ = \{ s \in \mathbb{R} : s \geq 0 \}$ and $\mathbb{R}_+ v = \{ sv : s \in \mathbb{R}_+ \}$} for any vector $v$. 
	
	\begin{theorem}[Support of solution]\label{thm::main_theorem}
		Assume Conditions \ref{cond::conditions_on_N}, \ref{cond::condition_on_mu},  \ref{cond::conditions_on_both_N_and_mu}. Suppose that $Z$ is a solution of Equation~\eqref{equ::smoothing_transform} such that $\mathbb{P}[Z = 0] = 0$. Then:
		\begin{enumerate}
			\item For any cover set $U \subset \mathbb{U}$ with finite level, and any realizations $\{(a_{u1},a_{u2},\cdots)\}_{u \in \mathbb{U}}\subset \mathrm{supp}(A_1,A_2,\cdots)$, we have, 
			\begin{align}\label{equ::support-containus-many-rays}
				\mathrm{supp}(Z) \supset \sum_{\substack{V \subset U,\, g_V > 0}} \mathbb{R}_+ v(g_{V}),  
			\end{align}	
			where 
			\begin{equation}
				g_V := \sum_{u \in V} a_{u|1}\cdots a_{u| |u|}.
			\end{equation}
			\item If additionally  
			Condition~\ref{cond::conditions_on_alpha} holds with $\alpha = 1$ {   and $\mathbb{E}[|Z|] < \infty$}, then
			with $H, H_d$ defined in \eqref{H-s1v1+codts-snvn-n-ge-1}, \eqref{supp-Z-H-d},
			\begin{equation}
				\mathrm{supp}(Z) \subset H  = H_d. 
			\end{equation}
		\end{enumerate}
	\end{theorem}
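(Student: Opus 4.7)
The proof splits into two inclusions. For Part 1, the plan is to exploit the iterated smoothing equation
\[
Z \overset{\mathcal L}{=} \sum_{u\in U} A_{u|1}\cdots A_{u| |u|}\, Z_u,
\]
by conditioning on the positive-probability event that the random configurations $(N_u,A_{u1},A_{u2},\ldots)$ at each relevant node $u$ lie in small neighborhoods of the prescribed realizations $(n_u,a_{u1},a_{u2},\ldots)$. I would first prove the single-ray containment $\mathbb R_+ v(g_V)\subset\mathrm{supp}(Z)$ for each admissible $V\subset U$. Iterating $k$ more times along the $V$-substructure produces a dominant term of the form $g_V^{k}\, Z_{u}$ on a positive-probability event. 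Since $g_V$ is strictly positive, Perron-Frobenius yields $g_V^k / r(g_V)^k \to v(g_V)\,\phi(g_V)^{\top}$ with $\phi(g_V) > 0$ the left eigenvector, so $g_V^k Z_u$ is approximately $r(g_V)^k\langle\phi(g_V), Z_u\rangle\, v(g_V)$, a vector in the ray $\mathbb R_+ v(g_V)$. To tune the scalar prefactor to any prescribed $s \geq 0$ (including $s = 0$ and arbitrarily large $s$), I would invoke Condition~\ref{cond::conditions_on_both_N_and_mu}: by further iteration one can realize $l_1$ (with $r(l_1) < 1$) or $l_2$ (with $r(l_2) > 1$) as additional factors along the path at positive probability, yielding an effective prefactor that can approximate any $s \in \mathbb R_+$ by choosing the numbers of insertions suitably.

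The Minkowski-sum upgrade then follows by allocating different disjoint subtrees of a deep iteration to different target terms $s_i v(g_{V_i})$: the branching independence lets each subtree execute the single-ray argument for its own target, and the global sum $\sum_u g_u Z_u$ lies within any prescribed $\varepsilon$ of $\sum_i s_i v(g_{V_i})$ on the intersection of the positive-probability events, placing this point in $\mathrm{supp}(Z)$.

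For Part 2, note first that $\Lambda$ is a closed subset of the compact sphere $\mathbb S^{d-1}_+$, hence compact; the conic hull $H$ is therefore closed, and Carath\'eodory's theorem for convex cones in $\mathbb R^d$ gives the equality $H = H_d$ immediately. It then suffices to prove $\mathbb P(Z \in H) = 1$, since $H$ closed forces $\mathrm{supp}(Z) \subset H$. Iterating to depth $n$ yields $Z \overset{\mathcal L}{=} Y_n := \sum_{|u| = n} L_u Z_u$ with $L_u := A_{u|1} \cdots A_{u|n} \in \Gamma$ on $\{L_u \ne 0\}$. Under Condition~\ref{cond::condition_on_mu}, allowability together with the presence of a strictly positive matrix in $\Gamma$ ensures that once a strictly positive block appears anywhere along the line of descent leading to $u$, the whole product $L_u$ becomes strictly positive; a renewal-type argument shows the probability that such a block appears in $L_u$ tends to $1$ as $n \to \infty$. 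For positive $L_u$, Birkhoff's projective contraction provides a rank-one approximation
\[
L_u = \lambda_u\, v(L_u)\, w(L_u)^{\top} + E_u, \qquad v(L_u) \in \Lambda,
\]
with relative error $\|E_u\|/\|L_u\| \to 0$ exponentially in the number of positive blocks. Hence each $L_u Z_u$ lies within $\|E_u\|\,|Z_u|$ of the $H$-element $\lambda_u\langle w(L_u), Z_u\rangle v(L_u) \in \mathbb R_+ v(L_u) \subset H$, and
\[
d(Y_n, H) \;\leq\; \sum_{|u| = n} \|E_u\|\, |Z_u|.
\]
Under Condition~\ref{cond::conditions_on_alpha} with $\alpha = 1$ (so $m(1) = 1$) and $\mathbb E|Z| < \infty$, the expected total mass $\mathbb E[\sum_{|u| = n} \|L_u\|\, |Z_u|]$ is uniformly bounded in $n$, so $\mathbb E[d(Y_n, H)] \to 0$. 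Since $Y_n \overset{\mathcal L}{=} Z$ for every $n$, the random variable $d(Z, H)$ has the same distribution as $d(Y_n, H)$; therefore $d(Z, H) = 0$ a.s., and $Z \in H = \overline{H}$ almost surely.

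The main obstacle I anticipate is the quantitative Birkhoff/renewal control in Part 2: one must uniformly bound the rank-one error $\|E_u\|/\|L_u\|$ across the many branches (including the unlucky ones without a positive block), while simultaneously keeping $\sum_{|u| = n} \|L_u\|\,|Z_u|$ integrable at the critical threshold $m(1) = 1$. In Part 1, the delicate point is not conceptual but organizational: one must arrange the positive-probability events across many disjoint subtrees with prescribed spectral prefactors simultaneously, and verify that the Perron-Frobenius convergence rates are uniform enough that the aggregated error can be made smaller than any $\varepsilon$ on a single joint positive-probability event.
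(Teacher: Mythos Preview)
Your tuning argument says that by inserting powers of $l_1$ and $l_2$ along the line of descent you can arrange the scalar prefactor to approximate any $s \in \mathbb{R}_+$. But the multiplicative semigroup $\{r(l_1)^m r(l_2)^n : m,n \in \mathbb{N}\}$ need not be dense in $\mathbb{R}_+$: if $\log r(l_1)$ and $\log r(l_2)$ are rationally dependent (say $r(l_1)=1/2$, $r(l_2)=4$) you obtain only a lattice of values. You cannot rescue this by letting $Z_u$ vary over $\mathrm{supp}(Z)$, since a priori you do not know that the scalar $\langle \phi(g_V), Z\rangle$ has support containing an interval; that is exactly the sort of richness you are trying to establish. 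The paper resolves this by a different mechanism: it first constructs auxiliary matrices $g_1,g_2>0$ with $r(g_1)\in[1/2,1)$ and the inclusion $\{g_1 z_1 + g_2 z_2 : z_i\in\mathrm{supp}(Z)\}\subset\mathrm{supp}(Z)$, and then uses the elementary fact that for $\theta\in[1/2,1)$ every real in $[0,\theta/(1-\theta)]$ has a base-$\theta$ expansion $\sum_{m\geq 1}\eta_m\theta^m$ with $\eta_m\in\{0,1\}$. This produces an entire segment $[0,c_2]v_2\subset\mathrm{supp}(Z)$, which is then stretched to the full ray by powers of $l_2$. Once a single ray $\mathbb{R}_+v_2$ is available, the individual rays $\mathbb{R}_+v(g_V)$ follow by iterating $g_V$ on $v_2$; the Minkowski sum is then obtained not by ``allocating disjoint subtrees'' (your vague formulation runs into the problem that the $g_V$'s share the same building blocks $g_u$) but via an inclusion--exclusion decomposition $\sum_{V\subset\tilde U} I_{V,n}\subset\mathrm{supp}(Z)$ together with the strict monotonicity $r(g_W)<r(g_V)$ for $W\subsetneq V$.

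\textbf{Part 2 is a genuinely different route.} The paper avoids the Birkhoff/renewal analysis entirely by quoting structural results from \cite{BurDaGuiMent14}: under $\alpha=1$ and $\mathbb{E}|Z|<\infty$, there is an eigenmeasure $\nu$ supported on $\Lambda$ with barycenter $v=\int x\,\nu(dx)\in\overline{H}$ equal to the Perron--Frobenius eigenvector of $\mathbb{E}[\sum_i A_i]$, and moreover $Z \overset{\mathcal L}{=} c\lim_n\sum_{|u|=n}G_u v$. Since $a\cdot w\in\Lambda$ for $a\in\Gamma$, $w\in\Lambda$, each $G_u v$ lies in $\overline{H}$, hence so does the limit. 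Your direct approach via rank-one approximation of $L_u$ is plausible, and the Carath\'eodory reduction $H=H_d$ is the same, but the error control you flag is real: you must show $\mathbb{E}\bigl[\sum_{|u|=n}\|L_u\|\bigl(c(L_u)+\mathbb{1}_{\{L_u\not>0\}}\bigr)\bigr]\to 0$ at the critical threshold $m(1)=1$, where the mean total mass does not decay. This requires decoupling the contraction coefficient from the norm under the size-biased measure, which is doable but is precisely the work the paper sidesteps by invoking the ready-made representation.
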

	
	We remark that the number of summands in~\eqref{equ::support-containus-many-rays} can be very large, because it corresponds to the number of subsets  $V$ of $U$ such that $g_V > 0$. Theorem~\ref{thm::main_theorem} shows that the projection of the support to $\mathbb{S}_{+}^{d-1}$ contains the convex hull generated by $\{ v(g_V) : V \subset U, g_V > 0 \}$ for any conver set $U$ with finite level and all realizations $\{ (a_{u1},a_{u2},\cdots)\}_{u \in \mathbb{U}}$; in particular, 
	\begin{equation*}
		\mathrm{supp}(Z) \supset \{ sv: s \geq 0, v \in \Lambda \}.
	\end{equation*}
	
	The characterization on solutions to~\eqref{equ::smoothing_transform} with $\alpha \in (0, 1)$ was derived under some assumptions in~\cite{Ment16}. For such solutions, we also have the same result as Part 1 of Theorem~\ref{thm::main_theorem}. {\color{black} For a nonnegative matrix $a$, we define $\iota(a) = \inf_{x \in \mathbb{S}_{+}^{d-1}} |ax|$. For a real number $y \geq 0$, we write $\log_- y = \max (- \log y, 0)$ and $\log_+ y = \max(\log y, 0)$ (by convention $\log 0 = -\infty$).} 
	\begin{condition}\label{cond::condition_Ment16}
		{\color{black} The measure $\mu$ satisfies
			$
			\int  \|a\|^{\alpha}( \log_+\|a\| + \log_-\iota(a^T)) d\mu(a) < +\infty.
			$
		}
	\end{condition}
	

	\begin{theorem}[Support of solution for $\alpha <1$]\label{thm::third_theorem}
		Assume Conditions~\ref{cond::conditions_on_N}, \ref{cond::condition_on_mu}, \ref{cond::conditions_on_alpha}, \ref{cond::condition_Ment16} and $\alpha \in (0, 1)$. Suppose that $Z$ is a solution of Equation~\eqref{equ::smoothing_transform} such that $\mathbb{P}[Z = 0] = 0$. Then \eqref{equ::support-containus-many-rays} holds for all cover set $U \subset \mathbb{U}$ with finite level, and any $(a_{u1},a_{u2},\cdots)\in \mathrm{supp}(A_1,A_2,\cdots)$ for $u \in \mathbb{U}$.
	\end{theorem}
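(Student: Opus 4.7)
The plan is to adapt the proof of Part 1 of Theorem~\ref{thm::main_theorem} to the $\alpha \in (0,1)$ regime, with the Mentemeier characterization of solutions~\cite{Ment16} (available under Condition~\ref{cond::condition_Ment16}) taking the place of the $\alpha = 1$ characterization from~\cite{BurDaGuiMent14} used there.

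The first ingredient is a \emph{support-transfer principle}, obtained by iterating equation~\eqref{equ::smoothing_transform} along any cover set $U \subset \mathbb{U}$ of finite level to obtain
\begin{equation*}
Z \overset{\mathcal{L}}{=} \sum_{u \in U} T_u Z_u, \qquad T_u := A_{u|1}\cdots A_{u||u|},
\end{equation*}
with $(Z_u)_{u \in U}$ i.i.d.\ copies of $Z$ independent of $(T_u)_{u \in U}$. A standard continuity argument --- every neighborhood of a realization $(a_{u|1}\cdots a_{u||u|})_{u \in U}$ is charged by the joint law of $(T_u)_{u \in U}$, and the $Z_u$'s are independent with common support $\mathrm{supp}(Z)$ --- yields: for any realization $\{(a_{uj})\}_{u \in \mathbb{U},\, j \geq 1}$ in the support of $(A_{uj})$ and any $(z_u)_{u \in U} \subset \mathrm{supp}(Z)$,
\begin{equation*}
\sum_{u \in U} a_{u|1}\cdots a_{u||u|}\, z_u \;\in\; \mathrm{supp}(Z).
\end{equation*}

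Next, for each $V \subset U$ with $g_V > 0$, I exploit Perron--Frobenius: $g_V^n / r(g_V)^n \to v(g_V)\,\ell(g_V)^{\top}$, with $\ell(g_V)$ the positive left eigenvector of $g_V$. By grafting the realization above every leaf lying in $V$ repeatedly $n$ times and invoking the support-transfer principle, one obtains $g_V^n z + (\text{remainder}) \in \mathrm{supp}(Z)$ for arbitrary $z \in \mathrm{supp}(Z)$; after suitable normalization and choice of the remainder terms (i.e., the values $z_u$ attached to leaves outside the $V$-branches), this produces points arbitrarily close to each $s\,v(g_V)$, $s \geq 0$, giving $\mathbb{R}_+ v(g_V) \subset \mathrm{supp}(Z)$. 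The full Minkowski sum is then obtained by running these iterations in parallel on disjoint subtrees of the refined cover set, so that each ray $s_V v(g_V)$ is realized independently, and reassembling via the support-transfer.

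The main obstacle is to guarantee that the scaling factor in front of each $v(g_V)$ can be made arbitrary in $\mathbb{R}_+$. In Part 1 of Theorem~\ref{thm::main_theorem}, Condition~\ref{cond::conditions_on_both_N_and_mu} directly supplies matrices $l_1, l_2$ with $r(l_1) < 1 < r(l_2)$, which combine with the Perron iteration to yield coefficients of arbitrary magnitude. Here, without that condition, the scaling has to be extracted from the structural characterization of solutions of Mentemeier~\cite{Ment16}: in the regime $\alpha \in (0,1)$ with $m(\alpha) = 1$, $m'(\alpha) < 0$, and Condition~\ref{cond::condition_Ment16}, this Laplace-transform characterization supplies precisely the symmetry of $Z$ needed to reproduce the scaling step above. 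Making this rigorous --- interfacing the Perron iteration with Mentemeier's description of $Z$ --- constitutes the bulk of the technical work.
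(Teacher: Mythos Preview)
Your outline correctly identifies the support-transfer principle~\eqref{equ::support_breaking_cover_set}, the Perron--Frobenius iteration, and the central obstacle: producing arbitrary scaling factors in $\mathbb{R}_+$ without Condition~\ref{cond::conditions_on_both_N_and_mu}. However, your resolution of this obstacle is too vague to constitute a proof, and it misidentifies which piece of Mentemeier's work is actually needed.

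The paper does not use a ``symmetry'' encoded in the Laplace transform. It uses two concrete inputs. First, from $m(\alpha)=1$ and $m'(\alpha)<0$ alone (no Condition~\ref{cond::conditions_on_both_N_and_mu}), one finds $s\in(\alpha,1)$ with $m(s)<1$; a subadditivity estimate on $\mathbb{E}\big[\big(\sum_{|u|=n}\|G_u\|\big)^s\big]$ then produces a realization $b$ with $\|b\|$ arbitrarily small, and composing with a positive $h\in\Gamma$ (Condition~\ref{cond::condition_on_mu}) yields a single matrix $g>0$ with $r(g)<1$ and $\{gz:z\in\mathrm{supp}(Z)\}\subset\mathrm{supp}(Z)$. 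This is the contracting direction, and it also gives $0\in\mathrm{supp}(Z)$. Second, the expanding direction comes from Mentemeier's \emph{tail} result \cite[Theorem~6.1]{Ment16} (Proposition~\ref{prop::from_Ment16} here): the regular variation $\mathbb{P}[|Z|>sr]/\mathbb{P}[|Z|>r]\to s^{-\alpha}$ with angular limit $\nu^\alpha$ supported on $\Lambda$ implies that $\mathrm{supp}(Z)$ contains points $z_n$ with $|z_n|\asymp(1+\varepsilon)^n$ and direction close to a fixed $v_0>0$. Balancing $g^k z_{n(s,k)}$ with $r(g)<1$ against $(1+\varepsilon)^n\to\infty$ lets one approximate any $s\,v(g)$, hence $\mathbb{R}_+v(g)\subset\mathrm{supp}(Z)$. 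Only after this single ray is secured does the paper invoke the argument of Theorem~\ref{thm::main_theorem} Parts (1)--(2) to propagate to all $g_V$.

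Your sketch tries to run the Perron iteration directly on each $g_V$, but $r(g_V)$ may be anything, and you have not explained where either a contracting matrix or large-scale support points come from. Both are needed, and the mechanisms above are what supply them.
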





	\subsubsection{Absolute continuity}
	An interesting application of Theorem~\ref{thm::second_theorem} (and also of Theorems~\ref{thm::main_theorem} and \ref{thm::third_theorem}) is to prove the absolute continuity of the solution $Z$. Set 
	\begin{equation}
		N(t) = \# \{ i \geq 1 : A_i^Tt \ne 0 \} = \# \{ i  \geq 1: t \not \in \ker A_i^T \}, \quad t \in \mathbb{R}^d.
	\end{equation} 
	\begin{condition}
		The point process $(N, A_1, A_2, \cdots )$ satisfies  the following conditions: 
		\begin{itemize}\label{cond::absolute-continuity}
			\item A.s. $N(t) \geq 1$ for all $t \in \mathbb{R}^d \setminus \{ 0 \}$, with 
			\begin{equation}
				\mathbb{P}[N(t) = 1] < 1, \quad \forall t \in \mathbb{R}^d \setminus\{ 0 \}.
			\end{equation}
			
			\item 
			There exists $\varepsilon_0 > 0$ such that, 
			for some  $i(t) \in \mathbb{N^*}$  with
			$
			A^T_{i(t)} t \ne 0, 
			$ we have
			\begin{equation}\label{sup-t-S-d-1-E-A-it-T-t-epsilon-0}
				\sup_{t \in \mathbb{S}^{d-1}} \mathbb{E}\big[ |A_{i(t)}^Tt|^{-\varepsilon_0} \big] < +\infty, 
			\end{equation}
			
		\end{itemize}
	\end{condition}
	
		%
	
	
	\begin{theorem}[Decay rate of the characteristic function and absolute continuity]\label{thm::absolute-continuity}
		Assume Conditions~\ref{cond::conditions_on_N}, \ref{cond::conditions_on_alpha},~\ref{cond::absolute-continuity}. Let $Z$ be a solution of Equation~\eqref{equ::smoothing_transform} such that $\mathbb{P}[Z = 0] = 0$. Then there are constants  $a>0$  and $C>0$ such that for all $t \in \mathbb R^d$, 
		$$ |\mathbb E e^{i \langle t, Z\rangle } | \leq C |t|^{-a}.  $$
		If additionally there exist linearly independent vectors $\{ v_i \}_{1 \leq i \leq d} \subset \mathbb{R}_+^d \setminus \{ 0 \}$ satisfying 
		\begin{equation}\label{equ::support-contains-many-vectors}
			\mathbb{R}_+ v_i \subset \mathrm{supp}(Z) , \quad i = 1, \cdots, d, 
		\end{equation}
		then, the law of $Z$ is absolutely continuous with respect to the Lebesgue measure on $\mathbb{R}^d$.
	\end{theorem}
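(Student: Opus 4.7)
The strategy is to derive, from the fixed-point equation for $\phi(t) := \mathbb{E}e^{i\langle t, Z\rangle}$, the basic inequality $|\phi(t)| \leq \mathbb{E}\prod_{i=1}^{N}|\phi(A_{i}^{T}t)|$, iterate it along the Galton--Watson tree, and combine with Condition~\ref{cond::absolute-continuity} to extract polynomial decay of $\phi$; the absolute-continuity statement will then be obtained by a bootstrap pushing the decay exponent past $d$, yielding $\phi\in L^{1}(\mathbb{R}^d)$.

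First I would establish the pointwise estimate $|\phi(t)|<1$ for every $t\neq 0$. If $|\phi(t_{0})|=1$ for some $t_{0}\neq 0$, equality in the basic inequality forces $|\phi(A_{i}^{T}t_{0})|=1$ for every $i\leq N$ with $A_{i}^{T}t_{0}\neq 0$, together with almost sure alignment of the phases $\phi(A_{i}^{T}t_{0})$. Using the guarantee $\mathbb{P}[N(t_{0})\geq 2]>0$ together with the fact that \eqref{sup-t-S-d-1-E-A-it-T-t-epsilon-0} rules out atomic concentration of $A_{i(t_{0})}^{T}t_{0}$, this would propagate lattice concentration of $\langle t_{0},Z\rangle$ to lattice concentration of $\langle v,Z\rangle$ for a continuum of vectors $v$, which by Condition~\ref{cond::conditions_on_alpha} and an ergodic argument on the products $M_{u}$ is incompatible with $\mathbb{P}[Z=0]=0$. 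Once $|\phi|<1$ is in hand, iterating the inequality $n$ times,
\[
|\phi(t)| \leq \mathbb{E}\prod_{|u|=n}|\phi(M_{u}^{T}t)|, \qquad M_{u} := A_{u|1}\cdots A_{u},
\]
and selecting at each level an index $i(t)$ as in Condition~\ref{cond::absolute-continuity} so that, by Markov's inequality with exponent $\varepsilon_{0}$, the partial product $A_{u|k+1}^{T}M_{u|k}^{T}t$ lies with uniformly positive conditional probability in a compact annulus where $|\phi|\leq 1-\delta_{0}$, one obtains $|\phi(t)|\leq C|t|^{-a}$ for some constants $a,C>0$, in the spirit of the one-dimensional argument of~\cite{liu2001asymptotic}.

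For the absolute-continuity assertion, the support condition \eqref{equ::support-contains-many-vectors} already implies $|\phi(s)|<1$ for every $s\neq 0$, since linear independence of $\{v_{1},\dots,v_{d}\}$ forces $\langle s,v_{j}\rangle\neq 0$ for some $j$, excluding any lattice concentration of $\langle s,Z\rangle$. Substituting the polynomial bound $|\phi(s)|\leq C|s|^{-a}$ inside the fixed-point inequality and using that on the event $\{N(t)\geq 2\}$ at least two factors $C|A_{i}^{T}t|^{-a}$ appear, the harmonic-moment control from \eqref{sup-t-S-d-1-E-A-it-T-t-epsilon-0} yields an improved rate $|\phi(t)|\leq C'|t|^{-a'}$ with $a'>a$. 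Iterating this bootstrap finitely many times pushes the exponent above $d$, so $\phi \in L^{1}(\mathbb{R}^{d})$ and Fourier inversion produces a bounded continuous density.

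The hard part will be handling the non-invertibility of the $A_{i}$: the quantities $|A_{i}^{T}t|$ can vanish along $\ker A_{i}^{T}$, so one must carefully route the iteration through a distinguished index $i(t)$ at each step in order not to lose control of the harmonic moment. The two clauses of Condition~\ref{cond::absolute-continuity}---the uniform negative moment of $A_{i(t)}^{T}t$ and the requirement $\mathbb{P}[N(t)=1]<1$ ensuring at least two non-degenerate factors with positive probability---are precisely what is needed to keep the iteration improving at every step, both for the initial polynomial decay and for the final bootstrap to $L^{1}$.
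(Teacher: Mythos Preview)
Your approach to the polynomial decay part is broadly in line with the paper's: derive the functional inequality $|\phi(t)|\le\mathbb{E}\prod_{i}|\phi(A_i^Tt)|$, single out the index $i(t)$ from Condition~\ref{cond::absolute-continuity}, and iterate a Gronwall-type inequality (the paper's Lemma~\ref{lem::poly-decay-characteristic-func}). Two points, however, are underspecified. First, your argument for $|\phi(t)|<1$ without the support hypothesis is vague; the paper simply uses \eqref{equ::support-contains-many-vectors}, which immediately rules out lattice concentration. Second, before any Gronwall step the paper establishes $\lim_{|t|\to\infty}|\phi(t)|=0$ (Lemma~\ref{lem::Fourier-transform-decays-to-0}) via a separate compactness and weak-limit argument on the family $\{N_\delta(t)\}_{t,\delta}$; your sketch does not isolate this, and without it the ``compact annulus where $|\phi|\le 1-\delta_0$'' you invoke is not available.

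The serious gap is in the absolute-continuity step. Your bootstrap aims to upgrade $|\phi(t)|\le C|t|^{-a}$ to $|\phi(t)|\le C'|t|^{-a'}$ with $a'>a$ by exploiting two factors on $\{N(t)\ge 2\}$. But on the complementary event $\{N_\delta(t)\le 1\}$, which has uniformly positive probability, only the single factor $|\phi(A_{i(t)}^Tt)|$ is available, so the resulting inequality is of the form $|\phi(t)|\le C_1|t|^{-2a}+\mathbb{E}[|\phi(A_{i(t)}^Tt)|\mathbbm{1}_{\{N_\delta(t)\le 1\}}]$. To close this by the Gronwall lemma at exponent $2a$ you need $\sup_{t\in\mathbb{S}^{d-1}}\mathbb{E}[|A_{i(t)}^Tt|^{-2a}\mathbbm{1}_{\{N_\delta(t)\le 1\}}]<1$, hence a harmonic moment of order $2a$. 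Condition~\ref{cond::absolute-continuity} only supplies order $\varepsilon_0$, so the bootstrap stalls once the exponent reaches $\varepsilon_0$; if $\varepsilon_0\le d$ you never get $\phi\in L^1(\mathbb{R}^d)$. The paper circumvents this by a genuinely different idea: it proves (Lemma~\ref{lem::inf-N-t-can-be-large}, via an induction on the dimension of linear subspaces to handle the kernels $\ker A_i^T$) that $\inf_{t\neq 0}N(t,n)\to\infty$ a.s.\ as the generation $n\to\infty$. Conditioning on the event $F_{n,\delta}$ where at least $\lfloor d/a\rfloor+1$ descendants satisfy $|G_u^Tt|\ge\delta|t|$, the \emph{conditional} characteristic function is bounded by a product of $\lfloor d/a\rfloor+1$ factors each of size $O(|t|^{-a})$ with the \emph{same} small $a$, giving an integrable bound without ever needing harmonic moments beyond $\varepsilon_0$. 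This tree-level multiplication of many small-exponent factors, rather than a one-step doubling, is the missing mechanism in your proposal.
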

	
		Condition~\eqref{equ::support-contains-many-vectors}
		can be checked by using Theorem~\ref{thm::second_theorem} for the i.i.d. case, and Theorems~\ref{thm::main_theorem} and \ref{thm::third_theorem} for  non i.i.d. case. Thus, the absolute continuity can be established 
		assuming only conditions on $(N,A_1,A_2,\cdots)$. Combining Theorems~\ref{thm::main_theorem} and~\ref{thm::absolute-continuity}, we obtain the following: 
		{\color{black}
		
		
		\begin{corollary}\label{cor::absolute-continuity-2}
			Assume Conditions \ref{cond::conditions_on_N}, \ref{cond::condition_on_mu},  \ref{cond::conditions_on_both_N_and_mu}, \ref{cond::conditions_on_alpha}, \ref{cond::absolute-continuity}, and $\Lambda$ contains at least $d$ linearly independent vectors. Let $Z$ be a solution of Equation~\eqref{equ::smoothing_transform} such that $\mathbb{P}[Z = 0] = 0$. Then, the law of $Z$ is absolutely continuous with respect to the Lebesgue measure on $\mathbb{R}^d$.
		\end{corollary}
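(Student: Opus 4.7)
The plan is to derive the corollary as a direct combination of Part~1 of Theorem~\ref{thm::main_theorem} and Theorem~\ref{thm::absolute-continuity}. The content is essentially a verification of hypotheses: Theorem~\ref{thm::main_theorem} will supply the required ray structure inside $\mathrm{supp}(Z)$, and Theorem~\ref{thm::absolute-continuity} will then convert that geometric information, together with its general characteristic-function decay estimate, into absolute continuity. Because both input results have already been proved in the paper, I expect no serious obstacle.

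First I will invoke Part~1 of Theorem~\ref{thm::main_theorem}. Its hypotheses are Conditions~\ref{cond::conditions_on_N},~\ref{cond::condition_on_mu},~\ref{cond::conditions_on_both_N_and_mu} together with $\mathbb{P}[Z=0]=0$, all of which are assumed in the corollary. The remark immediately following that theorem records the consequence
\begin{equation*}
\mathrm{supp}(Z) \supset \{ sv : s \geq 0,\, v \in \Lambda \}.
\end{equation*}
This inclusion is the only geometric ingredient I need from the support analysis.

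Next I will use the standing hypothesis that $\Lambda$ contains $d$ linearly independent vectors $v_1,\dots,v_d$. Since $v(a) \in \mathbb{S}^{d-1}_{+}$ for every strictly positive $a \in \Gamma$ and $\mathbb{S}^{d-1}_{+}$ is closed in $\mathbb{R}^d$, the definition of $\Lambda$ as a closure gives $\Lambda \subset \mathbb{S}^{d-1}_{+} \subset \mathbb{R}^d_+ \setminus \{0\}$. The inclusion above then yields $\mathbb{R}_+ v_i \subset \mathrm{supp}(Z)$ for $i = 1,\dots,d$, which is precisely the condition~\eqref{equ::support-contains-many-vectors} demanded by the ``if additionally'' clause of Theorem~\ref{thm::absolute-continuity}.

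Finally, the remaining hypotheses of Theorem~\ref{thm::absolute-continuity}, namely Conditions~\ref{cond::conditions_on_N},~\ref{cond::conditions_on_alpha} and~\ref{cond::absolute-continuity}, are assumed in the corollary, and $\mathbb{P}[Z=0]=0$ also holds. Applying Theorem~\ref{thm::absolute-continuity} then delivers the absolute continuity of the law of $Z$ with respect to the Lebesgue measure on $\mathbb{R}^d$. The hardest step, if one can call it hard, is merely translating the algebraic hypothesis ``$\Lambda$ contains $d$ independent vectors'' into the analytic hypothesis~\eqref{equ::support-contains-many-vectors}, and this translation is immediate from the remark after Theorem~\ref{thm::main_theorem}.
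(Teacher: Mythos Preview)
Your proposal is correct and matches the paper's own argument: the corollary is stated in the paper as a direct combination of Theorems~\ref{thm::main_theorem} and~\ref{thm::absolute-continuity}, with no separate proof given. Your verification that the remark after Theorem~\ref{thm::main_theorem} supplies condition~\eqref{equ::support-contains-many-vectors} is exactly the intended reasoning.
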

		
		
	}
	
	\subsubsection{Harmonic moments}
	We finally derive the critical exponent for the existence of  harmonic moments of $Z$. We will use the Furstenberg-Kesten condition on $A_1$:
	\begin{condition}\label{cond::Furstenberg-Kesten}
		There exists a constant $c > 1$ such that
		\begin{equation}
			0 < \max_{1 \leq i,j\leq d} A_1(i, j) \leq c \min_{1\leq i,j \leq d} A_1(i, j), \quad \mbox{a.s.,}
		\end{equation}
		where {$A_1(i, j)$} is the $(i,j)$-th entry of $A_1$. 
	\end{condition}
	If $\mathbb{P}[N = 1] > 0$, then we denote by $\tilde{A}_1$ the random matrix with the law of $A_1$ conditioned on $\{  N = 1 \}$. Define
	\begin{equation}
		\tilde{I}_- := \{ s \leq 0 : \mathbb{E}[\| \tilde{A}_1 \|^s] < +\infty \},
	\end{equation}
	and let $(\tilde{A}_i)_{i \geq 2}$ be a sequence of i.i.d. copies of $\tilde{A}_1$. From~\cite[Proposition 2.2]{xiao2022edgeworth}, under Condition~\ref{cond::Furstenberg-Kesten}, the following limit exists for $s \in \tilde{I}_-$: 
	\begin{equation}
		\tilde{\kappa}(s) := \lim_{n \to \infty} \mathbb{E}[\| \tilde{A}_n \cdots \tilde{A}_1 \|^s ]^{\frac{1}{n}}  \in (0, \infty).
	\end{equation}
	
	\begin{theorem}[Harmonic moments]\label{thm::critical-exponent-negative-moment}
		Assume Conditions~\ref{cond::conditions_on_N}, \ref{cond::Furstenberg-Kesten}. Let $Z$ be a solution of~\eqref{equ::smoothing_transform} such that $\mathbb{P}[Z = 0] = 0$. Let $a > 0$ be such that $\mathbb{E}[\| A_1\|^{-a}] < +\infty$. Consider the three statements:
		\begin{equation}
			\mbox{(a) } \mathbb{E}[| Z |^{-a}] < \infty, \quad
			\mbox{(b) } \tilde{\kappa}(-a) \mathbb{P}[N = 1] < 1, \quad 
			\mbox{(c) } \mathbb{E}[| Z |^{-b}] < \infty, \quad \forall b \in (0, a).
		\end{equation}
		Then, we have the following conclusions:
		\begin{enumerate}
			\item If $\mathbb{P}[N = 1] = 0$, then (c) holds. 
			\item  If $\mathbb{P}[N = 1] > 0$, then $\mbox{(a)} \Rightarrow \mbox{(b)} \Rightarrow \mbox{(c)}$. If additionally  $\mathbb{E}[\| A_1\|^{-a-\varepsilon}] < +\infty$ for some $\varepsilon >0$, then  $\mbox{(a)} \Leftrightarrow \mbox{(b)}$. 
		\end{enumerate}
	\end{theorem}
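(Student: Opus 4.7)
The strategy is the classical harmonic moment approach via the Laplace transform $\psi(t) := \mathbb{E}[e^{-t|Z|}]$, $t \geq 0$, combined with a spine decomposition along successive generations having $N = 1$. Since all $A_i$ and $Z_i$ are componentwise nonnegative, $|Z| = \sum_{i=1}^N |A_i Z_i|$, and the smoothing equation yields the functional identity
\begin{equation*}
\psi(t) = \mathbb{E}\Bigl[\prod_{i=1}^N F_t(A_i)\Bigr], \qquad F_t(a) := \mathbb{E}[e^{-t|a Z|}].
\end{equation*}
Harmonic moments are recovered via the Mellin-type identity $\mathbb{E}[|Z|^{-b}] = \Gamma(b)^{-1}\int_0^\infty t^{b-1}\psi(t)\,dt$ for $b > 0$; since $\psi$ is nonincreasing, finiteness of this integral is essentially equivalent to $\psi(t) = O(t^{-b})$ at infinity.

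First I would prove an upper bound for $\psi$, delivering Part~1 and the implication (b)$\Rightarrow$(c). Split the functional identity according to $\{N = 1\}$ and $\{N \geq 2\}$:
\begin{equation*}
\psi(t) = \mathbb{P}[N = 1]\,\mathbb{E}[F_t(\tilde A_1)] + \mathbb{E}\bigl[\mathbf{1}_{N \geq 2}\,{\textstyle\prod_{i=1}^N} F_t(A_i)\bigr].
\end{equation*}
The Furstenberg--Kesten Condition~\ref{cond::Furstenberg-Kesten} gives $|a Z| \geq c\,\|a\|\,|Z|$ a.s.\ for some $c > 0$, so $F_t(a) \leq \psi(c\,t\,\|a\|)$. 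Iterating the split over $n$ generations along the single-child spine produces
\begin{equation*}
\psi(t) \leq \mathbb{P}[N = 1]^n\,\mathbb{E}\bigl[\psi\bigl(c^n t\,\|\tilde A_n \cdots \tilde A_1\|\bigr)\bigr] + R_n(t),
\end{equation*}
where $R_n(t)$ gathers multi-child contributions, each involving a product of at least two factors $F_t(\cdot)$ and hence decaying at a strictly faster polynomial rate than the crude bound for $\psi$. When $\mathbb{P}[N = 1] = 0$ (Part~1) only $R_n$ survives, and a bootstrap on the decay exponent yields $\psi(t) = O(t^{-b})$ for every admissible $b > 0$, whence (c). When $\mathbb{P}[N = 1] > 0$, the submultiplicative estimate $\mathbb{E}[\|\tilde A_n \cdots \tilde A_1\|^{-b}] \leq C\,\tilde\kappa(-b)^n$ (valid under Condition~\ref{cond::Furstenberg-Kesten}) makes the first term summable in $n$ as soon as $\mathbb{P}[N = 1]\,\tilde\kappa(-b) < 1$; by continuity of $\tilde\kappa$, (b) provides this for every $b < a$, delivering (c).

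Next I would prove the lower bound (a)$\Rightarrow$(b). Condition on the spine event $E_n = \{N_\emptyset = N_1 = \cdots = N_{1^{n-1}} = 1\}$, which has probability $\mathbb{P}[N = 1]^n$ and on which $Z = \tilde A_\emptyset \tilde A_1 \cdots \tilde A_{1^{n-1}} Z_{1^n}$ with i.i.d.\ factors. Using comparability of matrix norms from Condition~\ref{cond::Furstenberg-Kesten}, $|M Z'| \leq C\,\|M\|\,|Z'|$, so
\begin{equation*}
\mathbb{E}[|Z|^{-a}] \;\geq\; c\,\mathbb{P}[N = 1]^n\,\mathbb{E}\bigl[\|\tilde A_\emptyset \cdots \tilde A_{1^{n-1}}\|^{-a}\bigr]\,\mathbb{E}[|Z|^{-a}].
\end{equation*}
Assuming (a) and the positivity of $\mathbb{E}[|Z|^{-a}]$ (a short separate argument, using $\mathbb{P}[Z = 0] = 0$), taking $n$-th roots and $n \to \infty$ forces $\mathbb{P}[N = 1]\,\tilde\kappa(-a) \leq 1$; the strict inequality in (b) then follows by incorporating a strictly positive multi-child contribution at the root, permitted by Condition~\ref{cond::conditions_on_N}.

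The principal obstacle is the sharp converse (b)$\Rightarrow$(a) under the additional assumption $\mathbb{E}[\|A_1\|^{-a-\varepsilon}] < \infty$: one must upgrade polynomial decay of $\psi$ of every order $b < a$ to integrability of $t^{a-1}\psi(t)$ at the critical exponent $b = a$. Routine iteration only closes the bootstrap for $b < a$; crossing the boundary seems to require either a uniform bound along the spine exploiting the Furstenberg--Kesten structure, or a Perron-type eigenfunction argument for the transfer operator $T_{-a} f(x) = \mathbb{E}[\|\tilde A_1 x\|^{-a}\, f(\tilde A_1 x / |\tilde A_1 x|)]$ on continuous functions on $\mathbb{S}_+^{d-1}$, whose spectral radius equals $\tilde\kappa(-a)$. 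The extra $\varepsilon$ of integrability is precisely what ensures that $T_{-a}$ has enough regularity to admit an eigenfunction bounded above and away from zero on the relevant directional support. Constructing this eigenfunction, and using it to push the bootstrap across the critical exponent, is where the bulk of the technical work will lie.
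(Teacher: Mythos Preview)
Your overall plan (Laplace transform plus a spine decomposition along $\{N=1\}$) is in the right spirit, but there is one substantial misconception and a few smaller issues.

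\textbf{The main point: (b)$\Rightarrow$(a) is almost free.} You flag the implication (b)$\Rightarrow$(a) under the extra hypothesis $\mathbb{E}[\|A_1\|^{-a-\varepsilon}]<\infty$ as ``the principal obstacle'' and propose a transfer-operator eigenfunction argument to cross the critical exponent. In fact this step is essentially trivial once (b)$\Rightarrow$(c) has been established: the extra $\varepsilon$ only serves to make $\tilde\kappa$ defined and continuous on $[-a-\varepsilon,0]$, so by continuity one can pick $a'\in(a,a+\varepsilon)$ with $\tilde\kappa(-a')\mathbb{P}[N=1]<1$, and then apply (b)$\Rightarrow$(c) with $a'$ in place of $a$ to get $\mathbb{E}[|Z|^{-b}]<\infty$ for every $b\in(0,a')$, in particular for $b=a$. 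No bootstrap across a critical exponent is needed. The hard work is entirely in (b)$\Rightarrow$(c).

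\textbf{Comparison for (b)$\Rightarrow$(c) and (a)$\Rightarrow$(b).} Your route via the \emph{scalar} Laplace transform $\psi(t)=\mathbb{E}[e^{-t|Z|}]$ and the norm bound $|aZ|\ge c\|a\|\,|Z|$ from the Furstenberg--Kesten condition is different from the paper's. The paper works instead with the \emph{vector} Laplace transform $\psi(t)=\mathbb{E}[e^{-\langle t,Z\rangle}]$, $t\in\mathbb{R}^d_+$, and uses the transfer-operator eigenmeasure $\nu_{-a}$ on $\mathbb{S}_+^{d-1}$ (from the spectral theory of \cite{xiao2022edgeworth}) directly: with $V\sim\nu_{-a}$, the identity $P_{-a}\nu_{-a}=\tilde\kappa(-a)\nu_{-a}$ turns the iteration into a clean geometric bound with ratio $\mathbb{P}[N=1]\tilde\kappa(-a)+\eta$. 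This avoids the exponent bootstrap you sketch for the multi-child remainder $R_n$. Similarly, for (a)$\Rightarrow$(b) the paper uses the eigenmeasure $\nu^*_{-a}$ to get the strict inequality in one line; your spine argument yields $\mathbb{P}[N=1]\tilde\kappa(-a)\le 1$ cleanly, but the upgrade to strict inequality is not quite as immediate as you suggest and would need a little more care.

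\textbf{Minor correction.} Your claimed inequality $\mathbb{E}[\|\tilde A_n\cdots\tilde A_1\|^{-b}]\le C\,\tilde\kappa(-b)^n$ is true under Condition~\ref{cond::Furstenberg-Kesten}, but not by submultiplicativity: for $b>0$, submultiplicativity of the norm gives the \emph{opposite} inequality $\|\tilde A_n\cdots\tilde A_1\|^{-b}\ge\prod_i\|\tilde A_i\|^{-b}$. The upper bound comes from the spectral gap theory for the transfer operator at negative parameters.
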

	Therefore, in case 2, the critical value for the existence of harmonic moments is determined by the equation  $ \tilde{\kappa}(-a) \mathbb{P}[N = 1]  = 1$, under a mild moment condition. 
	
	{ \subsection{Examples}
		We provide three examples to illustrate the applications of the above theorems, in the case where $d=2$.  
		\begin{example}[Support]\label{exam::1}
			Define $v_1 = u = (1, 1)^T$, $v_2 = (1, 2)^T$, $a_1 = \frac{v_1\otimes u}{5}$, $a_2 = \frac{v_2\otimes u}{5}$, where $v\otimes u := v u^T$. Set $N = 2$, $\mu = \frac{1}{2} \delta_{a_1} + \frac{1}{2} \delta_{a_2}$, and let $A_1, A_2$ be two i.i.d. matrices with law $\mu$. 
			
			Since $\mathbb{E}[N]\kappa(1) = 2r(\int a\mu(da)) = r(a_1 + a_2) = 1$,
			we can check that Conditions~\ref{cond::conditions_on_N}, \ref{cond::condition_on_mu}, \ref{cond::conditions_on_both_N_and_mu}, \ref{cond::conditions_on_alpha}, \ref{cond::conditions_on_independence} hold, with $U_1 = U_2 = \mathbb{N}^* $, $l_1 = 2a_1$, $l_2 = 2a_2$, $\alpha = 1$. If $Z$ is a solution of~\eqref{equ::smoothing_transform} such that $\mathbb{P}[Z = 0] = 0$, then, by Theorem~\ref{thm::second_theorem}, we derive that $\mathrm{supp}(Z) = \{ s_1v_1 + s_2v_2 : s_1, s_2 \geq 0 \}$,  the convex cone generated by $v_1, v_2$. 
		\end{example}
		
		\begin{example}[Absolute continuity]\label{exam::2}
			Define $v_1, v_2, u, a_1, a_2$ as in Example~\ref{exam::1}, and let $X$ be a real random variable such that $\mathbb{P}[X = \frac{1}{4}] = \mathbb{P}[X = \frac{3}{4}] = \frac{1}{2}$. 
			Set $N = 3$, $A_1 = Xa_1$, $A_2 = Xa_2$, $A_3 = X(a_1 + a_2)$.
			
			In this case, it can be checked that Conditions~\ref{cond::conditions_on_N}, \ref{cond::condition_on_mu}, \ref{cond::conditions_on_both_N_and_mu}, \ref{cond::conditions_on_alpha} hold, with $U_1 = U_2 = \mathbb{N}^* $, $l_1 = \frac{a_1 +a_1 }{2}$, $l_2 = \frac{3(a_1+a_2)}{2}$, $\alpha = 1$. 
			
			Denote $v_3 = v_1 + v_2$. Since $N(t) = \#\{ i = 1, 2, 3 : \langle t, v_i\rangle \neq 0 \} \geq 2$ for each $t \in \mathbb{R}^d \setminus \{ 0\}$, Condition~\ref{cond::absolute-continuity} holds. Notice that $\Lambda = \{ v_i/|v_i| : i = 1, 2, 3 \}$ contains three linearly independent vectors. 
			
			Therefore, from Corollary~\ref{cor::absolute-continuity-2}, we derive that, 
			if $Z$ is a solution of~\eqref{equ::smoothing_transform} such that $\mathbb{P}[Z = 0] = 0$, then  the law of $Z$ is absolutely continuous. 
		\end{example}
		
		\begin{example}[Harmonic moments]
			Define $v_1, v_2, u, a_1, a_2$ as in Example~\ref{exam::1}. Let $\mathbb{P}[N = 1] = \mathbb{P}[N = 2] = \frac{1}{2}$. On $\{ N = 1 \}$, define $A_1 = Xa_1 + (1 - X)a_2$, where $X$ is a real random variable independent of $N$ with $\mathbb{P}[X = 0] = \mathbb{P}[X = 1] = \frac{1}{2}$; on $\{ N = 2 \}$, define $(A_1, A_2) = (a_1, a_2)$. 
			
			Notice that Conditions~\ref{cond::conditions_on_N} and \ref{cond::Furstenberg-Kesten} hold. 
			It can be  checked that $\tilde{\kappa}(s) = {(2^s + 3^s)} / {(2 \cdot 5^s)}$ for all $s \in \mathbb{R}$. Let $a_0 >0$ be the solution of equation $\tilde{\kappa}(-a)\mathbb{P}[N = 1] = 1$, that is, $a_0$ satisfies ${(2^{-a_0} + 3^{-a_0})} / {(2 \cdot 5^{-a_0})} = 2$. Therefore by Theorem~\ref{thm::critical-exponent-negative-moment},
			if $Z$ is a solution of~\eqref{equ::smoothing_transform} such that $\mathbb{P}[Z = 0] = 0$, then $\mathbb{E}[|Z|^{-a}] < +\infty$ for $a \in (0, a_0)$, and $\mathbb{E}[|Z|^{-a}] = +\infty$ for $a \in [a_0, \infty)$.
		\end{example}
	}

	\subsection{The structure of the paper}
	In Section~\ref{sec::section_2}, 
	we show that the usual condition of non-arithmeticity implies Condition~\ref{cond::conditions_on_both_N_and_mu}:  see Proposition~\ref{prop::non-arithmeticity-implies-others}. In Section~\ref{section::support}, we  first prove Theorem \ref{thm::main_theorem} and then use it to prove Theorems \ref{thm::second_theorem} and \ref{thm::third_theorem}.
	In Section~\ref{section::absolute_continuity}, we establish the absolute continuity by following the argument in~\citet{liu2001asymptotic} in the one-dimensional case, while overcoming specific challenges that arise in the multidimensional case.
	In Section~\ref{section::negative-moments}, we find the critical value of existence of harmonic moments, 
	using the spectral gap theory for negative parameter 
	developed in~\citet{xiao2022edgeworth}.  


	\section{Non-arithmeticity and Condition~\ref{cond::conditions_on_both_N_and_mu}} \label{sec::section_2}
	
	\subsection{Support under transformation}
	In this subsection, we do not assume any condition on the point process $(N, A_1, A_2, \cdots)$. In order to prove Theorem~\ref{thm::main_theorem}, we need to iterate~\eqref{equ::smoothing_transform} and obtain some  inclusion relations of the support of solution. To this end, we will make use of the following general result on the support (see \eqref{def-support} for the definition) of a measure on a Polish space. 
	\begin{lemma}\label{lem::support_breaking_general}
		For $i \geq 1$, let   $ \Omega_i$ be Polish space equipped with the Borel $\sigma$-field, and  $X_i$ be $\Omega_i$-valued  random variable.  For $n \geq 1$,  let $f : \prod_{i = 1}^{n} \Omega_i \to \Omega$ be a continuous mapping to some Polish space $\Omega$ equipped with the Borel $\sigma$-field. Then: 
		\begin{enumerate} 
			\item \label{item::support_breaking_without_independence} We have
			\begin{equation}\label{equ::support_breaking_general_1}
				\mathrm{supp}(f(X_1, \cdots, X_n)) \subset \overline{f(\mathrm{supp}(X_1),\cdots, \mathrm{supp}(X_n))}.
			\end{equation}
			In particular, if $O$ is an open set in $\Omega$ such that
			\begin{equation}\label{equ::ad_hoc_6}
				\mathbb{P}[f(X_1, \cdots, X_n) \in O] > 0,
			\end{equation}
			then, there exists an $n$-tuple   $(x_1, \cdots, x_n) \in \prod_{i = 1}^n \mathrm{supp}(X_i)$ such that $f(x_1, \cdots, x_n) \in O$.
			\item \label{item::support_breaking_with_independence}
			If $X_1, \cdots, X_n$ are independent, then, 
			\begin{equation}\label{equ::support_breaking_general_2}
				\mathrm{supp}(f(X_1, \cdots, X_n)) = \overline{f(\mathrm{supp}(X_1),\cdots, \mathrm{supp}(X_n))}.
			\end{equation}
			\item \label{item::support_breaking_independent_product}  If $(X_i)_{i \geq 1}$ is a sequence of independent random variables, 
			then 
			\begin{align} \label{supp-Z1Z2}
				\mathrm{supp}(X_1, X_2, \cdots ) = \prod_{i\geq 1}\mathrm{supp}(X_i).
			\end{align}
		\end{enumerate}
	\end{lemma}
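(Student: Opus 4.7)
The plan is to handle the three parts in order, with Part 1 proved without independence and Parts 2--3 using independence to produce the reverse inclusion.

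For Part \ref{item::support_breaking_without_independence}, the key observation is that $\mathbb{P}[X_i \in \mathrm{supp}(X_i)] = 1$ for each $i$ (a standard consequence of second countability of Polish spaces), so by a union bound---no independence needed---the random point $(X_1,\dots,X_n)$ a.s.\ lies in the product $\prod_{i=1}^n \mathrm{supp}(X_i)$. Composing with the continuous map $f$ gives $f(X_1,\dots,X_n)\in f(\prod_{i=1}^n\mathrm{supp}(X_i))$ a.s., hence the law of $f(X_1,\dots,X_n)$ is concentrated on the closure on the right, which yields \eqref{equ::support_breaking_general_1}. For the ``in particular'' statement, under \eqref{equ::ad_hoc_6} the event $\{f(X_1,\dots,X_n)\in O\}$ has positive probability and intersects the a.s.\ event $\{(X_1,\dots,X_n)\in\prod_{i=1}^n\mathrm{supp}(X_i)\}$, so any sample point in the intersection provides the desired tuple.

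For Part \ref{item::support_breaking_with_independence}, after Part \ref{item::support_breaking_without_independence} it remains to show the reverse inclusion $\overline{f(\prod_{i=1}^n\mathrm{supp}(X_i))}\subset\mathrm{supp}(f(X_1,\dots,X_n))$. Since the support is closed, it suffices to show that every $y=f(x_1,\dots,x_n)$ with $x_i\in\mathrm{supp}(X_i)$ belongs to $\mathrm{supp}(f(X_1,\dots,X_n))$. Given an open neighborhood $O$ of $y$, continuity of $f$ together with the basis of product-open sets furnishes open sets $O_i\ni x_i$ with $O_1\times\cdots\times O_n\subset f^{-1}(O)$. By independence,
\begin{equation}
\mathbb{P}[f(X_1,\dots,X_n)\in O]\;\geq\;\prod_{i=1}^n\mathbb{P}[X_i\in O_i]\;>\;0,
\end{equation}
since each $x_i\in\mathrm{supp}(X_i)$ forces $\mathbb{P}[X_i\in O_i]>0$. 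Hence $y$ lies in the support.

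For Part \ref{item::support_breaking_independent_product}, the same cylindrical-neighborhood idea works in the countable product $\prod_{i\geq 1}\Omega_i$ equipped with the product topology (still Polish). Any point $(x_1,x_2,\dots)\in\prod_i\mathrm{supp}(X_i)$ has a neighborhood basis of cylinder sets $O_1\times\cdots\times O_n\times\prod_{i>n}\Omega_i$ with $x_i\in O_i$; by independence of the finite family $X_1,\dots,X_n$ the probability of this cylinder is $\prod_{i\leq n}\mathbb{P}[X_i\in O_i]>0$, so the point lies in $\mathrm{supp}(X_1,X_2,\dots)$. Conversely, if some coordinate $x_i\notin\mathrm{supp}(X_i)$, a cylinder based on a neighborhood of $x_i$ of $X_i$-measure zero has probability zero, so the point is excluded; closedness of $\prod_i\mathrm{supp}(X_i)$ in the product topology then yields \eqref{supp-Z1Z2}.

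No step is truly an obstacle; the only subtlety is to be disciplined about invoking independence only where it is genuinely needed (Parts \ref{item::support_breaking_with_independence}--\ref{item::support_breaking_independent_product}) and to use the product-topology basis when passing from the hypothesis $x_i\in\mathrm{supp}(X_i)$ and continuity of $f$ to a rectangular event of positive measure.
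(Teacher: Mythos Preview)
Your proof is correct. Parts~\ref{item::support_breaking_with_independence} and~\ref{item::support_breaking_independent_product} follow essentially the same line as the paper: continuity produces a product-open subset of $f^{-1}(O)$ around the prescribed tuple, and independence factorises the probability. Your Part~\ref{item::support_breaking_without_independence}, however, uses a genuinely different and cleaner argument. The paper starts from a point $z$ in the support of $f(X_1,\dots,X_n)$, decomposes $f^{-1}(B(z,\varepsilon))$ as a countable union of open boxes via second countability, picks a box of positive probability, and then argues (again via second countability) that each factor must meet $\mathrm{supp}(X_i)$, producing an approximating tuple. You instead invoke once the standard fact that $\mathbb{P}[X_i\in\mathrm{supp}(X_i)]=1$ on a Polish space, take a finite union bound to get $(X_1,\dots,X_n)\in\prod_i\mathrm{supp}(X_i)$ a.s., and conclude that the law of $f(X_1,\dots,X_n)$ is carried by the closed set $\overline{f(\prod_i\mathrm{supp}(X_i))}$. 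Your route is shorter and avoids the box decomposition entirely; the paper's route is more constructive in that it explicitly exhibits, for each $\varepsilon$, a tuple $(x_\varepsilon,y_\varepsilon)$ with $f(x_\varepsilon,y_\varepsilon)\in B(z,\varepsilon)$, but this extra information is not needed for the lemma as stated.
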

	The proof of the lemma is postponed to the Appendix (Section \ref{sec::appendix}).


	We come to derive some inclusion relations  on $\mathrm{supp}(Z)$ by using Lemma~\ref{lem::support_breaking_general}. Recall that $\mathbb{U} = \cup_{n \geq 0}(\mathbb{N}^*)^n$. Using Lemma~\ref{lem::support_breaking_general} and the independence between $(N,(A_i)_{i \geq 1})$ and $(Z_i)_{i \geq 1}$, we deduce from~\eqref{equ::smoothing_transform} that
	\begin{align}\label{suppZ-suppi=1NAiZi-supset}
		\mathrm{supp}(Z) & = \mathrm{supp}\big( \sum_{i = 1}^N A_iZ_i \big) \notag \\
		& \supset \bigg\{ \sum_{i = 1}^n a_iz_i : (n,(a_i)_{i \geq 1}) \in \mathrm{supp}(N,(A_i)_{i \geq 1}), (z_i)_{i \geq 1} \in \mathrm{supp}((Z_i)_{i \geq 1}) \bigg\} \notag \\
		& = \bigg\{ \sum_{i = 1}^n a_iz_i : (n,(a_i)_{i \geq 1}) \in \mathrm{supp}(N,(A_i)_{i \geq 1}), z_i \in \mathrm{supp}(Z)  \bigg\},
	\end{align}
	where we use Part 2 of Lemma~\ref{lem::support_breaking_general}  for the inclusion, and Part 3 of Lemma~\ref{lem::support_breaking_general} for the last equality. 
	
	For any cover set $U \subset \mathbb{U}$ with finite level, we have
	\begin{equation}\label{equ::support_breaking_cover_set}
		\mathrm{supp}(Z) \supset \bigg\{ \sum_{u \in U} {  g_u}z_u : (a_{ui})_{i \geq 1} \in \mathrm{supp}((A_i)_{i \geq 1}), z_u \in \mathrm{supp}(Z), \forall u \in \mathbb{U} \bigg\},
	\end{equation}
	where ${  g_u} :=   a_{u|1} \cdots   a_{u| |u|}$ if $u = u_1\cdots u_n$. Indeed,  this can be proved  by induction. Note that for $U = \{ \emptyset \}$, the right hand side of~\eqref{equ::support_breaking_cover_set} is $\mathrm{supp}(Z)$, hence~\eqref{equ::support_breaking_cover_set} holds for any cover set  of level $0$. Suppose that~\eqref{equ::support_breaking_cover_set} holds for any cover set $U$ with level $n \geq 0$, that is, the maximal length of elements in $U$ is $n$. 
	For any cover set $U$ with level $n+1$, 
	let $V$ be the set of parents of the elements of $U$ with  length  $n+1$.   Since $U$ is a cover set of level  $n+1$, we have  $V \cap U = \emptyset$ and 
	
	\begin{equation}
		\{vi : v \in V, i \in \mathbb{N}^* \} = \{ u \in U : |u| = n+1 \}.
	\end{equation}
	Given $(a_{ui})_{i \geq 1} \in \mathrm{supp}((A_i)_{i\geq1})$ and $z_u\in \mathrm{supp}(Z)$ for each $u \in \mathbb{U}$, we have
	\begin{equation}\label{sum-u-in-U-guzu}
		\sum_{u \in U} {  g_u}z_u= \sum_{\substack{u \in U \\ |u| \leq n}} {  g_u}z_u + \sum_{\substack{u \in U \\ |u|=n+1}} {  g_u}z_u = \sum_{\substack{u \in U \\ |u| \leq n}} {  g_u}z_u + \sum_{v \in V} {  g_v} \sum_{i \geq 1} a_{vi}z_{vi}.
	\end{equation}
	Note that using~\eqref{suppZ-suppi=1NAiZi-supset}, we have $\sum_{i \geq 1} a_{vi}z_{vi} \in \mathrm{supp}(Z)$. Let $\tilde{z}_u = \sum_{i \geq 1} a_{ui}z_{ui}$ if $u \in V$, and let $\tilde{z}_u = z_u$ if $u \not \in V$. Then, we have $\tilde{z}_u \in \mathrm{supp}(Z)$ for each $u \in \mathbb{U}$. Define \mbox{$U_1 = \{ u \in U : |u| \leq n \}\cup V$}. Then, we obtain
	\begin{equation} \label{eq-sumU-sumU1}
		\sum_{u \in U} {  g_u}z_u = \sum_{u \in U_1}{  g_u} \tilde{z}_u. 
	\end{equation}
	Since $U_1$ is a cover set of $\mathbb{U}$ with level $n$, by the induction hypothesis   \eqref{equ::support_breaking_cover_set} holds for $U_1$. So from  \eqref{eq-sumU-sumU1}
	we derive that $\sum_{u \in U} {  g_u}z_u \in \mathrm{supp}(Z)$. 
	So we have proved that \eqref{equ::support_breaking_cover_set} remains true for any cover set $U$ with level $n+1$. Therefore, by induction,  \eqref{equ::support_breaking_cover_set} holds for any cover set $U$ with any finite level $n \geq 0$. 
	
	We will make use of the following particular case of~\eqref{equ::support_breaking_cover_set}. For any cover set $U \subset \mathbb{U}$ with finite level and any $(a_{u1},a_{u2},\cdots) \in \mathrm{supp}(A_1,A_2\cdots)$ for each $u \in \mathbb{U}$, we denote
	\begin{equation}
		l := \sum_{u  \in U} a_{u|1}\cdots a_{u | |u|}.
	\end{equation}
	Then, we derive from~\eqref{equ::support_breaking_cover_set} that
	\begin{equation}\label{equ::support_breaking_easy_form}
		\mathrm{supp}(Z) \supset \{ l z :  z \in \mathrm{supp}(Z) \}.
	\end{equation}
	
	\subsection{Non-arithmeticity implies Condition~\ref{cond::conditions_on_both_N_and_mu}}
	
	\begin{condition}[Non-arithmeticity]\label{non-arithmetic-cond} 
		We say that a measure $\nu$ on the set of nonegative matrices is non-arithmetic, if for all $t > 0$, $\theta \in [0, 2\pi)$ and function $\vartheta : \mathbb{S}_+^{d-1} \to \mathbb{R}$, there exist $a \in \Gamma(\nu)$ and $x \in \Lambda(\Gamma(\nu))$ such that
		\begin{equation}
			\exp \big( it\log|ax| -i\theta+i(\vartheta(a\cdot x) - \vartheta(x)) \big) \ne 1,
		\end{equation}
		where $\Gamma(\nu) = [\mathrm{supp}(\nu)]$ is the multiplicative semigroup generated by $\mathrm{supp}(\nu)$, and
		$\Lambda(\Gamma(\nu)) = \overline{\{v(a) : a \in \Gamma(\nu) , a > 0\} }$.
	\end{condition} 
	
	For any nonnegative matrix $a$, define
	\begin{equation}
		N(a) = \mathrm{max} \{ \|a\|, \iota(a)^{-1} \},
	\end{equation}
	The following proposition shows that when $\alpha = 1$,  Condition~\ref{cond::conditions_on_both_N_and_mu} can be deduced from the non-arithmeticity. 
	
	\begin{proposition}\label{prop::non-arithmeticity-implies-others}
		Condition~\ref{cond::conditions_on_both_N_and_mu} holds in each of the following cases: 
		
		\begin{enumerate}
			\item Case 1:  Conditions~\ref{cond::conditions_on_N} and \ref{cond::condition_on_mu} hold, \ref{cond::conditions_on_alpha} holds with $\alpha = 1$,   and  \ref{cond::conditions_on_independence} holds  with 
			$\mu$   non-arithmetic and  
			\begin{equation} \label{1st-moment-cond}
				\int \log_+N(a)d\mu(a) < +\infty.
			\end{equation}
			\item Case 2: Conditions~\ref{cond::conditions_on_N} and \ref{cond::condition_on_mu} hold, \ref{cond::conditions_on_alpha} holds with $\alpha = 1$, and 
			{  the law $\mu_Y$ of the random variable $Y := \sum_{i = 1}^N A_i$
				is  non-arithmetic }and   
			\begin{equation}
				\int \log_+N(a)d\mu_Y(a) < +\infty.
			\end{equation}
			
		\end{enumerate} 
	\end{proposition}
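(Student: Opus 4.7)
The plan, in each case, is to construct cover sets $U_1, U_2$ of finite level together with realizations giving strictly positive matrices $l_1, l_2$ with $r(l_1) < 1 < r(l_2)$. I would split the argument into a combinatorial reduction and an analytic spectral step.

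For the combinatorial reduction, I would first show that any product $y_1 \cdots y_n$ with $y_j \in \mathrm{supp}(\mu_Y)$ can be realized as such a cover-set sum. For each $j$, pick a realization $(a_1^{(j)}, \dots, a_{k_j}^{(j)}, 0, \dots) \in \mathrm{supp}(A_1, A_2, \dots)$ with $y_j = \sum_{i=1}^{k_j} a_i^{(j)}$; take $U = (\mathbb{N}^*)^n$; and at every node of level $k < n$ assign the realization indexed by $k+1$. Expanding,
\[
\sum_{u \in (\mathbb{N}^*)^n} a_{u|1} \cdots a_{u|n} \;=\; \prod_{j=1}^n \Big(\sum_{i=1}^{k_j} a_i^{(j)}\Big) \;=\; y_1 \cdots y_n.
\]
Thus it suffices to produce strictly positive $l_1, l_2$ in the multiplicative semigroup $\Gamma(\mu_Y)$ generated by $\mathrm{supp}(\mu_Y)$, with $r$-values on opposite sides of $1$. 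In Case~1, the iid structure gives $\mathrm{supp}(\mu_Y) = \{\sum_{i=1}^k a_i : k \in \mathrm{supp}(N),\, a_i \in \mathrm{supp}(\mu)\}$, and the analogous argument runs on $\Gamma(\mu)$ by exploiting the monotonicity $r(\sum_i M_i) \ge r(M_j)$ for nonnegative $M_i$, which converts products in $\Gamma(\mu)$ to products in $\Gamma(\mu_Y)$ (for the $r>1$ side) and uses Lyapunov-exponent concentration (for the $r<1$ side).

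For the spectral step, the identity $\kappa(1) = r(\mathbb{E} M_1)$ from~\cite[Lemma 4.14]{BurDaGuiMent14} combined with $m(1) = 1$ gives $r(\mathbb{E} Y) = \mathbb{E}[N]\kappa(1) = 1$. Hence the top Lyapunov exponent $\lambda_Y := \lim_n \tfrac{1}{n}\log r(Y_1\cdots Y_n)$ for iid copies of $Y$ satisfies $\lambda_Y \le 0$. The non-arithmeticity hypothesis, together with $\int \log_+ N(a)\,d\mu_Y(a) < \infty$ (resp.\ $d\mu(a)$ in Case~1), then allows me to invoke the spectral-gap/renewal theory for products of nonnegative random matrices (Guivarc'h--Le~Page; Buraczewski--Damek--Guivarc'h--Mentemeier) and conclude that the closed additive subgroup of $\mathbb{R}$ generated by $\{\log r(y_1\cdots y_n) : n \ge 1,\, y_j \in \mathrm{supp}(\mu_Y)\}$ is the full line, with fluctuations around $\lambda_Y \le 0$ producing values on both sides of $0$. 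This yields $l \in \Gamma(\mu_Y)$ with $r(l)$ straddling $1$. Strict positivity is secured by multiplying by a fixed $p_0 \in \Gamma$ with $p_0 > 0$, supplied by Condition~\ref{cond::condition_on_mu}; allowability guarantees $p_0 l > 0$, and the bounded spectral effect of inserting $p_0$ is absorbed into the density argument.

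The main obstacle is the spectral step, specifically the transfer from Condition~\ref{non-arithmetic-cond} (phrased via $\log|ax|$ for $a \in \Gamma$ and $x \in \Lambda$) to density of $\{\log r(a)\}$ on the strictly positive sub-semigroup. The natural bridge is the identity $\log r(a) = \log|a v(a)|$ for $a > 0$ together with $v(a) \in \Lambda$; but checking that the non-arithmetic structure passes through this identification---and that $0$ lies in the interior, not at the boundary, of the achievable range of $\log r$-values---is the technical heart, and uses the full force of $m(1) = 1$, $m'(1) < 0$, and the moment condition.
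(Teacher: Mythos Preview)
Your combinatorial reduction is essentially the same as the paper's, and it is correct: products $y_1\cdots y_n$ with $y_j\in\mathrm{supp}(\mu_Y)$ are exactly the cover-set sums, so it suffices to produce strictly positive elements of $\Gamma(\mu_Y)$ (or, in Case~1, appropriate products in $\Gamma(\mu)$ scaled by powers of integers in $\mathrm{supp}(N)$) with spectral radii on opposite sides of $1$.

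The spectral step, however, has a genuine gap. First, from $r(\mathbb{E}Y)=1$ you only conclude $\lambda_Y\le 0$, but you actually need $\lambda_Y<0$ strictly to get products with small spectral radius via the law of large numbers; this strict inequality is precisely where non-arithmeticity enters, through the strict convexity of $s\mapsto\log\kappa(s)$ (the paper's Lemma~\ref{lem::gamma_is_strictly_smaller} and Corollary~\ref{cor::gamma_is_strictly_smaller}). Second, and more seriously, the claim that ``the closed additive subgroup generated by $\{\log r(y_1\cdots y_n)\}$ is $\mathbb{R}$'' does not imply the existence of products with $\log r$ on both sides of~$0$: the group generated by a set of positive reals can be all of $\mathbb{R}$ while the set itself lies entirely in $(0,\infty)$. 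So the ``density'' heuristic does not by itself produce an $l$ with $r(l)>1$; you need a separate mechanism for the large side.

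The paper's argument for the large side (Lemma~\ref{lem::products_with_large_spectral_radius}) is different from what you sketch and does not use density at all. It compares expectation to typical behaviour: $\mathbb{E}\langle v,G_n u\rangle$ grows like $r(\mathbb{E}A)^n$, while $\|G_n\|\le e^{n(\gamma+\delta)}$ with probability $1-o(1)$; since $r(\mathbb{E}A)>e^\gamma$ (again from strict convexity), the event $\{\|G_n\|>e^{n(\gamma+\delta)}\}$ has small probability but must carry almost all of the expectation, forcing the existence of realizations with $\langle v,G_n u\rangle$ much larger than $r(\mathbb{E}A)^n$. Sandwiching such a realization between two copies of a fixed strictly positive $h\in\Gamma$ then controls the spectral radius from below. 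This expectation-versus-typical argument is the missing ingredient in your proposal.
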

	{  Remark that for any cover set $U$ with finite level,  Condition~\ref{cond::conditions_on_both_N_and_mu} still holds if we are in  Case 2 of Proposition~\ref{prop::non-arithmeticity-implies-others}, with $Y$ replaced by $\sum_{u \in U} A_{u | 1} \cdots A_{u  | |u |}$. 
	}

	Let $(M_i)_{i \geq 1}$ be a sequence of i.i.d. matrices with law $\mu$
	satisfying  \eqref{1st-moment-cond}, 
	{ $G_n = M_1 \cdots M_n$ for each $n \geq 1$}, and
	\begin{equation}
		\gamma = \lim_{n \to \infty}\frac{1}{n} \mathbb{E}[\log\|G_n\|]
	\end{equation}
	be the Lyapunov exponent associated with the sequence $(M_n)_{n \geq 1}$.
	Then $\gamma$ is finite. Recall that 
	\begin{equation}
		\kappa(s) := \lim_{n \to \infty}\mathbb{E}[\|G_n \|^s]^{\frac{1}{n}}, \quad m(s) := \mathbb{E}[N] \kappa(s), \quad s \in I_\mu. 
	\end{equation}
	{ Recall also that from~\cite[Lemma 4.14]{BurDaGuiMent14}, when $\alpha = 1$, we have $\kappa(1) = r(\mathbb{E}M_1)$. } Define the convex function
	\begin{equation}
		\Lambda(s) = \log \kappa(s), \quad s \in I_\mu. 
	\end{equation}
	
	\begin{lemma}\label{lem::gamma_is_strictly_smaller}
		Assume Conditions~\ref{cond::conditions_on_N},~\ref{cond::condition_on_mu}. Suppose that \mbox{$\int \log_+ N(a)d\mu(a) < +\infty$}, $I_\mu$ has non-empty interior, and $\mu$ is non-arithmetic.  Then, we have
		\begin{equation}\label{gamma-less-1-s0}
			\gamma < \frac{1}{s} \log \kappa(s), \quad \forall s \in I_\mu.
		\end{equation}
	\end{lemma}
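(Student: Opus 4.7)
Set $\Lambda(s) := \log \kappa(s)$. The plan is to exploit the convexity of $\Lambda$ on $I_\mu$ together with its behavior at the origin, and then to upgrade a weak inequality to a strict one via the non-arithmeticity of $\mu$. By H\"older's inequality applied to $\mathbb{E}[\|G_n\|^s]^{1/n}$ and passage to the limit, $\Lambda$ is convex on $I_\mu$, with $\Lambda(0) = \log \kappa(0) = 0$. Since a convex function vanishing at the origin has non-decreasing chord slope, $s \mapsto \Lambda(s)/s$ is non-decreasing on $I_\mu \setminus \{0\}$, and its limit as $s \to 0^+$ equals the right derivative $\Lambda'_+(0)$. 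The lemma therefore reduces to (i) identifying $\Lambda'_+(0) = \gamma$ and (ii) showing that $\Lambda$ is not affine on any sub-interval $[0, s_0] \subset I_\mu$ with $s_0 > 0$.

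For (i), I would combine Jensen's inequality
\[
\log \mathbb{E}[\|G_n\|^s] \;\geq\; s\,\mathbb{E}[\log \|G_n\|]
\]
with Kingman's subadditive ergodic theorem. The moment hypothesis $\int \log_+ N(a)\,d\mu(a) < \infty$ forces $\mathbb{E}[\log_+ \|M_1\|] < \infty$, which suffices to run Kingman and obtain $n^{-1}\mathbb{E}[\log\|G_n\|] \to \gamma$. Dividing Jensen by $n$ and letting $n\to\infty$ yields $\Lambda(s) \geq s\gamma$ for all $s \in I_\mu$, whence $\Lambda'_+(0) \geq \gamma$. The reverse inequality $\Lambda'_+(0) \leq \gamma$ follows from a standard dominated-convergence argument using any interior point $s^* \in \mathrm{int}(I_\mu)$ to provide the envelope $\|a\|^{s^*}$.

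For (ii), I would argue by contradiction and invoke the spectral-gap theory for products of random matrices. Suppose $\Lambda(s_0) = \gamma s_0$ for some $s_0 > 0$ in $I_\mu$; convexity then gives $\Lambda \equiv \gamma s$ on $[0, s_0]$. Under Conditions \ref{cond::conditions_on_N}, \ref{cond::condition_on_mu}, the moment hypothesis, and the non-arithmeticity of $\mu$, the transfer operator
\[
P_s f(x) \;:=\; \int f(a \cdot x)\,|ax|^s\,d\mu(a)
\]
on continuous functions on $\mathbb{S}_+^{d-1}$ is quasi-compact with a simple, isolated leading eigenvalue $\kappa(s)$ for $s$ in a neighborhood of $0$, in the framework of \cite{BurDaGuiMent14} and \cite{XiaoGramaLiu_BerryEssen22}. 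This gives $\Lambda$ real-analytic near $0$ together with a central limit theorem for $\log\|G_n\|$ of asymptotic variance $\sigma^2 = \Lambda''(0)$, so $\Lambda(s) = \gamma s + \tfrac{1}{2}\sigma^2 s^2 + o(s^2)$ as $s \to 0^+$. Strict positivity $\sigma^2 > 0$ immediately contradicts $\Lambda \equiv \gamma s$ on $[0, s_0]$.

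The main obstacle is the implication \emph{non-arithmeticity $\Rightarrow \sigma^2 > 0$}. Equality $\sigma^2 = 0$ would force the centered cocycle $(a, x) \mapsto \log|ax| - \gamma$ to be a continuous coboundary $\vartheta_0(a \cdot x) - \vartheta_0(x)$ on $\overline{\{v(a) : a \in \Gamma, a > 0\}}$. Specializing Condition \ref{non-arithmetic-cond} with $t = 1$, $\theta = \gamma$, and $\vartheta := -\vartheta_0$ then makes the exponent $t\log|ax| - \theta + \vartheta(a \cdot x) - \vartheta(x)$ identically zero on $\Gamma \times \Lambda$, hence $\exp(\cdot) \equiv 1$, directly contradicting non-arithmeticity. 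This is the quantitative heart of the argument and is where the full force of Condition \ref{non-arithmetic-cond} is used.
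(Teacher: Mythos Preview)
Your proposal is correct and follows essentially the same route as the paper: identify $\gamma = \Lambda'(0+)$ and use non-arithmeticity together with spectral-gap theory to obtain strict convexity of $\Lambda$, whence $\Lambda(s) > s\Lambda'(0+) + \Lambda(0) = s\gamma$ for $s \neq 0$. The paper's proof is more compressed, citing \cite[Theorem~6.1]{BurDaGuiMent14} for the identity $\gamma = \Lambda'(0+)$ and \cite{GuiLe16}, \cite{XiaoGramaLiu_BerryEssen22} (via the spectral gap of \cite[Proposition~3.1]{BurDaGuiMent14}) for strict convexity on all of $I_\mu$, whereas you unpack both steps---proving $\Lambda'_+(0)=\gamma$ by Jensen plus dominated convergence, and deducing strict convexity near $0$ from $\sigma^2 = \Lambda''(0) > 0$ through the coboundary characterization of vanishing CLT variance.
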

	
	
	\begin{proof}[Proof of Lemma~\ref{lem::gamma_is_strictly_smaller}]
		From~\cite[Theorem 6.1]{BurDaGuiMent14}, we know that $\gamma = \Lambda'(0+)$. 
		{ Since $\mu$ is non-arithmetic, we can argue as in{  ~\cite[Theorem 2.6]{GuiLe16}} and ~\cite[Lemma 3.16]{XiaoGramaLiu_BerryEssen22} and derive that $\Lambda := \log \kappa$ is a strictly convex function on $I_\mu$. } Indeed, although the statements there require different conditions on the measure $\mu$, we can prove in the same way by using the spectral gap theory developed in~\cite[Proposition 3.1]{BurDaGuiMent14} and the non-arithmeticity of $\mu$. 
		Since $\Lambda$ is strictly convex, we obtain that for $s \in I_\mu \setminus \{0\},$
		\begin{equation}
			\log \kappa(s) = \Lambda(s) > s \Lambda'(0+) + \Lambda(0) = s \gamma.
		\end{equation}
	\end{proof}

	\begin{corollary}\label{cor::gamma_is_strictly_smaller}
		{ Under the hypothesis of Lemma~\ref{lem::gamma_is_strictly_smaller} and Condition~\ref{cond::conditions_on_alpha}, we have}
		\begin{equation}\label{gamma-le-log-frac-1-EN}
			\gamma < \log \frac{1}{\mathbb{E}[N]}. 
		\end{equation}
	\end{corollary}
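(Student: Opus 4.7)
The proof is an almost immediate consequence of Lemma~\ref{lem::gamma_is_strictly_smaller} combined with Condition~\ref{cond::conditions_on_alpha}. The plan is to apply the lemma at the specific value $s = \alpha$ and then translate the resulting inequality using the normalization $m(\alpha) = 1$.

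First, Condition~\ref{cond::conditions_on_alpha} provides $\alpha \in (0,1] \cap \mathrm{int}(I_\mu)$ with $m(\alpha) = \mathbb{E}[N]\kappa(\alpha) = 1$. In particular $\alpha \in I_\mu$, so Lemma~\ref{lem::gamma_is_strictly_smaller} applied at $s = \alpha$ yields
\begin{equation}
\gamma < \frac{1}{\alpha}\log \kappa(\alpha).
\end{equation}
From $m(\alpha) = 1$ we get $\kappa(\alpha) = 1/\mathbb{E}[N]$, hence $\log \kappa(\alpha) = -\log \mathbb{E}[N]$, so that
\begin{equation}
\gamma < -\frac{1}{\alpha}\log \mathbb{E}[N].
\end{equation}

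Second, I would compare the right-hand side with $\log(1/\mathbb{E}[N]) = -\log \mathbb{E}[N]$. By Condition~\ref{cond::conditions_on_N}, $\mathbb{E}[N] > 1$, so $\log \mathbb{E}[N] > 0$; combined with $\alpha \in (0,1]$, i.e.\ $1/\alpha \geq 1$, we obtain
\begin{equation}
-\frac{1}{\alpha}\log \mathbb{E}[N] \leq -\log \mathbb{E}[N] = \log \frac{1}{\mathbb{E}[N]}.
\end{equation}
Combining the two displays gives $\gamma < \log(1/\mathbb{E}[N])$, which is~\eqref{gamma-le-log-frac-1-EN}.

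There is essentially no obstacle in this argument: the content lies entirely in Lemma~\ref{lem::gamma_is_strictly_smaller} (whose proof uses the strict convexity of $\Lambda$ coming from non-arithmeticity), and the corollary just specializes it at the critical exponent $\alpha$. The only point worth flagging is that the hypothesis $\alpha \in \mathrm{int}(I_\mu)$ must be invoked to ensure that $\alpha$ is an admissible value of $s$ in Lemma~\ref{lem::gamma_is_strictly_smaller}; note that the condition $m'(\alpha) < 0$ from Condition~\ref{cond::conditions_on_alpha} plays no role here, only $m(\alpha) = 1$ is needed.
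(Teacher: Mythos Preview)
Your proof is correct and follows essentially the same approach as the paper: apply Lemma~\ref{lem::gamma_is_strictly_smaller} at $s=\alpha$, use $m(\alpha)=1$ to rewrite $\kappa(\alpha)=1/\mathbb{E}[N]$, and then use $\alpha\in(0,1]$ together with $\mathbb{E}[N]>1$ to conclude. Your additional remarks (that $\alpha\in\mathrm{int}(I_\mu)$ is needed to apply the lemma, and that $m'(\alpha)<0$ is not used) are accurate observations.
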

	\begin{proof}
		Using Lemma~\ref{lem::gamma_is_strictly_smaller} and the condition that $\mathbb{E}[N]\kappa(\alpha) = m(\alpha) = 1$, we have (since $\alpha \in (0,1]$), 
		\begin{equation}
			\gamma   < \frac{1}{\alpha} \log \kappa(\alpha) = \frac{1}{\alpha} \log \frac{1}{\mathbb{E}[N]} \leq \log \frac{1}{\mathbb{E}[N]}.
		\end{equation}
		So \eqref{gamma-le-log-frac-1-EN} holds.  
	\end{proof}
	
	
	\begin{lemma}\label{lem::product_with_small_spectral_radius}
		Assume Conditions~\ref{cond::conditions_on_N},\ref{cond::condition_on_mu},  $\int \log_+ N(a)d\mu(a) < +\infty$. 
		Then for all 
		$\eta > e^\gamma$
		and $\varepsilon > 0$, there exist $K \geq 1$ and $a_1, \cdots, a_{K} \in \mathrm{supp}(\mu)$ such that $g := a_{1} \cdots a_K > 0$ and
		\begin{equation}\label{equ::ad_hoc_8}
			\|g\| \leq \varepsilon \eta^K.
		\end{equation}
		In particular, we have
		\begin{equation}\label{equ::ad_hoc_9}
			r(g) \leq \varepsilon \eta^K.
		\end{equation}
	\end{lemma}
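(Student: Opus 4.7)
The plan is to combine Kingman's subadditive ergodic theorem (Furstenberg--Kesten) with the positivity clause of Condition~\ref{cond::condition_on_mu} and the support-transfer lemma~\ref{lem::support_breaking_general}. First I would invoke the positivity clause to fix matrices $b_1, \ldots, b_m \in \mathrm{supp}(\mu)$ such that $b := b_1 \cdots b_m$ is strictly positive; the integer $m$ and the constant $C := \|b\|$ are then frozen.

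Since $\|a\| \leq N(a)$, the hypothesis $\int \log_+ N(a)\, d\mu(a) < \infty$ yields $\mathbb{E}[\log_+\|M_1\|] < \infty$, and Kingman's subadditive ergodic theorem applied to the submultiplicative sequence $\log \|G_n\|$ gives $\tfrac{1}{n}\log \|G_n\| \to \gamma$ almost surely. Fixing any $\eta' \in (e^\gamma, \eta)$, this a.s.\ convergence forces $\mathbb{P}[\|G_n\| < (\eta')^n] \to 1$, so in particular this probability is strictly positive for all sufficiently large $n$. The map $(a_1, \ldots, a_n) \mapsto \|a_1 \cdots a_n\|$ is continuous on $G^n$, and by Part~\ref{item::support_breaking_independent_product} of Lemma~\ref{lem::support_breaking_general} the i.i.d.\ tuple $(M_1, \ldots, M_n)$ has support $\mathrm{supp}(\mu)^n$. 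Hence applying Part~\ref{item::support_breaking_without_independence} of the same lemma to the open set $\{(a_1, \ldots, a_n) : \|a_1 \cdots a_n\| < (\eta')^n\}$ produces deterministic matrices $a_1, \ldots, a_n \in \mathrm{supp}(\mu)$ with $\|a_1 \cdots a_n\| < (\eta')^n$.

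I would then define $g := a_1 \cdots a_n \, b_1 \cdots b_m$, a product of $K := n + m$ elements of $\mathrm{supp}(\mu)$. Strict positivity of $g$ would follow from the allowability clause applied to $a_1 \cdots a_n \in \Gamma$: for each row index $i$ there exists $j$ with $(a_1 \cdots a_n)_{ij} > 0$, and then $g_{ik} \geq (a_1 \cdots a_n)_{ij}\, b_{jk} > 0$ for every $k$, since $b > 0$. Submultiplicativity of the operator norm gives $\|g\| \leq (\eta')^n C$, and because $\eta'/\eta < 1$ one can choose $n$ large enough so that $(\eta'/\eta)^n \leq \varepsilon \eta^m / C$, which delivers $\|g\| \leq \varepsilon \eta^{n+m} = \varepsilon \eta^K$. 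The bound $r(g) \leq \|g\|$ is automatic from the definition of spectral radius.

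The only mildly delicate step is transferring the probabilistic norm bound on $\|G_n\|$ into a deterministic realization inside $\mathrm{supp}(\mu)^n$; this is precisely why I separate $\eta'$ from $\eta$, so that a \emph{strict} inequality defines an open event of positive probability and the open-set version of Lemma~\ref{lem::support_breaking_general} applies. The remaining hurdle, namely coupling the norm control with strict positivity, is handled cleanly by multiplying on the right by the fixed strictly positive matrix $b$ and using allowability of $\Gamma$ to propagate positivity into every row of $g$.
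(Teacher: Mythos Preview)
Your proof is correct and follows essentially the same strategy as the paper: obtain a positive-probability event $\{\|G_n\| < (\text{something})\cdot\eta^n\}$, transfer it to a deterministic realization in $\mathrm{supp}(\mu)^n$ via Lemma~\ref{lem::support_breaking_general}, then multiply by a fixed strictly positive product from $\Gamma$ and use submultiplicativity. The only cosmetic differences are that the paper uses the expectation definition of $\gamma$ (so that $\tfrac{1}{n}\mathbb{E}[\log\|G_n\|] < \log\eta - \delta$ directly forces positive probability, without invoking Kingman) and multiplies the positive block on the left rather than the right; neither change affects the substance of the argument.
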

	\begin{proof}
		Using the definition of $\gamma$ and the fact that $\gamma < \log \eta$, we know there exists $\delta > 0$ and an integer $n_0 > 0$ such that
		\begin{equation}
			\frac{1}{n} \mathbb{E}[\log \|G_n\|] < \log \eta - \delta, \quad \forall n \geq n_0.
		\end{equation}
		This implies that 
		\begin{equation}
			\mathbb{P}\big[\frac{1}{n}\log\|G_n\| < \log \eta - \delta \big] > 0, \quad \forall n \geq n_0.
		\end{equation}
		Using Part 1 of Lemma~\ref{lem::support_breaking_general}, we know that there exist $n \geq n_0$ and $b_1, \cdots, b_n \in \mathrm{supp}(\mu)$ such that $b = b_1\cdots b_n$ satisfies
		\begin{equation}
			\frac{1}{n} \log \|b_1\cdots b_n\| < \log \eta - \delta,
		\end{equation}
		which yields
		\begin{equation}
			\|b_1\cdots b_n\| < \exp(-n\delta)\eta^n.
		\end{equation}
		By Condition~\ref{cond::condition_on_mu}, there exist an integer $m \geq 1$ and $h_1,\cdots,h_m\in\mathrm{supp}(\mu)$ such that $h_1\cdots h_m > 0$. Then, using the submultiplicity of matrix norm, we have
		\begin{align}\label{bmb1ana1-le}
			\|h_1\cdots h_m b_1\cdots b_n\|
			& \leq \| h_1\cdots h_m\|\cdot\|b_1 \cdots b_n\|     \nonumber  \\ 
			&\leq \| h_1\cdots h_m\| e^{-n\delta}\eta^n = (\| h_1\cdots h_m\|\eta^{-m}e^{-n\delta})\eta^{n+m}. 
		\end{align}
		We choose $n$ large enough such that $\| h_1\cdots h_m\|\eta^{-m}e^{-n\delta} \leq \varepsilon$. Let $g = h_1\cdots h_m b_1\cdots b_n$ and $K = n+m$. Then, we have $g > 0$ and $\|g\| < \varepsilon{\eta^K}$. The inequality $r(g) \leq \varepsilon{\eta^K}$ follows from the fact that $r(g) = \lim_{k \to \infty} \|g^k\|^{1/k} \leq \|g\|$.
	\end{proof}
	
	%

	\begin{lemma}\label{lem::products_with_large_spectral_radius}
		Let $A$ be a random matrix with law $\mu$ which satisfies Conditions~\ref{cond::conditions_on_N},~\ref{cond::condition_on_mu}, 
		$\int \log_+ N(a)d\mu(a) < +\infty$,  
		$1 \in I_\mu$, and
		$r(\mathbb EA) > e^{\gamma}.$
		Then, for all 
		$R > 0$, there exist an integer $K > 0$ and a sequence $a_1, \cdots, a_K$ in $\mathrm{supp}(\mu)$ such that $g := a_{1} \cdots a_K > 0$ and that
		\begin{equation}
			r(g) \geq   R \; \big(r(\mathbb E A)\big)^K. 
		\end{equation} 
	\end{lemma}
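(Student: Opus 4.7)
The goal is to produce, for arbitrary $R > 0$, a single strictly positive product $g = a_1 \cdots a_K$ satisfying $r(g) \ge R\, r(\mathbb{E} A)^K$. The strategy rests on a mean-one positive martingale built from the Perron data of $\mathbb{E} A$: the strict gap $r(\mathbb{E} A) > e^{\gamma}$ forces this martingale to converge to $0$ almost surely while retaining mean $1$, which in turn supplies atypical realizations where the relevant bilinear form is arbitrarily large.

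\textbf{Perron data and martingale.} First I would check that $\mathbb{E} A$ is primitive: Condition~\ref{cond::condition_on_mu} provides $h_1, \dots, h_m \in \mathrm{supp}(\mu)$ with $h := h_1 \cdots h_m > 0$ entrywise, and since entrywise strict positivity is an open condition, it persists on a product of neighbourhoods of positive $\mu$-measure, so $(\mathbb{E} A)^m > 0$. Perron--Frobenius then gives strictly positive right and left eigenvectors $u, v$ of $\mathbb{E} A$ for the simple eigenvalue $r := r(\mathbb{E} A)$, normalised by $v^T u = 1$. With $G_n = M_1 \cdots M_n$ and $M_i$ i.i.d.\ $\mu$, the process $X_n := v^T G_n u / r^n$ is a nonnegative martingale with $\mathbb{E}[X_n] = 1$, converging a.s.\ to some $W \ge 0$.

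\textbf{Degeneracy.} I claim $W = 0$ a.s. Indeed, on $\{W > 0\}$ one has $\frac{1}{n}\log(v^T G_n u) \to \log r$; together with $v^T G_n u \le (\max_i v_i)(\max_j u_j)\|G_n\|$ this forces $\frac{1}{n}\log\|G_n\| \to \log r$ on that event. But Kingman's subadditive ergodic theorem (applicable because $\int \log_+ N(a)\, d\mu < \infty$ implies $\mathbb{E}[\log_+\|M_1\|] < \infty$) gives $\frac{1}{n}\log\|G_n\| \to \gamma < \log r$ a.s., a contradiction. With $W = 0$ a.s.\ and $\mathbb{E}[X_n] \equiv 1$, dominated convergence applied to $X_n \wedge M$ yields $\mathbb{E}[X_n \mathbf{1}_{\{X_n > M\}}] \to 1$ for every $M > 0$; in particular $\mathbb{P}[X_n > M] > 0$ for all sufficiently large $n$.

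\textbf{Extraction and spectral-radius bound.} Set $h_{\min} := \min_{i,j} h_{ij} > 0$ and $C := h_{\min}/[(\max_i v_i)(\max_j u_j)]$. Given $R > 0$, let $M := R r^m / C$, choose $n$ with $\mathbb{P}[v^T G_n u > M r^n] > 0$, and apply Lemma~\ref{lem::support_breaking_general}(\ref{item::support_breaking_without_independence}) to the open set $\{b : v^T b u > M r^n\}$ to extract $a_1, \dots, a_n \in \mathrm{supp}(\mu)$ with $b := a_1 \cdots a_n$ satisfying $v^T b u > M r^n$. Then $g := h_1 \cdots h_m\, a_1 \cdots a_n = hb$ is strictly positive, since $b \in \Gamma$ has every column nonzero by allowability and $h > 0$. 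Finally $h \ge h_{\min}\, \mathbf{1}\mathbf{1}^T$ entrywise yields $hb \ge h_{\min}\, \mathbf{1}(\mathbf{1}^T b)$, and monotonicity of the spectral radius on nonnegative matrices, applied to this rank-one dominated matrix, gives
\begin{equation*}
r(g) \ge h_{\min}\, \mathbf{1}^T b\, \mathbf{1} \ge C\, v^T b\, u > C M r^n = R\, r^{n+m},
\end{equation*}
so $K := n + m$ works. The main obstacle is the degeneracy step: it is what converts the abstract gap $r(\mathbb{E}A) > e^{\gamma}$ into a quantitative supply of arbitrarily large rare values of $v^T G_n u$; the subsequent activation by $h$ is comparatively routine but essential for transforming control of the bilinear form $v^T b u$ into a lower bound on $r(hb)$, which otherwise is not controlled from below by any norm of $b$.
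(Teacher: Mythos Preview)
Your proof is correct and rests on the same core mechanism as the paper's: the strict gap $r(\mathbb{E}A) > e^{\gamma}$ forces a mismatch between the mean behaviour $\mathbb{E}[v^T G_n u] = r^n$ and the almost-sure growth $\|G_n\|^{1/n} \to e^{\gamma}$, which supplies realizations with $v^T G_n u$ arbitrarily large relative to $r^n$. The differences are in packaging. The paper splits the expectation over $E_n = \{\log\|G_n\| \le n(\gamma+\delta)\}$ and its complement, shows $\mathbb{E}[\langle v, G_n u\rangle \mathbf{1}_{E_n^c}] > r^n/2$ while $\mathbb{P}[E_n^c] \to 0$, and then conditions on $E_n^c$; you instead recognize $X_n = v^T G_n u / r^n$ as a mean-one nonnegative martingale whose a.s.\ limit is $0$, so non-uniform-integrability delivers $\mathbb{P}[X_n > M] > 0$ directly. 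Your route is a touch cleaner and more conceptual. The second difference is that the paper activates on both sides, forming $hbh$ with $K = n+2m$, whereas you activate only on the left, forming $hb$ with $K = n+m$; this works because allowability guarantees every column of $b \in \Gamma$ is nonzero, so $hb > 0$ already, and the rank-one lower bound $h \ge h_{\min}\mathbf{1}\mathbf{1}^T$ then gives $r(hb) \ge h_{\min}\,\mathbf{1}^T b\,\mathbf{1} \ge C\, v^T b u$. Both approaches are sound; yours is marginally more economical.
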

	\begin{proof}
		By Condition~\ref{cond::condition_on_mu},
		there are $m \geq 1, h_1, \cdots, h_m \in \mbox{supp} (\mu)$ such that 
		$h_1 \cdots h_m >0$. This implies that  $(\mathbb{E}[A])^{m} >0$, that is, 
		$\mathbb{E}[A]$ is primitive. 
		Using the Perron-Frobenius decomposition, we know that there exists a positive vector $u$ such that $\mathbb{E}[A]u = r( \mathbb E A) u$. 
		By   the law of large numbers  of \cite{furstenberg1960products}
		(which is a consequence of  Kingman's subadditive ergodic theorem), we have 
		\begin{equation}\label{equ::ad_hoc_23}
			\gamma = \lim_{n \to \infty} \frac{1}{n} \log \|G_n\| \quad a.s. 
		\end{equation}
		The idea to this proof is as follows. On the one hand,  the expectation  $\mathbb E[ \|G_n\|]$ is of order $ \big( r(\mathbb E A)\big)^{n(1+o(1))}$. On the other hand, we have $\|G_n\| \leq e^{n(\gamma + o(1))}$ with high probability. Note that 
		$ \big( r(\mathbb E A)\big)^{n(1+o(1))}$
		is much larger than $e^{n(\gamma + o(1))}$. So $\|G_n\|$ has to be much larger than 
		$ \big( r(\mathbb E A)\big)^{n(1+o(1))}$
		with positive probability. 
		
		Let $\delta = \left( \log r( \mathbb EA) - \gamma \right) / 2 > 0$, and $v$ be a positive vector with $\langle u, v \rangle = 1$. Define $$E_n = \{ \log \|G_n\| \leq n(\gamma + \delta) \}.$$ Using the fact that $\mathbb{E}[G_nu] = (\mathbb{E}A)^nu = \big( r(\mathbb E A)\big)^{n}u$,
		we have
		\begin{align}\label{equ::ad_hoc_24}
			\big(r(\mathbb E A)\big)^n &= \mathbb{E}[\langle v, G_nu\rangle] \notag \\
			& = \mathbb{E}[\langle v, G_nu\rangle\mathbbm{1}_{E_n}] + \mathbb{E}[\langle v, G_nu\rangle\mathbbm{1}_{E_n^c}] \notag \\
			& \leq |v|_\infty |u| \exp(n(\gamma+\delta)) + \mathbb{E} [\langle v, G_nu\rangle \mathbbm{1}_{E_n^c}], 
		\end{align}
		where $|v|_\infty := \max_{i = 1, \cdots, d} |v(i)|$ if $v = (v(1), \cdots, v(d))$. 
		Since $\gamma + \delta < \log r(\mathbb E A)$, there exists $n_0 \geq 1$ large enough such that for all $n \geq n_0$,
		\begin{equation}\label{equ::ad_hoc_26}
			\big(r(\mathbb E A)\big)^n - |v|_\infty |u| \exp(n(\gamma+\delta)) > \frac{\big( r(\mathbb E A)\big)^{n}}{2} .
		\end{equation} 
		By~\eqref{equ::ad_hoc_24}, this implies that
		\begin{equation}\label{equ::ad_hoc_25}
			\mathbb{E}[\langle v, G_nu\rangle \mathbbm{1}_{E_n^c}] > \frac{\big(r(\mathbb E A)\big)^{n}}{2}, \quad n \geq n_0.
		\end{equation}
		In particular $\mathbb{P}[E_n^c] > 0$ for $n \geq n_0$. 
		
		Let $\varepsilon > 0$ be any small number.  Using~\eqref{equ::ad_hoc_23}, we know that there exists $n_1 = n_1(\varepsilon)  \geq n_0$ such that for all $n \geq n_1$, we have $\mathbb{P}[E_n^c] < \varepsilon$. Using the fact that $\mathbb{P}[E_n^c] > 0$ for $n \geq n_1$, we see that for each $n \geq n_1$, we can define a probability measure $\mathbb{\tilde{P}}$ (which depends on $n$) such that  the corresponding expectation 
		$ \mathbb{\tilde{E}}$ satisfies, for any continuous function $f: G \to \mathbb R$, 
		\begin{equation}
			\mathbb{\tilde{E}}[f(G_n)] = \frac{\mathbb{E}[f(G_n)\mathbbm{1}_{E_n^c}]}{\mathbb{P}[E_n^c]}.
		\end{equation}
		Using~\eqref{equ::ad_hoc_25} and the fact that $\mathbb{P}[E_n^c] < \varepsilon$ for $n \geq n_1$, we have
		\begin{equation}
			\mathbb{\tilde{E}}[\langle v, G_nu\rangle] = \frac{\mathbb{E}[\langle v, G_nu\rangle \mathbbm{1}_{E_n^c}]}{\mathbb{P}[E_n^c]} > \frac{\big(r(\mathbb E A)\big)^{n+1}}{2\varepsilon}, \quad n \geq n_1. 
		\end{equation}
		This implies that
		\begin{equation}
			\frac{\mathbb{P}[E_n^c \cap \{ \langle v, G_nu\rangle> {\big(r(\mathbb E A)\big)^{n}} / {(2\varepsilon)} \}]}{\mathbb{P}[E_n^c]}  = \mathbb{\tilde{P}}\big[\langle v, G_nu\rangle > {\big(r(\mathbb E A)\big)^{n}} / {(2\varepsilon)} \big] > 0, \quad n \geq n_1.
		\end{equation}
		In particular, we have $\mathbb{P}\big[\langle v, G_nu\rangle > {\big(r(\mathbb E A)\big)^{n}} / {(2\varepsilon)} \big] > 0$ for all $n \geq n_1$. Thus, by Part 1 of Lemma~\ref{lem::support_breaking_general}, for each given $n \geq n_1$, there exists a sequence of elements $b_1, \cdots, b_n$ in $\mathrm{supp}(\mu)$ such that $b := b_1 \cdots b_n$ satisfies
		\begin{equation}\label{lower-bd-vbw}
			\langle v, bu\rangle> \frac{\big(r(\mathbb E A)\big)^{n}}{2\varepsilon}.
		\end{equation}
		
		{ Recall that $h_1, \cdots, h_m \in \mathrm{supp}(\mu)$ satisfy $h = h_1 \cdots h_m > 0$.} Let $c = \min_{1\leq i, j \leq d} h(i,j) > 0$, where $h(i,j)$ is the $(i,j)$-th entry of $h$. Then, we have $h \geq cww^T$ where $w = (1, \cdots, 1)^T$. { For two vectors $v_1,v_2$, denote $v_1\otimes v_2 = v_1 v_2^T$.} Note that for $n \geq n_1$, with 
		$\|b \|_{1,1} =  \sum_{1\leq i,j \leq d}  b(i,j) $, 
		\begin{equation}
			hbh \geq c^2 ww^T bww^T
			= c^2 w \|b \|_{1,1}  w^T
			\geq \frac{c^2\langle v, bu\rangle}{|v|_\infty |u|_\infty} w\otimes w.
		\end{equation}
		Since $0 \leq g_1 \leq g_2$ implies that $r(g_1) \leq r(g_2)$, we get that for $n \geq n_1$, 
		\begin{equation}\label{lower-bd-raba-first}
			r(hbh) \geq \frac{c^2\langle v, bu\rangle}{|v|_\infty |u|_\infty} r(w \otimes w) .
		\end{equation}
		Combining~\eqref{lower-bd-vbw}, \eqref{lower-bd-raba-first}, and the fact that $r(w \otimes w) = d$, we obtain that for $n \geq n_1$, 
		\begin{equation}\label{lower-bd-raba-second}
			r(hbh) \geq \frac{\big(r(\mathbb E A)\big)^{n}c^2d}{2\varepsilon |v|_\infty |u|_\infty  } = \frac{\big(r(\mathbb E A)\big)^{-2m}c^2d}{2 \varepsilon |v|_\infty |u|_\infty} \big(r(\mathbb E A)\big)^{n+ 2m}.
		\end{equation} 
		Therefore, choosing $\varepsilon <  {\big(r(\mathbb E A)\big)^{-2m}c^2d} / ({2 R |v|_\infty |u|_\infty })$,  
		$n \geq n_1$ 
		(which depends on  $\varepsilon$) and letting $g = hbh$, $K = n + 2m$, we have $g > 0$ and
		\begin{equation}
			r(g) \geq R\big(r(\mathbb E A)\big)^{K}.
		\end{equation}		
		This finishes the proof.  
	\end{proof}
	We are now prepared to prove Proposition~\ref{prop::non-arithmeticity-implies-others}.
	\begin{proof}[Proof of Proposition~\ref{prop::non-arithmeticity-implies-others}]
		(1)  Proof of  \ref{cond::conditions_on_both_N_and_mu}  in Case 1. 
		Let $n_1 \in \mathrm{supp}(N)$ such that $n_1 \leq \mathbb{E}[N] $. Using the non-arithmeticity and the condition  that $\int \log_+ N(a)d\mu(a) < +\infty$, we deduce from Corollary~\ref{cor::gamma_is_strictly_smaller} that $\gamma < \log \frac{1}{\mathbb{E}[N]}$. Applying Lemma~\ref{lem::product_with_small_spectral_radius} with $\eta = 1/\mathbb{E}[N]$ and $\varepsilon = 1$, we know that there exist $K \geq 1$ and $a_1, \cdots, a_K \in \mathrm{supp}(\mu)$ such that $g := a_1 \cdots a_K$ satisfies $g > 0$, 
		\begin{equation}
			r(g) < \mathbb{E}[N]^{-K}.
		\end{equation}
		Let $l_1=n_1^Kg = n_1^K a_1 \cdots a_K $. Then, we have $l_1 > 0$, $r(l_1) < (n_1 / \mathbb{E}[N])^K \leq 1$, and $l_1$ can be written as
		\begin{equation}\label{l_1-equal-u-Nk}
			l_1 = \sum_{u\in (\mathbb{N}^*)^K} b_{u|1} \cdots b_{u|K}, 
		\end{equation}
		where 
		{ 	$(b_{v1}, b_{v2}, \cdots, b_{vn_1}, b_{v(n_1 + 1)}, \cdots) = (a_{|v| + 1}, a_{|v|+1}, \cdots , a_{|v|+1}, 0, \cdots)$ for $v\in   \bigcup_{ 0 \leq j \leq K - 1} (\mathbb{N}^*)^j$.}
		{ Using Condition~\ref{cond::conditions_on_independence} and Part 2 of  Lemma~\ref{lem::support_breaking_general}, we know $(n_1, b_{v1}, b_{v2}, \cdots) \in \mathrm{supp}(N, A_1, A_2, \cdots)$ for each $v \in \bigcup_{ 0 \leq j \leq K-1} (\mathbb{N}^*)^j$. }

		{ Since $r(\mathbb{E}A) \mathbb{E}[N] = \kappa(1) \mathbb{E}[N] = 1$ (cf. Condition~\ref{cond::conditions_on_alpha} with $\alpha = 1$), we have $r(\mathbb{E}A) = 1 / \mathbb{E}[N] > e^\gamma$. Using Lemma~\ref{lem::products_with_large_spectral_radius} instead of Lemma~\ref{lem::product_with_small_spectral_radius}} ,
		we can also construct a matrix $l_2 >0$ such that $r(l_2) > 1$ and $l_2$ can be written in a  form similar to~\eqref{l_1-equal-u-Nk}. This implies Condition~\ref{cond::conditions_on_both_N_and_mu} 
		in Case 1. 
		
		(2) Proof of  \ref{cond::conditions_on_both_N_and_mu}  in Case 2. The argument is similar. Indeed, 
		note that $\mu_Y$ also satisfies Condition~\ref{cond::condition_on_mu}. We can substitute $\mu$ with $\mu_Y$ in Lemma~\ref{lem::gamma_is_strictly_smaller} and Corollary~\ref{cor::gamma_is_strictly_smaller} to get that the Lyapunov exponent of $\mu_Y$ is strictly negative. Then, we can argue as in the proof of Lemmas~\ref{lem::product_with_small_spectral_radius} and~\ref{lem::products_with_large_spectral_radius} (with $\eta = 1$) 
		to conclude 
		that there exist $l_1', l_2' \in \Gamma(\mu_Y)$ which satisfy $l_1' > 0$, $l_2' > 0$ and $r(l_1') < 1 < r(l_2')$. { Using Part 1 of Lemma~\ref{lem::support_breaking_general} with $n = 1$, $X_1 = (N, A_1, A_2, \cdots)$, $f : (m, a_1, a_2\cdots) \mapsto \sum_{i = 1}^m a_i$,
			we see that $\mathrm{supp}(Y) = \mathrm{supp}(f(X_1)) \subset \overline{f(\mathrm{supp}(X_1))}$, so each $y' \in \mathrm{supp}(Y)$ can be approximated by $y$ of the form $y = a_1 + \cdots + a_n$, where $(n, a_1, \cdots, a_n, 0, \cdots) \in \mathrm{supp}(N, A_1, A_2, \cdots)$. Thus, $l_1'$ and $l_2'$ can be approximated by $l_1$ and $l_2$ of the forms similar to~\eqref{l_1-equal-u-Nk} which satisfy $l_1 > 0$, $l_2 > 0$ and $r(l_1) < 1 < r(l_2)$.}
		This shows that Condition~\ref{cond::conditions_on_both_N_and_mu} holds.
	\end{proof}

	\section{Support of solution: proofs of theorems}\label{section::support}
	
	
	We use the inclusion relation derived in Lemma~\ref{lem::support_breaking_general} to prove Theorems \ref{thm::second_theorem} and~\ref{thm::main_theorem}. We  also  prove Theorem~\ref{thm::third_theorem} by a different approach.

	
	\subsection{Proof of Theorems~\ref{thm::second_theorem} and \ref{thm::main_theorem}}
	In this section, we assume Conditions~\ref{cond::conditions_on_N}, \ref{cond::condition_on_mu}, \ref{cond::conditions_on_both_N_and_mu}. Let $l_1, l_2$ be two matrices as defined in Condition~\ref{cond::conditions_on_both_N_and_mu}, that is, there exist 
	two cover sets $U_1, U_2$ of finite levels such that
	\begin{equation}
		l_i = \sum_{u\in U_i} a_{u|1}^{(i)} \cdots a_{ u| |u|}^{(i)}, \quad i = 1,2,
	\end{equation}
	where $(a^{(i)}_{u1},a^{(i)}_{u2},\cdots) \in  \mathrm{supp}(A_1,A_2,\cdots)$, $u \in \mathbb{U}$,
	satisfying
	\begin{equation}
		l_1 > 0, \quad l_2 > 0, \quad r(l_1) < 1 , \quad r(l_2) > 1.
	\end{equation}
	By~\eqref{equ::support_breaking_easy_form}, we know that $\{ l_iz:z \in \mathrm{supp}(Z) \} \subset \mathrm{supp}(Z)$, $i \in \{1, 2 \}$. By the Perron-Frobenius theorem, for each $i \in \{ 1, 2 \}$, there exist two vectors $u_i > 0, v_i > 0$ and a matrix $q_i$ such that
	\begin{align} \label{def-vi} 
		& l_i = r(l_i)v_i \otimes u_i + q_i, \quad l_iv_i = r(l_i)v_i,\quad l_i^Tu_i = r(l_i)u_i, \\ \label{li-rli-vi-otimes-ui-qi}
		& \langle u_i, v_i\rangle = 1, \quad \langle v_i, v_i\rangle = 1, \quad r(q_i) < r(l_i).
	\end{align}
	where $v_i \otimes u_i := v_i u_i^T$ is a matrix with rank one. 
	
		Recall that for a matrix $g$ and a vector $x$, we write $g \cdot x = \frac{gx}{|gx|}$ if $gx \neq 0$. 
	From \cite{HennionLimit97}, there exists a distance $d$ on 
	$\mathbb{S}_+^{d-1}= \{ x \in \mathbb R_+^d: \| x \| = 1 \}$ such that 
	\begin{itemize}
		\item $\sup\{d(x, y) : x , y \in \mathbb{S}_+^{d-1}\} = 1$; 
		\item $|x-y| \leq 2d(x,y)$ for $x , y \in \mathbb{S}_+^{d-1}$.
		\item on  the subset 
		$  \{ x \in \mathbb S_+^{d-1}:  x>0 \}$
		of  $\mathbb{S}_+^{d-1}$, 
		the metric $d$ induces the same topology as that of the Euclidean metric. 
	\end{itemize}
	Moreover, for any nonnegative matrix $g$, there exists $c(g) \leq 1$ such that
	\begin{itemize}
		\item for all $x, y \in \mathbb{S}_+^{d-1}$, we have $d(g \cdot x, g \cdot y) \leq c(g) d(x, y)$;
		\item $c(g) < 1$ if and only if $g$ is strictly positive;
		\item if $g'$ is another nonnegative matrix, then $c(gg') \leq c(g)c(g')$.
	\end{itemize}
	
	\begin{lemma}\label{lem::support-contains-zero}
		Under Conditions~\ref{cond::conditions_on_N}, \ref{cond::condition_on_mu}, \ref{cond::conditions_on_both_N_and_mu}, we have $0 \in \mathrm{supp}(Z)$.
	\end{lemma}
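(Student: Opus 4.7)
The plan is to exploit the self-similar inclusion relation $\{ l_1 z : z \in \mathrm{supp}(Z) \} \subset \mathrm{supp}(Z)$ established in~\eqref{equ::support_breaking_easy_form}, together with the contractivity of $l_1$ coming from $r(l_1) < 1$. Since $l_1$ arose from a cover set $U_1$ of finite level and a realization in $\mathrm{supp}(A_1,A_2,\cdots)$, the inclusion $l_1 \cdot \mathrm{supp}(Z) \subset \mathrm{supp}(Z)$ is immediate from the displayed formula just before Section~2.2, applied with $l = l_1$.

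First, I would iterate: by induction on $n$, one gets $l_1^n z \in \mathrm{supp}(Z)$ for every $z \in \mathrm{supp}(Z)$ and every $n \geq 1$. This uses only the fact that the inclusion $l_1 \cdot \mathrm{supp}(Z) \subset \mathrm{supp}(Z)$ can be composed with itself; no further probabilistic input is needed. Next, because $Z$ is an $\mathbb{R}_+^d$-valued random variable, its support is nonempty, so one may fix some $z_0 \in \mathrm{supp}(Z)$.

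Then I would use Gelfand's formula: $\lim_{n\to\infty}\|l_1^n\|^{1/n} = r(l_1) < 1$, which implies $\|l_1^n\| \to 0$, and hence
\begin{equation}
|l_1^n z_0| \leq \|l_1^n\|\, |z_0| \longrightarrow 0 \quad \text{as } n \to \infty.
\end{equation}
Since $\mathrm{supp}(Z)$ is closed (the support of any Borel probability measure on a Polish space is closed by definition~\eqref{def-support}) and contains the sequence $(l_1^n z_0)_{n\geq 1}$ which converges to $0$, we conclude $0 \in \mathrm{supp}(Z)$.

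No real obstacle is expected here: Conditions~\ref{cond::conditions_on_N} and~\ref{cond::condition_on_mu} are used only to guarantee that the earlier inclusion~\eqref{equ::support_breaking_easy_form} is available, and Condition~\ref{cond::conditions_on_both_N_and_mu} supplies the contractive matrix $l_1$ with $r(l_1)<1$. The positivity $l_1 > 0$ is not even needed for this particular lemma; only the spectral radius condition matters. The argument is therefore short and purely deterministic once the inclusion relation and the existence of $l_1$ are taken for granted.
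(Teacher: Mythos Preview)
Your proof is correct and follows essentially the same approach as the paper: pick $z \in \mathrm{supp}(Z)$, iterate the inclusion~\eqref{equ::support_breaking_easy_form} with $l = l_1$ to get $l_1^n z \in \mathrm{supp}(Z)$, and use $r(l_1)<1$ together with closedness of the support to conclude $0 \in \mathrm{supp}(Z)$. The paper's version is simply more terse, omitting the explicit appeal to Gelfand's formula and the closedness of the support.
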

	\begin{proof}
		Choose $z \in \mathrm{supp}(Z)$. Using~\eqref{equ::support_breaking_easy_form}, we have  $l_1^n z \in \mathrm{supp}(Z)$ for  all $n \geq 1$. It follows from $r(l_1) < 1$ that $0 = \lim_{n \to \infty} l_1^nz \in \mathrm{supp}(Z)$.
	\end{proof}

	\begin{lemma}\label{lem::g1g2}
		Assume Conditions~\ref{cond::conditions_on_N}, \ref{cond::condition_on_mu},  \ref{cond::conditions_on_both_N_and_mu}. Then, there exists $\delta > 0$ such that for all $\varepsilon > 0$, there are two matrices $g_1, g_2$ which possess  the following properties:
		\begin{enumerate}
			\item[1.] $\delta v_2\otimes u_1 \leq g_i \leq \delta^{-1} v_2 \otimes u_1 $ for $i = 1, 2$; 
			\item[2.]  $r(g_1) \in [\frac{1}{2},1)$; 
			\item[3.]   for all $z \in \mathbb{S}_+^{d-1}$, we have $d(g_1\cdot z, v_2) \leq \varepsilon$;
			\item[4.]  $  \big\{ g_1 z_1 + g_2z_2 : z_1, z_2 \in \mathrm{supp}(Z) \big\} \subset \mathrm{supp}(Z)$.
		\end{enumerate}
	\end{lemma}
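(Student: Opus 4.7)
My plan is to construct both $g_1$ and $g_2$ as path-product contributions from a single iterated cover tree, exploiting the Perron--Frobenius expansions of $l_1$ and $l_2$ to force their main terms into the direction $v_2\otimes u_1$.

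I would start by expanding $l_2^m l_1^n$ using the decompositions $l_i = r(l_i)\, v_i\otimes u_i + q_i$ from~\eqref{def-vi}, together with the orthogonality identities $q_i v_i = 0$ and $u_i^T q_i = 0$ (consequences of $l_i v_i = r(l_i) v_i$ and $l_i^T u_i = r(l_i) u_i$), the idempotency $(v_i \otimes u_i)^2 = v_i\otimes u_i$, and the elementary product rule $(v_2 \otimes u_2)(v_1 \otimes u_1) = \langle u_2, v_1\rangle\, v_2 \otimes u_1$. Iterating these identities yields
\[
l_2^m l_1^n = \tau_{m,n}\, v_2 \otimes u_1 + E_{m,n}, \qquad \tau_{m,n} := r(l_2)^m r(l_1)^n \langle u_2, v_1 \rangle,
\]
where $\|E_{m,n}\|/\tau_{m,n} \to 0$ as $\min(m,n)\to\infty$, because every term of $E_{m,n}$ carries either $r(q_1)^n$ or $r(q_2)^m$ with $r(q_i) < r(l_i)$. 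Consequently $l_2^m l_1^n / \tau_{m,n}$ converges entrywise to the strictly positive matrix $v_2\otimes u_1$, which supplies Property 3 uniformly in $z\in\mathbb{S}_+^{d-1}$ and the shape bound in Property 1 with a $\delta > 0$ depending only on the spectral data of $l_1, l_2$.

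For Property 4, I would invoke the cover-set iteration~\eqref{equ::support_breaking_cover_set}, choosing first a realization $(n_0, a_1, \ldots, a_{n_0}) \in \mathrm{supp}(N, A_1, A_2, \cdots)$ with $n_0 \geq 2$ (such a realization exists because $N \geq 1$ a.s.\ and $\mathbb{E}[N] > 1$). Placing $m$ iterations of the $U_2$-realization above this branching step and $n$ iterations of the $U_1$-realization below each of its first two children, then zeroing out the remaining branches via $0 \in \mathrm{supp}(Z)$ (Lemma~\ref{lem::support-contains-zero}), I arrive at
\[
l_2^m a_1 l_1^n\, z_1 + l_2^m a_2 l_1^n\, z_2 \in \mathrm{supp}(Z), \qquad \forall\, z_1, z_2 \in \mathrm{supp}(Z).
\]
Taking $g_j := l_2^m a_j l_1^n$ for $j = 1, 2$, the same spectral expansion gives
\[
l_2^m a_j l_1^n \;\approx\; r(l_2)^m r(l_1)^n\, \langle u_2, a_j v_1 \rangle\, v_2 \otimes u_1,
\]
with strictly positive leading coefficient since $u_2, v_1 > 0$ and $a_j \neq 0$; this delivers Properties 1 and 3 for both $g_1$ and $g_2$ with a common uniform $\delta$.

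The delicate point is Property 2, namely $r(g_1) \in [1/2, 1)$. Since $r(g_1) = r(l_2)^m r(l_1)^n \langle u_2, a_1 v_1\rangle \langle u_1, v_2\rangle\,(1 + o(1))$ as $\min(m,n) \to \infty$, landing inside the narrow interval $[1/2, 1)$ requires tuning the scalar $r(l_2)^m r(l_1)^n$ and possibly the middle branching choice. I expect the paper to gain the missing flexibility by enriching the middle step, replacing the single matrix $a_1$ with a partial sum $\sum_{i \in I} a_i$ over a suitable subset $I \subset \{1, \ldots, n_0\}$ (and, if necessary, iterating the branching over several levels), so that the attainable coefficient $\langle u_2, (\sum_{i \in I} a_i) v_1 \rangle$ densifies enough to place $r(g_1)$ in $[1/2, 1)$. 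A disjoint sibling subset then yields $g_2$ with bounded scale, and Property 1 follows from the common asymptotic proportionality of all these partial sums to $v_2\otimes u_1$.
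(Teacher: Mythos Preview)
Your strategy matches the paper's almost exactly: sandwich a $J$-fold branching block $g_\eta = a_{\eta(1)}\cdots a_{\eta(J)}$ between a high power of $l_2$ and a high power of $l_1$, then take $g_1$ and $g_2$ to be the contributions of two complementary subsets of paths $\eta$. Your spectral analysis of $l_2^m l_1^n$ via the Perron--Frobenius expansion is precisely how the paper obtains the two-sided bounds in Property~1 and the contraction in Property~3, and your route to Property~4 through the cover-set inclusion~\eqref{equ::support_breaking_cover_set} and $0\in\mathrm{supp}(Z)$ is the paper's as well.

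The one technical device you have not anticipated, and which is the crux of the paper's argument for Property~2, is that the power of $l_1$ is allowed to \emph{depend on the path} $\eta$. Concretely, the paper first fixes a small scale $t>0$ and a branching depth $J$ with $k^J t$ large; then for each $\eta$ it chooses $m(n,\eta,t)$ so that $h_\eta := l_2^n g_\eta l_1^{m(n,\eta,t)}$ satisfies $t\,v_2\otimes u_1 \le h_\eta \le Ct\,v_2\otimes u_1$ for a fixed $C$. This normalization makes every summand the same size up to the factor $C$, so that adding them one at a time along a chain $\emptyset=I(0)\subset I(1)\subset\cdots\subset I(k^J)$ moves the spectral radius in increments bounded by $Ct\langle u_1,v_2\rangle$, and one can stop at the first $i_0$ with $r(\sum_{\eta\in I(i_0)}h_\eta)\ge 1/2$; the smallness of $t$ then forces $r<1$ and guarantees $2\le i_0\le k^J-1$, so that $g_2>0$. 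In your scheme with a \emph{common} exponent $l_1^n$, the summands $l_2^m g_\eta l_1^n$ have sizes proportional to $\langle u_2,g_\eta v_1\rangle$, which can be wildly disparate; the first-crossing argument then gives only $r(g_1)<\tfrac12 + \rho\langle u_1,v_2\rangle\max_\eta\langle u_2,g_\eta v_1\rangle$, and controlling this below $1$ while simultaneously ensuring the total exceeds $1$ (so that $g_2\ne 0$) would require the attainable scalars $\rho=r(l_2)^m r(l_1)^n$ to hit a specific interval whose width you do not control. The path-dependent exponents are exactly what buys uniform step size and hence a clean landing in $[\tfrac12,1)$ together with $\varepsilon$-independent bounds in Property~1 for both $g_1$ and $g_2$.
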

	\begin{proof}
		Since $\mathbb{E}[N] > 1$, we can take $(k, (a_i)_{i \geq 1}) \in \mathrm{supp}(N,(A_i)_{i \geq 1})$ with $k \geq 2$. Let $J \geq 1$ be an integer. Define
		\begin{equation}
			g_\eta = a_{\eta(1)}a_{\eta(2)}\cdots a_{\eta(J)}, \quad \mbox {if } \eta = (\eta(1), \cdots, \eta(J)) \in \{ 1, \cdots, k \}^J. 
		\end{equation}
		
		(1)
		We first prove that: we can find a constant $C > 0$, such that for any $t > 0$, there exist an integer $n_0(t) \geq 1$ and a set of positive integers $\{ m(n, \eta,t) \}$, which satisfy 
		\begin{equation}\label{v2-otimes-u1-le}
			t \,v_2 \otimes u_1  \leq l_2^n g_\eta l_1^{m(n, \eta, t)}\leq Ct \,v_2 \otimes u_1, \quad \forall n \geq n_0(t), \quad \forall \eta \in \{ 1, \cdots, k \}^J.
		\end{equation}
		
		Notice that there exists $c > 1$ such that
		\begin{align}\label{C-1rlinvi}
			c^{-1} r(l_i)^n v_i\otimes u_i \leq l_i^n \leq cr(l_i)^n v_i\otimes u_i, \quad n \geq 1, \quad i = 1, 2.
		\end{align}
		Indeed, 
		let $\delta = (r(l_i) - r(q_i)) / 2 > 0$. Then for large enough $n$, we have $\|q_i^n\|^{1/n} \leq r(q_i) + \delta$. Therefore 
		\begin{equation}\label{equ::ad_hoc_17}
			\lim_{n \to \infty}\frac{\|q_i^n\|}{r(l_i)^n} \leq \lim_{n \to \infty}\left( \frac{ r(q_i) + \delta}{r(l_i)} \right)^n = 0.
		\end{equation}
		Using the fact that $l_i^n = r(l_i)^n v_i \otimes u_i + q_i^n$ for $n \geq 1$, this implies~\eqref{C-1rlinvi}.
		
		It follows from ~\eqref{C-1rlinvi} that, for all $\eta \in \{ 1, \cdots, k \}^J$, $m \geq 1$ and $n \geq 1$,
		\begin{align}\label{l2ng-eta-l1m-less}
			l_2^n g_\eta l_1^m & \leq c^2 r(l_1)^mr(l_2)^n (v_2\otimes u_2) g_\eta (v_1\otimes u_1) \notag \\
			& = c^2 \langle u_2, g_\eta v_1\rangle r(l_1)^mr(l_2)^n v_2\otimes u_1.
		\end{align}
		Similarly, for all $\eta \in \{ 1, \cdots, k \}^J$, $m \geq 1$ and $n \geq 1$,
		\begin{equation}\label{l2ng-eta-l1m-greater}
			l_2^n g_\eta l_1^m \geq c^{-2}\langle u_2, g_\eta v_1\rangle r(l_1)^mr(l_2)^n v_2\otimes u_1.
		\end{equation}
		
		Fix $c > \max(r(l_1)^{-1}, r(l_2))$. Since $r(l_1) < 1 < r(l_2)$, we notice that for any  number $s > 0$ and any integer $n$  with $ r(l_2)^n > cs$, there exists an integer $m \geq 1$ such that
		$$r(l_1)^m r(l_2)^n \in [s, cs]$$ 
		(we can choose the first $m \geq 1$ satisfying  $r(l_1)^m r(l_2)^n \leq cs$). Thus, for all $t > 0$ and $\eta \in \{ 1, \cdots, k \}^J$, there exist an integer $n_0(t, \eta) \geq 1$ and a set of positive integers $\{ m(n, \eta, t) \}_{n \geq n_0(t, \eta)}$, such that
		\begin{equation}\label{u2g-eta-v1}
			r(l_1)^{m(n, \eta, t)} r(l_2)^n \in \Big[\frac{c^2t}{\langle u_2, g_\eta v_1\rangle}, \frac{c^3t}{\langle u_2, g_\eta v_1\rangle}\Big], \quad \forall n \geq n_0(t, \eta), 
		\end{equation}
		by applying the previous inequality with $s = \frac{c^2 t}{\langle u_2, g_\eta v_1\rangle}$.
		Combining \eqref{l2ng-eta-l1m-less}, \eqref{l2ng-eta-l1m-greater} and \eqref{u2g-eta-v1},  and writing   
		$n_0(t) = \max \{ n_0(t,\eta): \eta  \in  \{ 1, \cdots, k \}^J\}$ and  $C = c^5$,   we get that for all $t > 0$, 
		\begin{equation}
			t \, v_2 \otimes u_1 \leq l_2^n g_\eta l_1^{m(n, \eta, t)}\leq C t \, v_2 \otimes u_1, \quad \forall n \geq n_0(t), \quad \forall \eta \in \{ 1, \cdots, k \}^J.
		\end{equation}			
		This ends  the proof of~\eqref{v2-otimes-u1-le}.
		
		(2) We next construct $g_1, g_2$. Fix $t > 0$ and $J \in \mathbb{N}^*$ such that
		\begin{equation}
			0 < t < \frac{1}{2C(1+C)\langle v_2, u_1 \rangle}, \quad k^Jt > \max\big\{ \frac{2}{\langle v_2, u_1 \rangle}, 2 \big\},
		\end{equation}
		and write,  with $\varepsilon >0$ given in the lemma,  
		\begin{equation}
			n = \max\big\{ { \big\lfloor \frac{\log \varepsilon}{\log c(l_2)} \big\rfloor} + 1, n_0(t) \big\}.
		\end{equation}
		Denote by $|I|$ the cardinality of a set $I$. We take a sequence $\{ I(i) \}_{0 \leq i \leq k^J}$ of subsets of $\{ 1, \cdots, k \}^J$ such that $I(0)= \emptyset$, $I(i-1) \subset I(i)$ and $|I(i) \setminus I(i - 1)| = 1$ for all $i = 1, \cdots, k^J$. Therefore $|I(i)|= i$ for all $i=0, 1, \cdots, k^J$. Define
		\begin{align}
			h_\eta & = l_2^n g_\eta l_1^{m(n, \eta, t)}, \quad \forall  \eta \in \{ 1,\cdots k \}^J, \\
			W & = \Big\{ i \geq 1 : r\big(\sum_{\eta \in I(i)} h_\eta \big) \geq \frac{1}{2} \Big\}, \\
			i_0 & = \min\{ \inf W, k^J \},
		\end{align}
		with the convention that $\inf \emptyset = +\infty$. Let
		\begin{align}
			g_1 = \sum_{\eta \in I(i_0)}h_\eta, \quad g_2 = \sum_{\eta \in \{ 1, \cdots, k \}^J \setminus I(i_0)}h_\eta,\quad 
			\delta = \min(t, \frac{1}{Ctk^J}).
		\end{align}
		
		We claim that $2 \leq i_0 \leq k^J - 1$, so that $\emptyset \neq  I(i_0) \neq \{ 1, \cdots, k \}^J$, and thus  
		$g_1 > 0$, $g_2 > 0$. Using~\eqref{v2-otimes-u1-le}, for any subset $I \subset \{ 1, \cdots, k \}^J$, we have   
		\begin{equation}\label{I-t-v2-u1-bound}
			|I| t \, v_2 \otimes u_1 \leq \sum_{\eta \in I} h_\eta \leq C|I|t \, v_2 \otimes u_1.
		\end{equation}
		In particular, taking $I = I(i_0)$ and $I=\{ 1, \cdots, k \}^J \setminus I(i_0)$, we get 
		\begin{equation}\label{I-t-v2-u1-bound-g1-g2}
			i_0 t \, v_2 \otimes u_1 \leq g_1 \leq Ct i_0 \, v_2 \otimes u_1, \quad 
			(k^J- i_0)  t \, v_2 \otimes u_1 \leq g_2 \leq C(k^J- i_0)t  \,  v_2 \otimes u_1. 
		\end{equation}
		Since $k^J t > 2/\langle v_2,u_1\rangle$, we deduce from~\eqref{v2-otimes-u1-le} that 
		\begin{equation}
			r\big(\sum_{\eta \in I(k^J - 1)} h_\eta\big) \geq (|I(k^J)| - 1)t \cdot  r(v_2 \otimes u_1) = (k^J - 1) t \langle v_2,u_1\rangle > \frac{k^J}{2}t \langle v_2,u_1\rangle > 1.   
		\end{equation}
		By the definition of $W$, this implies that $k^J - 1 \in W$.
		Since $t < 1/(2C(C+1)\langle v_2,u_1\rangle)$, we can also deduce from~\eqref{v2-otimes-u1-le} that
		\begin{equation}
			r\big( \sum_{\eta \in I(1)} h_\eta \big) < r(Ct \, v_2 \otimes u_1) < \frac{1}{2}.
		\end{equation}
		By the definition of $W$, this implies that $1 \not \in W$. Thus, by the definition of $i_0$, we see that $2 \leq i_0 \leq k^J-1$, so that $g_1 > 0$, $g_2 > 0$.

		(3) We then prove all the listed properties of $g_1, g_2$ as follows. 
		
		1. By~\eqref{I-t-v2-u1-bound} and the definition of $\delta$,
		\begin{equation}\label{bound-g1}
			\delta \, v_2 \otimes u_1 \leq t \, v_2 \otimes u_1 \leq g_1 \leq k^J C t  \, v_2 \otimes u_1 \leq  \delta^{-1} \, v_2\otimes u_1.
		\end{equation}
		Similarly, the same inequality also holds for $g_2$. 
		
		2. By the definition of $W$ and $i_0$,
		\begin{equation}\label{r-sum-I-i0-greater}
			r(g_1) = r\big( \sum_{\eta \in I(i_0)} h_\eta \big) \geq \frac{1}{2}.
		\end{equation}
		So it remains to show that
		\begin{equation}
			r(g_1) = r\big( \sum_{\eta \in I(i_0)} h_\eta \big) < 1.
		\end{equation} Using 
		\eqref{I-t-v2-u1-bound-g1-g2} and \eqref{r-sum-I-i0-greater}
		we obtain
		\begin{equation}
			\frac{1}{2} \leq r(i_0 C t \, v_2 \otimes u_1) = i_0Ct \langle v_2,u_1\rangle,
		\end{equation}
		which yields that $i_0 \geq 1/(2Ct\langle v_2,u_1\rangle)>1+C$, hence $\frac{C}{i_0-1} < 1$. Thus,
		\begin{align}
			\sum_{\eta \in I(i_0)\setminus I(i_0 - 1)}h_\eta & \leq Ct \, v_2\otimes u_1  = \frac{C}{i_0-1} \cdot (i_0-1) t \, v_2\otimes u_1 <  \sum_{\eta \in I(i_0-1)}h_\eta. 
		\end{align}
		This implies
		\begin{equation}
			\sum_{\eta \in I(i_0)}h_\eta < 2 \sum_{\eta \in I(i_0-1)} h_\eta.
		\end{equation}
		Hence using the fact that $i_0-1 \not \in W$, we derive that
		\begin{equation}\label{r-sum-I-i0-le}
			r(g_1) = r\big( \sum_{\eta \in I(i_0)}h_\eta \big) < 2  \, r\big( \sum_{\eta \in I(i_0-1)} h_\eta \big) < 2\cdot \frac{1}{2} = 1.
		\end{equation}
		
		3. Note that for any $z \in \mathbb{S}_+^{d-1}$, 
		\begin{equation}
			d(g_1 \cdot z, v_2) = d\big( (l_2^n \sum_{\eta \in I(i_0)} g_\eta l_1^{m(n,\eta)})\cdot z, l_2^n \cdot v_2 \big) \leq c(l_2)^n d((\sum_{\eta \in I(i_0)} g_\eta l_1^{m(n,\eta)})\cdot z, v_2) \leq c(l_2)^n, \quad 
		\end{equation}
		Using the definition of $n$ and the fact that $c(l_2) < 1$, we have $d(g_1 \cdot z, v_2) \leq c(l_2)^n \leq \varepsilon$.
		
		4. Using~\eqref{equ::support_breaking_cover_set} and~\eqref{equ::support_breaking_easy_form}, we derive that
		\begin{align}\label{suppZ-supset-l2n-sum}
			\mathrm{supp}(Z) & \supset \{ l_2^n z : z \in \mathrm{supp}(Z) \} \notag \\
			& \supset \bigg\{ l_2^n \sum_{\eta \in \{1, \cdots, k\}^J} g_\eta z_\eta :  z_\eta \in \mathrm{supp}(Z) \bigg\} \notag \\
			& \supset \bigg\{ l_2^n \sum_{\eta \in \{1, \cdots, k\}^J} g_\eta l_1^{m_\eta} z_\eta : m_{\eta } \in \mathbb{N}, z_\eta \in \mathrm{supp}(Z) \bigg\}.
		\end{align}
		In the last display, using the decomposition  $ \{1, \cdots, k\}^J = I(i_0) \cup 
		( \{1, \cdots, k\}^J  \setminus I(i_0))$,  taking $z_\eta = z_1 $ for $\eta \in I(i_0)$ and  
		$z_\eta = z_2 $ for  $\eta \in  \{1, \cdots, k\}^J  \setminus I(i_0)$, and using 
		the definition of $g_1, g_2$,  we see that  \eqref{suppZ-supset-l2n-sum}  implies that
		\begin{equation}
			\mathrm{supp}(Z) \supset \{ g_1 z_1 + g_2 z_2 : z_1, z_2 \in \mathrm{supp}(Z) \}.
		\end{equation}
		

		%
		%


	\end{proof}

	The following technical lemma is about the dyadic expansion with base $\theta \in \left[ \frac{1}{2}, 1\right)$. When $\theta = \frac{1}{2}$, the result is well-known. 
	\begin{lemma}\label{lem::denseness_dyadic_representation}
		For any $\theta \in \left[ \frac{1}{2}, 1 \right)$, we have
		\begin{equation} \label{def-W} 
			\Big\{ \sum_{n \geq 1}\eta_n \theta^n : \eta_n = 0 \text{ or } 1 \Big\} 
			= \Big[0, \frac{\theta}{1-\theta}\Big]. 
		\end{equation}
	\end{lemma}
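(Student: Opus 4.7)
The plan is to prove the two inclusions separately. The direction $\subset$ is immediate: any series of the form $\sum_{n\geq 1}\eta_n\theta^n$ with $\eta_n \in \{0,1\}$ is nonnegative and bounded above by the geometric series $\sum_{n\geq 1}\theta^n = \theta/(1-\theta)$, so the set on the left of \eqref{def-W} is contained in $[0,\theta/(1-\theta)]$.

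For the nontrivial direction $\supset$, I would fix $x \in [0,\theta/(1-\theta)]$ and construct the digits $\eta_n \in \{0,1\}$ by a greedy algorithm. Set $x_0 = x$ and recursively define
\begin{equation*}
    \eta_n = \begin{cases} 1 & \text{if } x_{n-1} \geq \theta^n, \\ 0 & \text{otherwise,} \end{cases} \qquad x_n = x_{n-1} - \eta_n \theta^n.
\end{equation*}
The key claim to prove by induction on $n$ is that $0 \leq x_n \leq \theta^{n+1}/(1-\theta)$. The base case $n=0$ is the hypothesis on $x$. For the inductive step, when $\eta_n = 0$ we have $x_n = x_{n-1} < \theta^n$, and since $\theta \geq 1/2$ we get $\theta^n \leq \theta^{n+1}/(1-\theta)$ (this is equivalent to $\theta/(1-\theta) \geq 1$); when $\eta_n = 1$ we have $0 \leq x_n = x_{n-1} - \theta^n \leq \theta^n/(1-\theta) - \theta^n = \theta^{n+1}/(1-\theta)$.

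Once the induction is established, the bound $|x_n|\leq \theta^{n+1}/(1-\theta) \to 0$ together with the identity $x_n = x - \sum_{k=1}^n \eta_k\theta^k$ yields $x = \sum_{k\geq 1}\eta_k\theta^k$, which places $x$ in the set on the left of \eqref{def-W}.

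The only delicate point is the inductive step, whose success hinges precisely on the hypothesis $\theta \geq 1/2$: this is exactly the threshold at which the greedy intervals $[0,\theta^n)$ and $[\theta^n, \theta^n + \theta^{n+1}/(1-\theta)]$ cover all of $[0,\theta^n/(1-\theta)]$ without gap. For $\theta < 1/2$ the set on the left of \eqref{def-W} would be a Cantor-type set with gaps, so no analogous statement would hold.
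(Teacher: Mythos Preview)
Your proof is correct. Both you and the paper use the greedy algorithm to construct the digits $\eta_n$, but the bookkeeping differs: the paper tracks the partial sums $x_n = \sum_{k=1}^n \eta_k\theta^k$ increasing toward $x$, and then must argue separately that the set $\{n:\eta_n=0\}$ is infinite to force $x_n\to x$ (and also treats the endpoint $x=\theta/(1-\theta)$ by hand). Your version tracks the remainder $x_n = x - \sum_{k=1}^n\eta_k\theta^k$ and proves by a clean induction the invariant $0\le x_n\le \theta^{n+1}/(1-\theta)$, which immediately gives $x_n\to 0$ and handles the endpoint uniformly. The content is the same, but your invariant packages the role of the hypothesis $\theta\ge 1/2$ more transparently and avoids the auxiliary argument about infinitely many zero digits.
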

	\begin{proof} Let $E$ be the set defined by the left hand side of  \eqref{def-W}. 
		Note that $\frac{\theta}{1-\theta} = \sum_{n \geq 1} \theta^n$, so that $E \subset 
		\big[0, \frac{\theta}{1-\theta}\big]$ and  $\frac{\theta}{1-\theta}  \in E$. We now prove the reverse inclusion $
		\big[0, \frac{\theta}{1-\theta}\big] \subset E$.  
		Let  $x \in [0, \frac{\theta}{1-\theta}]$. We will prove that $x \in E$. 
		Since we have seen that  $\frac{\theta}{1-\theta}  
		\in E$,
		we only consider $x \in [0, \frac{\theta}{1-\theta})$.  
		Define $(x_n)_{n \geq 0}$ and $(\eta_n)_{n \geq 1}$ as follows:
		\begin{enumerate}
			\item Let $x_0 = 0$. 
			\item For each $n \geq 1$, suppose that $x_{n - 1}$ is defined.  We will define $x_n$ and  $\eta_n$. If $x_{n - 1} + \theta^n \leq x$, we let $x_n = x_{n - 1} + \theta^n$ and $\eta_n = 1$; otherwise we let $x_n = x_{n - 1}$ and $\eta_n = 0$.
		\end{enumerate}
		
		Then, by reduction, we have 
		$$x_n \leq x \quad  \forall  n \geq 0,  \quad \mbox{ and }  \quad x_n = \sum_{i = 1}^n \eta_i \theta^i 
		\quad   \forall n \geq 1.$$ 
		
		Note that the set $I := \{ n \geq 1 : \eta_n = 0 \}$ is not empty. We claim that $|I| = \infty$, where  $|I| $ denotes the cardinality of $I$. Otherwise, we take the maximal $n \in I$. Then, we have $x \in [x_{n - 1}, x_{n-1}+\theta^n)$. But, 
		\begin{equation}
			x \geq x_{n - 1} + \sum_{ j > n} \theta^j = x_{n - 1} + \frac{\theta}{1-\theta} \cdot \theta^{n} \geq x_{n - 1} + \theta^n.
		\end{equation}
		This contradicts with the fact that $x \in [x_{n - 1}, x_{n-1}+\theta^n)$. So the claim follows. 
		
		As $(x_n)_{n \geq 1}$ is increasing and $x \in [x_{n - 1}, x_{n-1}+\theta^n)$ for $n \in I$, we have
		\begin{equation}
			x = \lim_{n \in I, n \to \infty} x_{n - 1} = \lim_{n \to \infty} x_{n- 1} = \sum_{n \geq 1} \eta_n \theta^n.
		\end{equation}
		So $x \in E$, and the proof is finished. 
	\end{proof}
	
	\begin{remark} By a proof  similar to that of Lemma~\ref{lem::denseness_dyadic_representation}, we can 
		prove  the following more general result  about  the $p$-adic expansion: if $p > 1$ is a real number, then
		\begin{equation}
			\Big\{ \sum_{n \geq 1} \eta_n p^{-n} : \eta_n = 0, 1, \cdots, \lceil p\rceil - 1 \Big\} = \Big[0,  \frac{\lceil p \rceil - 1}{p-1}\Big].
		\end{equation}
		When $p>1$ is an integer, this result is well-known.  Lemma~\ref{lem::denseness_dyadic_representation} concerns the case where $p=1/\theta \in (1, 2]$. 
	\end{remark}

	For the following statement,  
	recall that $v_2$ is the Perron-Frobenius right eigenvector of $l_2$ with unit norm, defined in \eqref{li-rli-vi-otimes-ui-qi}.
	\begin{lemma}\label{lem::contains_one_ray}
		Under Conditions~\ref{cond::conditions_on_N}, \ref{cond::condition_on_mu}, \ref{cond::conditions_on_both_N_and_mu},  we have
		\begin{equation}
			\mathrm{supp}(Z) \supset \mathbb{R}_+v_2 := \{ sv_2 : s \geq 0 \}.
		\end{equation}
	\end{lemma}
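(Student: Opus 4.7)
The plan is to show $sv_2 \in \mathrm{supp}(Z)$ for every $s \geq 0$; since $\mathrm{supp}(Z)$ is closed, it suffices, given $s > 0$ and $\rho > 0$, to find $z \in \mathrm{supp}(Z)$ with $|z - sv_2| < \rho$ (the case $s = 0$ is Lemma~\ref{lem::support-contains-zero}). First I choose $\varepsilon$ with $2s\varepsilon < \rho/2$, apply Lemma~\ref{lem::g1g2} to obtain $g_1, g_2$, and pick $z^* \in \mathrm{supp}(Z) \setminus \{0\}$, $y^* := g_2 z^*$, both in $\mathrm{supp}(Z)$. An induction on $n$ based on $w_{n+1}(\eta) = \eta_1 y^* + g_1 w_n(\eta_2\cdots\eta_{n+1})$, combined with property~4 of Lemma~\ref{lem::g1g2} (applied with $z_1 = w_n(\eta_2\cdots\eta_{n+1})$ and $z_2 = \eta_1 z^*$), yields
\begin{equation*}
w_n(\eta) := \sum_{k=1}^n \eta_k g_1^{k-1} y^* \in \mathrm{supp}(Z), \qquad \eta \in \{0,1\}^n.
\end{equation*}
Since $r(g_1) < 1$, the series converges, and by closedness $w(\eta) := \sum_{k \geq 1} \eta_k g_1^{k-1} y^* \in \mathrm{supp}(Z)$; iterating $z \mapsto g_1 z$ (property~4 with $z_2 = 0$) then gives $g_1^M w(\eta) \in \mathrm{supp}(Z)$ for every $M \geq 0$.

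Next I use the Perron-Frobenius decomposition $g_1^k = \theta^k e(e^*)^T + q^k$ for $k \geq 1$, with $\theta := r(g_1) \in [1/2, 1)$, $e := v(g_1)$, $e^* := u(g_1)$, and $r(q) < \theta$; an inductive check gives $g_1 q^j = q^{j+1}$ for $j \geq 1$. Restricting to $\eta_1 = 0$,
\begin{equation*}
g_1^M w(\eta) = \alpha\, \theta^M \tau(\eta)\, e + \xi_M(\eta),
\end{equation*}
where $\alpha := \langle e^*, y^*\rangle > 0$, $\tau(\eta) := \sum_{k \geq 2} \eta_k \theta^{k-1}$, and $\xi_M(\eta) := \sum_{k \geq 2} \eta_k q^{M+k-1} y^*$. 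By Lemma~\ref{lem::denseness_dyadic_representation}, applicable since $\theta \geq 1/2$, $\tau(\eta)$ covers the entire interval $[0, \theta/(1-\theta)]$. I then pick the largest $M \geq 0$ with $s/(\alpha\theta^M) \leq \theta/(1-\theta)$ and $\eta$ with $\tau(\eta) = s/(\alpha\theta^M)$, so that $g_1^M w(\eta) = s\, e + \xi_M(\eta)$. Property~3 applied to the fixed point $g_1 \cdot e = e$ gives $|e - v_2| \leq 2\varepsilon$, and hence $|g_1^M w(\eta) - sv_2| \leq 2s\varepsilon + \|\xi_M(\eta)\|$.

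To defeat the remainder, I will inflate $z^*$: since $l_2 \in \Gamma$ with $r(l_2) > 1$, \eqref{equ::support_breaking_easy_form} gives $l_2^n z^* \in \mathrm{supp}(Z)$ with $|l_2^n z^*| \to \infty$, so I may take $\alpha$ (and hence $\|y^*\|$, which scales proportionally) arbitrarily large by replacing $z^*$ with $l_2^n z^*$. With this choice, $M \sim \log\alpha/|\log\theta|$, and since $c := \log r(q)/\log\theta > 1$ (because $r(q) < \theta < 1$), one gets $r(q)^M \sim \alpha^{-c}$, whence $\|\xi_M(\eta)\| \lesssim \|y^*\|\, r(q)^M \sim \alpha^{1-c} \to 0$. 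Combined with $2s\varepsilon < \rho/2$, this produces the required approximation. The main obstacle is precisely this remainder estimate: the naive bound on $\xi_M$ grows with $\|y^*\|$, and it is only the spectral gap $r(q) < r(g_1)$ together with the inflation $z^* \mapsto l_2^n z^*$ that makes the overall bound vanish.
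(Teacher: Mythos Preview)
Your argument is correct, apart from one slip: $l_2$ is \emph{not} an element of $\Gamma$, since it is a sum along a cover set rather than a single product; but the invariance $l_2^n z^* \in \mathrm{supp}(Z)$ that you actually need follows directly from~\eqref{equ::support_breaking_easy_form}, so the conclusion stands.

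The route differs from the paper's in the decomposition used to control the error. The paper writes $g_1^{m}(g_1 g_2 z) = g_1^m (t\, v(g_1)) + g_1^m(g_1g_2z - t\,v(g_1))$ with $t=|g_1g_2z|$, bounds the second piece by $\|g_1^m\|\cdot 4C\varepsilon$ via the directional estimate of Lemma~\ref{lem::g1g2}\,(3), lets $\varepsilon\to 0$ to obtain a fixed segment $[0,c_2]v_2\subset\mathrm{supp}(Z)$, and only then stretches by $l_2$. You instead use the exact spectral splitting $g_1^{m} y^* = \theta^m \alpha\, e + q^m y^*$, so the remainder decays at the strictly faster rate $r(q)^m$; this lets you freeze $\varepsilon$ and instead push $\alpha\to\infty$ via $z^*\mapsto l_2^n z^*$, forcing $M\to\infty$ and $\|y^*\|\,r(q)^M \asymp \alpha^{1-c}\to 0$. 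The paper's approach is more modular (segment first, then extend), but must implicitly control how $\sum_m\|g_1^m\|$ and $1/(1-r(g_1))$ behave as $\varepsilon\to 0$; your approach sidesteps that bookkeeping by fixing $g_1$ and exploiting the spectral gap $r(q)<\theta$, at the price of the two-parameter asymptotic $\|y^*\|r(q)^M$ you sketch at the end.
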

	\begin{proof}
		Recall that  there exists $\delta > 0$ such that for any $\varepsilon >0$, there are two matrices $g_1, g_2 > 0$ which satisfy  the four properties  listed in 
		Lemma~\ref{lem::g1g2}. 
		

		(1) We first prove that
		\begin{equation}\label{supp-Z-subset-sum-m=0-n}
			\mathrm{supp}(Z) \supset \bigg\{\sum_{m = 0}^{\infty} \eta_m g_1^m(g_2z) : \eta_m = 0 \mbox{ or } 1,  z \in \mathrm{supp}(Z) \bigg\}.
		\end{equation}
		Since $r(g_1) < 1$, the series in~\eqref{supp-Z-subset-sum-m=0-n} converges for any choice of $\eta_m$ and $z \in \mathrm{supp}(Z)$. 
		In fact we will prove the following  result slightly stronger than 
		\eqref{supp-Z-subset-sum-m=0-n}: 
		\begin{equation}\label{I-n-sum-m=0-n}
			I_n  : = \bigg\{\sum_{m = 0}^{n} \eta_m g_1^m(g_2z_m) : z_m \in \mathrm{supp}(Z), \eta_m = 0\mbox{ or } 1 \bigg\} \subset \mathrm{supp}(Z), \quad \forall n \geq 0.
		\end{equation}
		(Here $z_m$ may vary with $m$, while in  \eqref{supp-Z-subset-sum-m=0-n} $z$ does not depend on $m$.)
		We will prove this by induction. 
		By Lemma~\ref{lem::support-contains-zero}, we have $0 \in \mathrm{supp}(Z)$. 
		{ Combining this with the condition that $  \big\{ g_1 z_1 + g_2z_2 : z_1, z_2 \in \mathrm{supp}(Z) \big\} \subset \mathrm{supp}(Z)$ (cf. Part 4 of Lemma~\ref{lem::g1g2}), }
		we derive $I_0 \subset \mathrm{supp}(Z)$.

		Suppose that $I_n\subset \mathrm{supp}(Z)$ for some $n \geq 0$. Then, for all $\eta_m \in \{0, 1\}$ and $z_m \in  \mathrm{supp}(Z)$ for $m = 0, 1, \cdots, n+1$, we have
		\begin{align}
			\sum_{m = 0}^{n+1} \eta_m g_1^m(g_2z_m) 
			& =   g_1\big( \sum_{m=0}^{n} \eta_{m+1} g_1^m g_2z_{m+1} \big) + \eta_0g_2z_0   \notag \\
			& \in \{ g_1z_1 + g_2 z_2 : z_1 \in I_n,z_2 \in \mathrm{supp}(Z) \} \notag \\& 
			\subset \{ g_1z_1 + g_2 z_2 : z_1 \in \mathrm{supp}(Z),z_2 \in \mathrm{supp}(Z) \} \notag \\
			& \subset \mathrm{supp}(Z),
		\end{align}
		where in the last inclusion we use Part 4 of Lemma~\ref{lem::g1g2}. This implies $I_{n+1} \subset  \mathrm{supp}(Z)$. 
		Therefore, by  induction,  $I_{n} \subset  \mathrm{supp}(Z)$ for all $n \geq 0$. This gives  \eqref{I-n-sum-m=0-n}. 
		Letting $z_m = z \in \mathrm{supp}(Z)$ in~\eqref{I-n-sum-m=0-n} and taking  limit as  $n \to \infty$, we deduce that~\eqref{supp-Z-subset-sum-m=0-n} holds. 
		
		(2) We then prove that $\mathrm{supp}(Z) $ contains a segment. 
		To this end, we will 
		prove that  $\mathrm{supp}(Z) $ is dense in a segment of the form $[0, c] v_2$,
		using Lemma~\ref{lem::denseness_dyadic_representation} with $\theta = r(g_1)$. 
		
		We fix a nonzero vector $z \in \mathrm{supp}(Z)$. 
		Then, by the result proved in (1) above, 
		$\sum_{m = 0}^\infty \eta_m g_1^m(g_1g_2z)  \in \mathrm{supp}(Z)$.   Notice that,  
		with  $t : = |g_1g_2z|$, 
		\begin{align}\label{sum-m-0-infty-eta-m-g-1-m-g1-g2-z-sum-m-0}
			& \left| \sum_{m = 0}^\infty \eta_m g_1^m(g_1g_2z) - \big( \sum_{m=0}^\infty \eta_mr(g_1)^m \big) tv_2 \right| \notag \\
			& \leq \left| \sum_{m = 0}^\infty \eta_m g_1^m(g_1g_2z) - \sum_{m = 0}^\infty \eta_m g_1^m(tv(g_1))\right|   
			+ \left| \sum_{m = 0}^\infty \eta_m g_1^m(tv(g_1)) - \big( \sum_{m=0}^\infty \eta_mr(g_1)^m \big) tv_2 \right| \notag \\
			& \leq \left| \sum_{m = 0}^\infty \eta_m g_1^m(g_1g_2z - tv(g_1)) 
			\right| + \frac{|v(g_1)-v_2|t}{1-r(g_1)}.
		\end{align} 
		
		Using the inequality $d(g_1\cdot w, v_2) < \varepsilon$ for $w \in \mathbb{S}_+^{d-1}$ (cf. Part 3 of Lemma~\ref{lem::g1g2}), we get $d(v(g_1), v_2) = d(g_1 \cdot v(g_1), v_2) < \varepsilon$;  hence by the triangle inequality of the distance $d$, we obtain 
		\begin{equation}\label{d-g-1-cdot-z}
			d(g_1\cdot w, v(g_1)) < 2\varepsilon, \quad \forall w \in \mathbb{S}_+^{d-1}.
		\end{equation}
		Using the fact that $\delta v_2\otimes u_1 \leq g_i \leq \delta^{-1}v_2 \otimes u_1 $ for $i = 1, 2$ (cf. Part 1 of Lemma~\ref{lem::g1g2}), we know  that 
		\begin{equation}\label{t-le-delta--2-v-2}
			t \leq C := \delta^{-2}|v_2|\langle v_2,u_1\rangle \langle u_1,z\rangle.
		\end{equation}
		Then, by the inequality  $|x-y| \leq 2d(x,y)$ for any $x, y\in\mathbb{S}_+^{d-1}$ and~\eqref{d-g-1-cdot-z},
		\begin{equation}\label{g1g2z-tvg1-t}
			|g_1g_2z - tv(g_1)| = t \left| \frac{g_1g_2z}{t} - v(g_1) \right| \leq t \cdot 2d\big( \frac{g_1g_2z}{t}, v(g_1) \big) \leq 4t\varepsilon \leq  4C\varepsilon,
		\end{equation}
		Note that $|v(g_1) - v_2| \le 2d(v(g_1), v_2) < 2\varepsilon$. Plugging~\eqref{g1g2z-tvg1-t} into~\eqref{sum-m-0-infty-eta-m-g-1-m-g1-g2-z-sum-m-0}, we get
		\begin{align}\label{sum-m-0-infty-eta}
			& \left| \sum_{m = 0}^\infty \eta_m g_1^m(g_1g_2z) - \big( \sum_{m=0}^\infty \eta_mr(g_1)^m \big) tv_2 \right| \notag \\
			&\leq \big( \sum_{m=0}^{\infty}\|g_1^m\|\big) 4C\varepsilon + \frac{|v(g_1)-v_2|t}{1-r(g_1)} \notag \\
			& \leq \bigg( \big( \sum_{m=0}^{\infty}\|g_1^m\|\big) 4C + \frac{2C}{1-r(g_1)} \bigg)\varepsilon  =: C_1 \varepsilon,
		\end{align}
		where $\sum_{m = 0}^\infty \|g_1^m\| < +\infty$  because $r(g_1) = \lim_{n}\|g_1^m\|^{1/m} < 1$. 
		
		Note that 
		\begin{equation}
			t = |g_1g_2z| \geq \delta^2 |v_2|\langle v_2,u_1\rangle\langle u_1,z\rangle =: c_1.
		\end{equation}
		By Lemma~\ref{lem::denseness_dyadic_representation} and the condition that  $r(g_1) \in [\frac{1}{2},1)$ (cf. Lemma~\ref{lem::g1g2}(2)), 
		\begin{equation}
			\big\{ \big( \sum_{m=0}^\infty \eta_mr(g_1)^m \big) tv_2 : \eta_m = 0\mbox{ or } 1 \big\}  = \big[0,\frac{t}{1-r(g_1)}\big] v_2 \supset [0,c_2] v_2,
		\end{equation}
		where $c_2 := c_1/(1-r(g_1)) > 0$. Using~\eqref{sum-m-0-infty-eta}, { we deduce from $\sum_{m = 0}^\infty \eta_m g_1^m(g_1g_2z)  \in \mathrm{supp}(Z)$} that for any $x \in [0,c_2]v_2$, we have $B(x, C_1\varepsilon) \cap \mathrm{supp}(Z) \ne \emptyset$. 
		Since $ \mathrm{supp}(Z) $ is closed and $\varepsilon > 0$ is arbitrary, this implies that $x \in \mathrm{supp}(Z)$.
		Therefore 
		\begin{equation} \label{segment0c2v2} 
			[0,c_2]v_2 \subset \mathrm{supp}(Z).
		\end{equation}
		
		(3) Using~\eqref{equ::support_breaking_easy_form}  (with $l=l_2$) repeatedly, we derive from~\eqref{segment0c2v2} that $[0, c_2 r(l_2)^n] \subset \mathrm{supp}(Z)$ for all $n \geq 1$. If follows that $\mathbb{R}_+ v_2 \subset \mathrm{supp}(Z)$. 
	\end{proof}
	
	We will need the following lemma about the strict monotonicity of the spectral radius  to prove Theorem~\ref{thm::main_theorem}.
	
	\begin{lemma}[Monotonicity of the spectral radius]\label{lem::comparison_spectral_radius}
		Let $a, b$ be two nonnegative $d \times d$ matrices.
		If $a + b > 0$ and $b \neq 0$,
		then $r(a) < r(a + b)$.
	\end{lemma}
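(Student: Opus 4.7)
My plan is to exploit the strict positivity of $a+b$, which makes it a primitive matrix, so by Perron-Frobenius it has a simple dominant eigenvalue $r := r(a+b) > 0$ with a strictly positive right eigenvector. The weak monotonicity $r(a) \leq r(a+b)$ is immediate from the entrywise inequality $(a+b)^n \geq a^n \geq 0$ combined with $r(M) = \lim_n \|M^n\|^{1/n}$. The goal is to upgrade this to strict inequality by contradiction: assume $r(a) = r$ and derive an impossibility.

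Under the assumption $r(a) = r$, the standard Perron theorem for general nonnegative matrices (without any irreducibility hypothesis) provides a nonzero nonnegative eigenvector $w \geq 0$, $w \neq 0$, with $a w = r w$. From this I obtain $(a+b) w = r w + bw$ and split into two cases depending on whether $b w$ vanishes. In the case $bw = 0$, the identity $(a+b) w = r w$ combined with $a+b > 0$ strictly and $w \geq 0$, $w \neq 0$, forces $(a+b) w$ to be strictly positive, hence $r w > 0$, hence $w > 0$ strictly; but then $bw = 0$ with $b \geq 0$ and $w > 0$ forces $b = 0$, contradicting $b \neq 0$.

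In the remaining case $bw \neq 0$, I use strict positivity of $a+b$ again: since $bw \geq 0$ and $bw \neq 0$, the vector $(a+b)(bw)$ is componentwise strictly positive, and because $w$ is a bounded nonnegative vector there exists $\eta > 0$ with $(a+b)(bw) \geq \eta w$ entrywise. A direct computation then gives
\[
(a+b)^2 w \;=\; (a+b)(r w + bw) \;=\; r^2 w + r\, bw + (a+b)(bw) \;\geq\; (r^2 + \eta)\, w .
\]
Applying the Collatz--Wielandt lower bound (if $M \geq 0$ and $Mx \geq \mu x$ for some $x \geq 0$, $x \neq 0$, then $r(M) \geq \mu$) to $M = (a+b)^2$ and $x = w$ yields $r(a+b)^2 \geq r^2 + \eta > r^2$, contradicting $r(a+b) = r$.

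The principal obstacle is that $a$ itself is not assumed irreducible, so its Perron eigenvector $w$ need not be strictly positive; this is exactly what necessitates the case split. The strict positivity of $a+b$ is used twice and is the key structural input: first to upgrade $w$ to a strictly positive vector in Case~1, and second to produce the quantitative gain $\eta > 0$ in Case~2 via $(a+b)(bw) > 0$.
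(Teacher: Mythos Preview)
Your proof is correct. Both cases are handled cleanly: in Case~1 the strict positivity of $(a+b)w$ forces $w>0$, whence $bw=0$ with $w>0$ kills $b$; in Case~2 the inequality $(a+b)^2 w \geq (r^2+\eta)w$ combined with the Collatz--Wielandt lower bound (valid for any nonnegative matrix, via $\|M^n\|_1 \geq \mu^n$ from $M^n x \geq \mu^n x$) gives the contradiction $r^2 = r((a+b)^2) \geq r^2+\eta$.

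The paper's argument is different and shorter: it simply observes that $(a+b)^3 \geq a^3 + (a+b)b(a+b) > a^3$ entrywise strictly (the middle term is strictly positive because $a+b>0$ and $b\neq 0$), and then invokes the textbook fact that a strict entrywise inequality $C>D\geq 0$ implies $r(C)>r(D)$, together with $r(M^3)=r(M)^3$. So the paper reduces the lemma to the standard strict comparison result by cubing, whereas you prove the strict inequality from scratch via an eigenvector argument and case split. Your route is more self-contained (it does not defer to the $C>D\Rightarrow r(C)>r(D)$ lemma, which itself is typically proved by an eigenvector argument much like yours); the paper's route is terser but leans on that background fact. Note, incidentally, that your Case~2 computation is essentially the same mechanism as the paper's, just at the level of the square rather than the cube: multiplying once more by $a+b$ is what turns the possibly sparse vector $bw$ into something strictly positive that dominates a multiple of $w$.
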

	\begin{proof}
		Note that $(a + b)^3 \geq a^3 + (a+b)b(a+b) > a^3$, which implies that $r((a+b)^3) > r(a^3)$. Since $r(a^n) = r(a)^n$ for $n \geq 1$, we get $r(a+b) > r(a)$. 
	\end{proof}
	
	We are ready to prove Theorem~\ref{thm::main_theorem}.
	\begin{proof}[Proof of Theorem~\ref{thm::main_theorem}]
		
		We fix a cover set $U \subset \mathbb{U}$ with finite level, and a family  $(a_{ui})_{i \geq 1} \in \mathrm{supp}((A_i)_{i \geq 1})$, $u \in \mathbb{U}$. Denote
		\begin{align}
			g_u & = a_{u|1}\cdots a_{ u| |u|}, \quad \forall u \in \mathbb{U}, \\
			g_{V} & = \sum_{u \in V} g_{u}, \quad \forall V \subset \mathbb{U}. 
		\end{align} 
		
		(1) We first prove that the support contains many rays. From Lemma~\ref{lem::support-contains-zero}, we know that $0 \in \mathrm{supp}(Z)$. With \eqref{equ::support_breaking_cover_set}, this implies that for $V \subset U$, 
		\begin{equation}\label{equ::g-V-z-in-supp-Z}
			g_{V} z \in \mathrm{supp}(Z), \quad \forall z \in \mathrm{supp}(Z).
		\end{equation}
		
		From Lemma~\ref{lem::contains_one_ray}, we know that $\mathbb{R}_+ v_2 \subset \mathrm{supp}(Z)$. Using this and ~\eqref{equ::g-V-z-in-supp-Z}, we see that  for any $V \subset U$, we have $\mathbb{R}_+ (g_V^nv_2)  \subset \mathrm{supp}(Z)$. When $g_V > 0$, we have $d(g_V^n \cdot v_2, v(g_V)) 
		= d(g_V^n \cdot v_2,  g_V^n \cdot v(g_V)) \leq c(g_{V})^n
		\to 0$ as $n \to \infty$. Hence we get
		\begin{equation}\label{mathbb-R-v-gV-in-supp}
			\mathbb{R}_+ v(g_V) \subset \mathrm{supp}(Z), \quad \forall V \subset U \mbox{ such that } g_V > 0.
		\end{equation}
		
		(2) We then establish an inclusion relation of the form $\sum_{ V \subset U} I_{V, n}\subset \mathrm{supp}(Z)$, $n \geq 1$, where $I_{V, n}$ will be defined below, which satisfies $\mathbb{R}_+ v(g_V) \subset I_{V, n}$ for each $V \subset U$ such that $g_V > 0$. From this, we establish the first statement of Theorem~\ref{thm::main_theorem}. 
		
		Let $\tilde{U} = \{ u \in U : g_u \ne 0 \}$. Note that $\tilde{U}$ is a finite set, since $N < \infty$ a.s. 
		For any $V \subset \tilde{U}$ such that $g_V > 0$,  and any $n \geq 1$, we define
		\begin{equation}
			I_{V,n} := \bigg\{ \sum_{\substack{u^i \in \tilde{U}, 1\leq i\leq n \\ \cup_i \{ u^i \} = V}} g_{u^1}g_{u^2}\cdots g_{u^n} z_{u^1\cdots u^n} : z_{u^1\cdots u^n} \in \mathrm{supp}(Z) \bigg\}.
		\end{equation}
		Using~\eqref{equ::support_breaking_cover_set} and the fact that $0 \in \mathrm{supp}(Z)$, we know that  $I_{V,n} \subset \mathrm{supp}(Z)$ for any $n \geq 1$ and $V \subset \tilde{U}$. 
		
		Note that  for $n \geq 1$, 
		\begin{equation}
			g_V^n = {\color{black} \Big( \sum_{u \in V} g_u \Big)^n} =  \sum_{\substack{u^i \in V, 1\leq i \leq n}} g_{u^1}\cdots g_{u^n}, \quad \forall V \subset \tilde{U}.
		\end{equation}
		Using this together with the definition of $I_{V,n}$ and the fact that $\mathbb{R}_+v(g_V) \subset  \mathrm{supp}(Z)$ (see~\eqref{mathbb-R-v-gV-in-supp}), we have for any $n \geq 1$, $s \geq 0$, $V \subset \tilde{U}$,
		\begin{align}\label{gVn-svgv}
			&g_V^n (sv(g_V)) - \sum_{\substack{u^i \in V, 1\leq i \leq n \\ \cup \{ u^i \} \subsetneqq V}} g_{u^1}\cdots g_{u^n} (sv(g_V)) \nonumber  \\ 
			&=   \sum_{\substack{u^i \in V, 1\leq i\leq n \\ \cup_i \{ u^i \} = V}} g_{u^1}\cdots g_{u^n}  (sv(g_V)) 
			=: v_n (V)   	\in I_{V,n}.
		\end{align}
		Here and after,  $A \subsetneqq B$ means that $A$ is a proper subset of $B$ (i.e. $A \subset B$, $A \ne B$).
		Then, 
		\begin{align}
			0 & \leq \sum_{\substack{u^i \in V, 1\leq i \leq n \\ \cup \{ u^i \} \subsetneqq V}} g_{u^1}\cdots g_{u^n} (sv(g_V)) \notag \\& \leq \sum_{W \subsetneqq V} \sum_{\substack{u^i \in W, 1\leq i \leq n}} g_{u^1}\cdots g_{u^n} (sv(g_V)) \notag \\
			& = \sum_{W \subsetneqq V} g_W^n sv(g_V).
		\end{align}
		By the definition of $\tilde{U}$ and the fact that $V \subset \tilde{U}$, we know that 
		for $W \subsetneqq V$, we have 
		$g_V - g_W \geq 0$  and  $g_V - g_W \neq 0$. 
		It follows from Lemma~\ref{lem::comparison_spectral_radius} that $r(g_W) < r(g_V)$. For each $t \geq 0$, let $s = s(n, t, V) = \frac{t}{r(g_V)^n}$. Using the fact that $r(g_W) < r(g_V)$ for $W \subsetneqq V$, we see that $\frac{\|g_W^n\|}{r(g_V)^n} \to 0$;  hence
		\begin{equation}
			\sum_{W \subsetneqq V} g_W^n sv(g_V) = \sum_{W \subsetneqq V} \frac{g_W^n t}{r(g_V)^n}v(g_V) \to 0, \quad n \to \infty.
		\end{equation}
		Using this 
		and \eqref{gVn-svgv} with $s = \frac{t}{r(g_V)^n}$, we derive that for each $t \geq 0$, 
		as $n\rightarrow \infty$, 
		\begin{equation}
			tv(g_V) - v_n (V) \rightarrow 0, \mbox{ where }  v_n (V)  \in   I_{V, n}.  
		\end{equation}
		
		By~\eqref{equ::support_breaking_cover_set},
		\begin{align}\label{sum-V-subset-tilde-U}
			\sum_{V \subset \tilde{U}} I_{V,n} & = \sum_{V \subset \tilde{U}} \bigg\{ \sum_{\substack{u^i \in \tilde{U}, 1\leq i\leq n \\ \cup_i \{ u^i \} = V}} g_{u^1}g_{u^2}\cdots g_{u^n} z_{u^1\cdots u^n} : z_{u^1\cdots u^n} \in \mathrm{supp}(Z) \bigg\} \notag \\
			& = \bigg\{ \sum_{\substack{u^i \in \tilde{U}, 1\leq i\leq n}} g_{u^1}g_{u^2}\cdots g_{u^n} z_{u^1\cdots u^n} : z_{u^1\cdots u^n} \in \mathrm{supp}(Z) \bigg\} \notag \\
			& \subset \mathrm{supp}(Z).
		\end{align}
		Notice that, since $\tilde{U}$ is finite, we have, for each 
		sequence  $ \{ t_V : V \subset \tilde{U} \}$ in $\mathbb R_+$,   as  $n \rightarrow \infty$,  
		\begin{align}
			\sum_{\substack{V \subset \tilde{U}, g_V > 0 }} t_V v(g_V) 
			- \sum_{V \subset \tilde{U}, g_V > 0} v_n(V)  \rightarrow 0, 
		\end{align}
		where (from   \eqref{sum-V-subset-tilde-U})  the second term in the left hand side is in 
		$\mathrm{supp}(Z)$.  Since $\mathrm{supp}(Z)$ is closed, this implies that 
		\begin{equation}\label{sum-stack-v}
			\sum_{\substack{V \subset \tilde{U}, g_V > 0}} \mathbb{R}_+ v(g_V) \subset \mathrm{supp}(Z).
		\end{equation}

		(3) Assume Condition~\ref{cond::conditions_on_alpha} holds with $\alpha = 1$ {   and $\mathbb{E}[|Z|] < \infty$}. We then prove that $\mathrm{supp}(Z) \subset \overline{H}$. Recall that
		\begin{equation}
			H := \{ s_1v_1 + \cdots + s_nv_n : n \geq 1, s_i \geq 0, v_i \in \Lambda\}.
		\end{equation}	
		From~\cite[Proposition 3.1]{BurDaGuiMent14} and the condition  that $\alpha = 1$, there exists a probability measure $\nu$ on $\mathbb{S}_+^{d-1}$ with support in $\Lambda$, such that for any bounded measurable  function $f: \mathbb{S}_+^{d-1} \rightarrow \mathbb R_+$,
		\begin{equation}\label{int-E-A1-x-f-A1-x}
			\int \mathbb{E}\Big[ \sum_{i = 1}^N|A_i x| f(A_i \cdot x)\Big] \nu(dx) = \int f(x) \nu(dx), 
		\end{equation}
		where $ g \cdot x = \frac{ gx }{  | gx | }$ denotes the direction of $gx$, for $g \in G$ and $x \in  \mathbb{S}_+^{d-1}$. 
		Substituting $f$ in~\eqref{int-E-A1-x-f-A1-x} with the identity function on $\mathbb{S}_+^{d-1}$, we have
		\begin{align}
			\mathbb{E}[\sum_i A_i] \big(\int x\nu(dx)\Big) = \int x\nu(dx).
		\end{align}
		This implies that
		\begin{equation}
			v := \int x\nu(dx)
		\end{equation} is a Perron-Frobenius eigenvector of $\mathbb{E}[\sum_i A_i]$. Since $\nu$ is supported on $\Lambda$, there exists a sequence $\{ \nu_n \}_{n \geq 1}$ of measures on $\Lambda$, such that $\mathrm{supp}(\nu_n)$ is a finite set for each $n \geq 1$, and $\nu_n$ converges weakly to $\nu$. So $\int x \nu_n(dx) \in H$ by the definition of $H$, so that 
		$$v = \lim_{n \to \infty} \int x \nu_n(dx) \in \overline{H}. $$
		
		Let $(A_{ui})_{i \geq 1}, u \in \mathbb{U},$ be  i.i.d. copies of  $(A_i)_{i \geq 1}$, and let 
		\begin{equation}
			G_u = A_{u|1}\cdots A_{u| |u|}.
		\end{equation}
		Since $v$ is a Perron-Frobenius right eigenvector of $\mathbb{E}[\sum_i A_i]$, from ~\cite[Theorem 2.3]{BurDaGuiMent14} and its proof, we know that there exists $c > 0$ such that 
		\begin{equation}
			Z \overset{\mathcal{L}}{=} c\lim_{n \to \infty} \sum_{ |u| = n} G_u v.
		\end{equation}
		From~\cite[Lemma 4.2]{BurDaGuiMent14}, for any $a \in \Gamma$, $w \in \Lambda$, we have $a\cdot w \in \Lambda$. Thus, we deduce from $v \in \overline{H}$ that a.s.  $G_u v \in \overline{H}$ for all $u \in \mathbb{U}$, hence $\mathrm{supp}(Z) \subset \overline{H}$.
		
		{ (4) In order to show $\mathrm{supp}(Z) \subset H$, it suffices to prove that $H \subset \mathbb{R}^d$ is closed. We will prove that $H = H_d$ and $H_d$ is closed. Recall that $$H_d = \{ s_1v_1 + \cdots + s_dv_d :  s_i \geq 0, v_i \in \Lambda, \forall i = 1, \cdots, d \}.$$
			
			To prove $H = H_d$, we will use Carathéodory's theorem in convex analysis (see~\cite{rockafellar2015convex}): if a point $x$ lies in the convex hull of a set $P \subset \mathbb{R}^{d - 1}$, then $x$ can be written as the convex combination of at most $d$ points in $P$. 
			By definitions of $H$ and $H_d$, we only need to  prove that $H \subset H_d$. 
			Let $v = s_1 v_1 + \cdots +s_n v_n $ be an arbitrary element in $H$, where $s_i \geq 0$, $v_i \in \Lambda$.  If $n \leq d$, then 
			$v \in H_d$ by the definition of $H_d$  (since the coefficient may be $0$).  
			Suppose that $n>d$. Then $v/ | v | $  is a convex combination of  the $n$ elements $v_1, \cdots, v_n \in \Lambda$. 
			Since $\Lambda \subset \mathbb{S}_+^{d-1}$ lies in the hyperplane with dimension $d - 1$ defined by the equation $x_1 + \cdots + x_d =1$, by Carathéodory's theorem it follows  that 
			$v/ |v|$ is a combination of at most $d$ elements   $v_1', \cdots, v_d' \in \Lambda$.   		 
			Therefore $v/ |v| \in H_d$, and hence $v \in H_d$.  This shows that    $H \subset H_d$, hence $H = H_d$.
			
			We now show that $H_d$ is closed. Let $(v^{(i)})_{i \ge 1}$ be a convergent sequence in $H_d$, and by the definition of $H_d$, write $v^{(i)} = \sum_{j=1}^ds^{(i)}_jv^{(i)}_j$ with $s^{(i)}_j \geq 0$, $v^{(i)}_j \in \Lambda$ for $i \geq 1$, $j = 1, \cdots, d$. Since $\Lambda \subset \mathbb{S}^{d-1}$ is closed and $\mathbb{S}^{d-1}$ is compact, we know $\Lambda$ is also compact. Note that $\max_{i\ge 1, j=1, \cdots d}({s^{(i)}_j}) \leq \max_{i \geq 1} |v^{(i)}| < +\infty$. By compactness, there exists a subsequence $(i_k)_{k\geq 1}$ of $\mathbb N^*$, such that for all $j = 1, \cdots, d$, the limits $s_j := \lim_{k\to \infty}s^{(i_k)}_j$ and $v_j := \lim_{k \to \infty}v^{(i_k)}_j$ exist, with   $s_j \in \mathbb{R}_+$ and $v_j \in \Lambda$. This implies that $\lim_{i \to \infty}v^{(i)} = \sum_{j = 1}^d s_jv_j \in H_d$, so $H_d$ is closed. 
		}
	\end{proof}

	\begin{proof}[Proof of Theorem~\ref{thm::second_theorem}]
		(1) { We first prove~\eqref{overline-D-supp-Z-overline-H}. Part 2 of Theorem~\ref{thm::main_theorem} already shows that $\mathrm{supp}(Z) \subset H$, so it suffices to show $D \subset \mathrm{supp}(Z)$.   Let $n \in \mathrm{supp}(N)$. Let $g_1, \cdots, g_n \in \Gamma$ with $g_i > 0$ for $i = 1, \cdots n$. By the definition of $\Gamma$, for $i = 1, \cdots, n$, there exist $n_i \geq 1$ and $a_1^{(i)}, \cdots, a_{n_i}^{(i)} \in \mathrm{supp}(\mu)$, such that $g_i = a_1^{(i)}\cdots a_{n_i}^{(i)}$. Let $k(i) = n_i$ for $i = 1, \cdots, n$, and $k(i) = 1$ for $i > n$. Then $U := \{ u \in \mathbb{U} : |u| = k(u | 1) \}$ is a cover set with finite level. 
			
			Choose $V := \{ u \in U : u = (i, 1, \cdots, 1) \in (\mathbb{N}^*)^{n_i}, 1 \leq i \leq n \}$. For $i = 1, \cdots, n$, $j = 1, \cdots, n_i$, set $a_{u|j} = a^{(i)}_j$ if $u \in V$ satisfies $u | 1 = i$. By Condition~\ref{cond::conditions_on_independence}, conditioned on $\{N = n \}$, the random matrices $A_1, \cdots, A_n$ are i.i.d. with law $\mu$, which implies that $(n, a_1^{(1)}, \cdots, a_1^{(n)}, 0, \cdots) \in \mathrm{supp}(N, A_1, A_2, \cdots)$.  Then, by Part 1 of Theorem~\ref{thm::main_theorem}, with $g_u := a_{u|1} \cdots a_{u | |u|}$, 
			\begin{equation*}
				\sum_{i = 1}^n \mathbb{R}_+ v(g_i) = \sum_{u \in V} \mathbb{R}_+ v(g_u) \subset \mathrm{supp}(Z).
			\end{equation*}
			Since the choice of $(g_i)_{1 \le i \le n}$ is arbitrary, we derive that $D \subset \mathrm{supp}(Z)$.

			%
			
			(2) 
			Suppose that $d \leq \mathrm{esssup}(N) \leq \infty$. Since $H_d \subset D \subset H = H_d$ (the last equality is from the proof of Theorem~\ref{thm::main_theorem}), we derive from $D \subset \mathrm{supp}(Z) \subset H$ that $\mathrm{supp}(Z) = H = H_d = D$. 
			
		}
		
		
	\end{proof}
	
	\subsection{Proof of Theorem~\ref{thm::third_theorem}  }
	
	To prove Theorem~\ref{thm::third_theorem}, we will use the following proposition proved in~\cite{Ment16}. 
	\begin{proposition}[{{\cite[Theorem 6.1]{Ment16}}}]\label{prop::from_Ment16}
		Under the hypothesis of Theorem~\ref{thm::third_theorem} with $\alpha \in (0,1)$, there exists a probability measure $\nu^\alpha$ supported on
		$\Lambda$
		such that 
		\begin{equation}
			\lim_{r \to \infty} \frac{\mathbb{P}\big[\frac{Z}{|Z|} \in \cdot, |Z| > sr\big]}{\mathbb{P}[|Z| > r]} = s^{-\alpha}\nu^{\alpha}, \quad \forall s > 0.
		\end{equation}
	\end{proposition}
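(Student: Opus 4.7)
\textbf{Proposal for Proposition~\ref{prop::from_Ment16}.} This proposition is a multivariate Kesten--Goldie-type tail asymptotic for the solution $Z$; the proof in~\cite{Ment16} uses multivariate implicit renewal theory combined with the spectral theory of products of nonnegative random matrices. My plan has three main steps.

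First I would set up the $\alpha$-shifted transfer operator on a H\"older space of functions on $\mathbb{S}_+^{d-1}$,
\[
(P^\alpha f)(x) = \mathbb{E}\big[\,|A_1^T x|^\alpha\, f(A_1^T \cdot x)\,\big], \qquad x \in \mathbb{S}_+^{d-1}.
\]
Under Conditions~\ref{cond::condition_on_mu} and~\ref{cond::condition_Ment16}, the spectral gap results of Hennion~\cite{HennionLimit97} and~\cite[Proposition~3.1]{BurDaGuiMent14} give that $P^\alpha$ admits a simple dominant eigenvalue $\kappa(\alpha) = 1/\mathbb{E}[N]$ (using Condition~\ref{cond::conditions_on_alpha}) with a strictly positive eigenfunction $e^\alpha$ and a unique dual probability eigenmeasure $\nu^\alpha$ supported in $\Lambda$. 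The condition $m'(\alpha) < 0$ ensures that the $\alpha$-tilted Markov random walk on $\mathbb{R}\times\mathbb{S}_+^{d-1}$ driven by $(\log|A^T x|, A^T\cdot x)$ has strictly positive radial drift.

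Second I would convert the fixed-point equation~\eqref{equ::smoothing_transform} into a Markov renewal equation. For $x \in \mathbb{S}_+^{d-1}$ define $f(t,x) = e^{\alpha t}\,\mathbb{P}[\langle x, Z\rangle > e^t]$. Iterating~\eqref{equ::smoothing_transform} and applying Goldie's smoothing yields an equation $f = \psi + Q^\alpha \ast f$ on $\mathbb{R}\times \mathbb{S}_+^{d-1}$, where $Q^\alpha$ is the $\alpha$-tilted Markov kernel and $\psi$ is a directly Riemann integrable forcing term (its integrability relying on Condition~\ref{cond::condition_Ment16}). Applying Kesten's Markov renewal theorem on $\mathbb{R}\times\mathbb{S}_+^{d-1}$, using the spectral gap of $P^\alpha$, the positive drift, and non-arithmeticity of the additive component, one obtains $f(t,x) \to c(x)$ as $t\to\infty$, i.e.\ $\mathbb{P}[\langle x, Z\rangle > r] \sim c(x) r^{-\alpha}$, uniformly in $x \in \mathbb{S}_+^{d-1}$. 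Finally, a Cram\'er--Wold / Stone--Weierstrass argument upgrades the uniform one-dimensional tails into the joint vague convergence of $r^\alpha\,\mathbb{P}[|Z|>sr,\; Z/|Z|\in\cdot]$ to $s^{-\alpha}\nu^\alpha(\cdot)$; the angular limit is identified with $\nu^\alpha$ because only $P^\alpha$-harmonic directions survive the renewal limit, and these live in $\Lambda$.

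The main obstacle is the Markov renewal step: one must verify that Condition~\ref{cond::condition_Ment16} is simultaneously strong enough to (i) place the smoothing residual $\psi$ in the directly Riemann integrable class, (ii) ensure that $P^\alpha$ has a spectral gap on a function space rich enough to contain the tail profile $t\mapsto f(t,\cdot)$, and (iii) imply non-arithmeticity of the additive component $\log|A^T x|$ under the tilted kernel. Each point is delicate in the matrix setting and requires the full Hennion / Guivarc'h--Le Page machinery rather than the scalar Laplace-transform estimates of Goldie's original argument.
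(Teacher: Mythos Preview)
The paper does not prove this proposition; it is quoted verbatim from~\cite[Theorem~6.1]{Ment16} and used as a black box in the proof of Theorem~\ref{thm::third_theorem}. There is therefore nothing in the present paper to compare your proposal against.

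That said, your sketch is a reasonable high-level summary of the strategy actually carried out in~\cite{Ment16}: the spectral gap for the $\alpha$-shifted transfer operator on $\mathbb{S}_+^{d-1}$, the reduction to a Markov renewal equation via the fixed-point relation, and the application of a Kesten-type renewal theorem are indeed the core ingredients. One point to flag: in~\cite{Ment16} the analysis is organized around the Laplace transform of $Z$ rather than directly around the tail probabilities $\mathbb{P}[\langle x,Z\rangle>r]$, and the identification of the angular limit with the eigenmeasure $\nu^\alpha$ supported on $\Lambda$ comes from the structure of the fixed-point characterization rather than from a separate Cram\'er--Wold step. Your obstacle list is accurate; Condition~\ref{cond::condition_Ment16} is precisely the moment hypothesis in~\cite{Ment16} that makes the renewal-theoretic estimates go through.
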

	
	
	\begin{proof}[Proof of Theorem~\ref{thm::third_theorem}]
		
		(1) We first prove that there exists a matrix $g$ such that $g > 0$, $r(g) < 1$ and $\{ gz : z \in \mathrm{supp}(Z) \} \subset \mathrm{supp}(Z)$. Since $m'(\alpha) < 0$ and 
		$m(\alpha) = 1$ , we know there exists $s \in (\alpha, 1)$ such that $m(s) < 1$. Let $\delta = (1 - m(s))/2$. Then, by the definitions of $\kappa$ and $m$, there exists $n' \geq 0$ such that for all integer $n \geq n'$, we have
		\begin{equation}
			(\mathbb{E}[N])^n \int \|a\|^{s} d\mu^n (da) < (1-\delta)^n,
		\end{equation}
		where $\mu^n$ is the law of the product of $n$ i.i.d. random matrices with law $\mu$. 
		Let $(A_{u1},A_{u2},\cdots), u \in \mathbb{U}$ be i.i.d. copies of $(A_1,A_2,\cdots)$. Using the subadditivity of $x \mapsto x^s$ and the definition of $\mu$, we know that for $n \geq n'$,
		\begin{equation}
			\mathbb{E}\big[ \big( \sum_{|u| = n} \|G_u\| \big)^s \big] \leq \mathbb{E}\big[ \sum_{|u| = n} \|G_u\|^{s} \big] = (\mathbb{E}[N])^n \int \|a\|^{s} d\mu^n (da) < (1-\delta)^n,
		\end{equation}
		where $G_u := A_{u|1} \cdots A_{ u | |u|}$. This implies that for each $\varepsilon > 0$, there exist $n_1 \geq 0$ and $(a_{u1}, a_{u2}, \cdots)\in \mathrm{supp}(A_1, A_2, \cdots)$ for $u \in \mathbb{U}$,  such that $b := \sum_{|u| = n_1} a_{u|1} \cdots a_{u |n_1}$ satisfies $\|b\| < \varepsilon$. 
		Using~\eqref{equ::support_breaking_easy_form}, we know that $\{ bz : z\in \mathrm{supp}(Z) \} \subset \mathrm{supp}(Z)$. By Condition~\ref{cond::condition_on_mu} and \eqref{equ::support_breaking_easy_form}, there exists a matrix $h > 0$ such that $\{ hz : z\in \mathrm{supp}(Z) \} \subset \mathrm{supp}(Z)$. Choose $\varepsilon < 1/\|h\|$ and let $g = hb$. If follows that $g > 0$, $r(g) \leq \|g\| < 1$ and $\{ gz : z\in \mathrm{supp}(Z) \} \subset \mathrm{supp}(Z)$. This proves the claim. 
		
		
		(2) We next prove that $\mathbb{R}_+ v(g) \subset \mathrm{supp}(Z)$. For any $\varepsilon > 0$, we know from Proposition~\ref{prop::from_Ment16} that
		\begin{equation}\label{lim-r-infty-frac-1-1+epsilon}
			\lim_{r \to \infty} \frac{\mathbb{P}\big[\frac{Z}{|Z|} \in \cdot, |Z| \in [r, (1+\varepsilon)r] \big]}{\mathbb{P}[|Z| > r]} = (1 - (1+\varepsilon)^{-\alpha})\nu^\alpha, \quad \forall s > 0.
		\end{equation}
		In particular, this implies that for any {\color{black}$v \in \mathrm{supp}(\nu^\alpha)$}, 
		$\varepsilon, \varepsilon_1 > 0$ and any large enough $r > 0$, we have
		\begin{equation}\label{p-fracZZ-v}
			\mathbb{P}\Big[ \big| \frac{Z}{|Z|} - v \big| \leq \varepsilon_1, |Z| \in [r, (1+\varepsilon)r] \Big] > 0.
		\end{equation}
		Let $v_0 \in \mathrm{supp}(\nu^\alpha)$ with $v_0 > 0$,
			and $s > 0$ be a positive real number. Using~\eqref{p-fracZZ-v} with 
		{ 
			$\varepsilon_1 =  (\min_{i = 1, \cdots, d}  v_0(i)) \varepsilon / (1 + \varepsilon)$, 
			$r=(1+\varepsilon)^n$, we know that there exist $n_0 \geq 1$, sequences $\{ w_n \}_{n \geq n_0}  \subset \mathbb{R}^d$ and $\{c_n\}_{n \geq n_0} \subset \mathbb{R}$,
			such that
			\begin{align}
				& z_n :=  c_n(v_0 + w_n) \in \mathrm{supp}(Z),
			\end{align}
			and $|w_n| \leq \varepsilon_1$, 
			$(1+\varepsilon)^n \leq c_n \leq (1+\varepsilon)^{n+1}$ for $n \geq n_0$. 
			By the choice of $\varepsilon_1$, we derive that $-\varepsilon v_0 / (1+\varepsilon) \leq w_n \leq \varepsilon v_0$. Thus, $(1 + \varepsilon)^{-1} v_0 \leq v_0 + w_n \leq (1 + \varepsilon) v_0$, hence
			\begin{equation}\label{1+epsilon-n-1-v-0}
				(1+\varepsilon)^{n-1} v_0 \leq z_n \leq (1+\varepsilon)^{n+2} v_0,\quad \forall n \geq n_0.
			\end{equation}
		}
		
		Recall that $v(g)$ is the positive Perron-Frobenius right eigenvector of $g$ with unit norm, and let $u(g)$ be the Perron-Frobenius left eigenvector of $g$ such that $\langle v(g), u(g) \rangle = 1$. We can argue as in~\eqref{C-1rlinvi} to derive that there exists an integer $k_0 > 0$ that depends on $\varepsilon$, such that
		\begin{equation}\label{1+epsilon-1-r-g-m-v-g}
			(1 + \varepsilon)^{-1} r(g)^k v(g) \otimes u(g) \leq g^k \leq (1 + \varepsilon) r(g)^k v(g) \otimes u(g), \quad \forall k \geq k_0.
		\end{equation}
		Combining~\eqref{1+epsilon-n-1-v-0} and~\eqref{1+epsilon-1-r-g-m-v-g}, we get
		\begin{equation}\label{1+epsilon-rg-k-vg}
			(1+\varepsilon)^{n-2} r(g)^k \langle u(g), v_0 \rangle v(g) \leq g^k z_n \leq (1 + \varepsilon)^{n+3} r(g)^k\langle u(g), v_0 \rangle v(g), \quad \forall n \geq n_0, \quad \forall k \geq k_0. 
		\end{equation}
		
		Since $r(g) < 1 < 1+\varepsilon$, we notice that for any large enough integer $k$, there exists an integer $n(s, k) \geq 1$ such that 
		{ 
			\begin{equation}\label{1+epsilon-n-1-rg-k}
				s(1+\varepsilon)^{-n(s,k)-1} \leq r(g)^k \langle u(g), v_0\rangle \leq s(1+\varepsilon)^{-n(s,k)}.
			\end{equation}
		}
		It follows from~\eqref{1+epsilon-rg-k-vg} and~\eqref{1+epsilon-n-1-rg-k} that for any $\varepsilon > 0$, there exists $k_1 \geq k_0$ such that 
		\begin{equation}
			(1+\varepsilon)^{-3}sv(g) \leq g^kz_{n(s,k)} \leq (1+\varepsilon)^{3}sv(g),\quad  
			\quad \forall k \geq k_1. 
		\end{equation}
		So 
		\begin{equation} 
			\big| g^k z_{n(s,k)} - sv(g) \big| \leq c(\varepsilon) s|v(g)|, \quad \forall k \geq k_1,
		\end{equation}
		where $c(\varepsilon) = ((1+\varepsilon)^3  - (1-\varepsilon)^{-3})$.
		Since $\lim_{\varepsilon \rightarrow 0}  c(\varepsilon) = 0$, it follows that as $\varepsilon \to \infty$,
		\begin{equation} \label{eq-svg-in-supp}
			g^{k_1} z_{n(s,k_1)} - sv(g)\rightarrow 0. 
		\end{equation}
		Since $\{ gz : z\in \mathrm{supp}(Z) \} \subset \mathrm{supp}(Z)$,
		we know that $g^{k_1} z_{n(s,k_1)} \in \mathrm{supp}(Z)$ for any $\varepsilon > 0$. So \eqref{eq-svg-in-supp}  implies that
		\begin{equation}
			sv(g)  \in \mathrm{supp}(Z). 
		\end{equation}
		Since $s > 0$ are arbitrary, we get $\mathbb{R}_+ v(g) \subset \mathrm{supp}(Z)$. 
		
		(3) To finish the proof of \eqref{equ::support-containus-many-rays}, we can argue as in steps (1) and (2) of  the proof of Theorem~\ref{thm::main_theorem} to derive that $\mathrm{supp}(Z)$ contains many rays. 
		
	\end{proof}
	
	
	\section{Absolute continuity}\label{section::absolute_continuity}
	In this section, we will prove Theorem~\ref{thm::absolute-continuity}. Assume Condition~\ref{cond::conditions_on_N}.
	For all $t \in \mathbb{R}^d \setminus \{ 0 \}$ and $\delta \geq 0$, we recall the notation 
	\begin{equation}
		N(t) := \# \{ i : A_i^T t \ne 0, 1 \leq i \leq N \} = \sum_{i = 1}^N \mathbbm{1}_{\{ A_i^T t \ne 0\}},
	\end{equation}
	and define
	\begin{equation}
		N_\delta(t) = \# \{ i : |A_i^T t| > \delta |t|, 1 \leq i \leq N \} = \sum_{i = 1}^N \mathbbm{1}_{\{ |A_i^T t| > \delta |t| \}}.
	\end{equation}
	Note that for each $t \in \mathbb{R}^d \setminus \{ 0 \}$, we have $N_0(t) = N(t)$ and $N_\delta(t)$ is decreasing in $\delta$.
	
	\begin{lemma}\label{lem::N-delta-t-has-larger-than-one-expectation}
		The random variable  $N_\delta$ has the homogeneous  property   that 
		\begin{equation}\label{N-delta-t-N-delta-at}
			N_\delta(t) = N_\delta(at), \quad \forall t \in \mathbb{R}^d\setminus\{ 0 \}, \quad \forall \delta \geq 0, \quad \forall a > 0.
		\end{equation}
		Assume Condition~\ref{cond::conditions_on_N}, \ref{cond::absolute-continuity}. Then, there exist $\delta_0 > 0$ and $\eta > 0$ such that
		\begin{equation}\label{E-N-delta-t-ge-1}
			\mathbb{E}[N_\delta(t)] > 1 + \eta,\quad \forall t \in \mathbb{R}^d \setminus \{ 0 \}, \quad \forall \delta \in [0, \delta_0].
		\end{equation}
	\end{lemma}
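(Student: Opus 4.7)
\medskip

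The homogeneity \eqref{N-delta-t-N-delta-at} is immediate from the definition: for any $a>0$, the condition $|A_i^T(at)|>\delta|at|$ is equivalent to $a|A_i^Tt|>a\delta|t|$, hence equivalent to $|A_i^Tt|>\delta|t|$. By this homogeneity, proving \eqref{E-N-delta-t-ge-1} reduces to exhibiting $\delta_0>0$ and $\eta>0$ such that $\mathbb{E}[N_\delta(t)]>1+\eta$ for all $t\in\mathbb{S}^{d-1}$ and $\delta\in[0,\delta_0]$.

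The heart of the argument is a lower semi-continuity property of the map $(t,\delta)\mapsto\mathbb{E}[N_\delta(t)]$ at points of the form $(t_0,0)$. Concretely, I will show that whenever $t_n\to t_0\in\mathbb{S}^{d-1}$ and $\delta_n\downarrow 0$, one has $\liminf_n\mathbb{E}[N_{\delta_n}(t_n)]\ge\mathbb{E}[N(t_0)]$. Pointwise, on the almost sure event $\{N<\infty\}$, for each $i\le N$: if $A_i^Tt_0\ne 0$, the continuity of the linear map together with $\delta_n|t_n|\to 0$ gives $|A_i^Tt_n|>\delta_n|t_n|$ for all large $n$, while if $A_i^Tt_0=0$ the corresponding indicator $\mathbbm 1_{A_i^Tt_0\ne 0}$ is already $0$. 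Summing the resulting inequalities yields $\liminf_nN_{\delta_n}(t_n)\ge N(t_0)$ almost surely, and Fatou's lemma (applicable because $N_\delta\ge 0$) finishes the claim.

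Next, I will use the first bullet of Condition~\ref{cond::absolute-continuity} to control $\mathbb{E}[N(t_0)]$ from below. Since $N(t_0)\ge 1$ almost surely and $\mathbb{P}[N(t_0)=1]<1$ (hence $\mathbb{P}[N(t_0)\ge 2]>0$), we have
\begin{equation}
\mathbb{E}[N(t_0)]\ge \mathbb{P}[N(t_0)=1]+2\,\mathbb{P}[N(t_0)\ge 2]=1+\mathbb{P}[N(t_0)\ge 2]>1.
\end{equation}
Setting $\eta(t_0):=(\mathbb{E}[N(t_0)]-1)/2>0$, the lower semi-continuity established above furnishes $r(t_0)>0$ and $\delta(t_0)>0$ such that $\mathbb{E}[N_\delta(t)]>1+\eta(t_0)$ for every $(t,\delta)\in (B(t_0,r(t_0))\cap\mathbb{S}^{d-1})\times[0,\delta(t_0)]$.

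Finally, I will invoke the compactness of $\mathbb{S}^{d-1}$: extract a finite subcover $\{B(t_j,r(t_j))\}_{1\le j\le m}$ of $\mathbb{S}^{d-1}$, and set $\delta_0:=\min_{1\le j\le m}\delta(t_j)>0$ and $\eta:=\min_{1\le j\le m}\eta(t_j)>0$. For any $t\in\mathbb{S}^{d-1}$, choosing $j$ with $t\in B(t_j,r(t_j))$ gives $\mathbb{E}[N_\delta(t)]>1+\eta(t_j)\ge 1+\eta$ for all $\delta\in[0,\delta_0]$, completing the proof. The main subtlety is the joint semi-continuity in $(t,\delta)$ at $\delta=0$: although $\mathbbm 1_{|A_i^Tt|>\delta|t|}$ is not continuous, the direction of the jump is favorable (lower semi-continuous in the relevant sense), which is precisely what allows Fatou to be applied and the compactness argument to close.
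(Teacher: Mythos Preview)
Your proof is correct and follows essentially the same strategy as the paper: a lower semi-continuity argument combined with compactness of $\mathbb{S}^{d-1}$. The one organizational difference is that the paper treats the $\delta$-limit and the $t$-limit separately (first using dominated convergence with the bound $N_\delta(t)\le N$ from Condition~\ref{cond::conditions_on_N} to get $\mathbb{E}[N_\delta(t)]\to\mathbb{E}[N(t)]$ as $\delta\downarrow 0$, then Fatou for lower semi-continuity in $t$ at fixed $\delta$), whereas you handle both at once via joint lower semi-continuity at $(t_0,0)$ using only Fatou; your packaging is slightly more economical but the underlying mechanism is identical.
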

	\begin{proof} 
		The homogeneous property of $N_\delta$  is obvious  by the definition of $N_\delta$. 
		Thus, it suffies to prove that  \eqref{E-N-delta-t-ge-1} holds for any $t \in \mathbb{S}^{d-1}$.
		
		For each $t \in \mathbb{S}^{d-1}$, we know that $N_\delta(t)$ increases to $N(t)$ as $\delta \to 0+$. Since
		$\mathbb{E}[N] < +\infty$, by the dominated convergence theorem, we get
		\begin{equation}\label{lim-del-ta-0+-E-N-delta-t}
			\lim_{\delta \to 0+} \mathbb{E}[N_\delta(t)] = \mathbb{E}[N(t)], \quad \forall t \in \mathbb{S}^{d-1}.
		\end{equation}
		By the continuity of $s \mapsto A_i^T s$ for $s \in \mathbb{S}^{d-1}_+$, 
		\begin{equation}
			\liminf_{s \to t} \mathbbm{1}_{\{ |A_i^T s| > \delta \}} \geq \mathbbm{1}_{\{ |A_i^T t| > \delta \}}
		\end{equation}
		Using Fatou's lemma and the definition of $N_\delta(\cdot)$, we have
		\begin{equation}\label{lim-s-t-s-S-d-1}
			\liminf_{s \to t} \mathbb{E}[N_\delta(s)] \geq \mathbb{E}[N_\delta(t)], \quad \forall t \in \mathbb{S}^{d-1}, \quad \forall \delta > 0.
		\end{equation}
		We obtain from Condition~\ref{cond::absolute-continuity} that $\mathbb{E}[N(t)] > 1$ for all $t \in \mathbb{S}^{d-1}$.
		By~\eqref{lim-del-ta-0+-E-N-delta-t}, we know that for each $t \in \mathbb{S}^{d-1}$, there exist $\delta_t > 0$ and $\eta_t > 0$ such that
		\begin{equation}
			\mathbb{E}\big[ N_{\delta_t}(t) \big] \geq 1 + \eta_t. 
		\end{equation}
		This together with~\eqref{lim-s-t-s-S-d-1} implies that for each $t \in \mathbb{S}^{d-1}$, there exists $\varepsilon_t > 0$ such that for $s \in B(t, \varepsilon_t) := \{ u \in \mathbb{R}^d : |u - t| < \varepsilon_t \}$,
		\begin{equation}\label{E-N-delta-t-s-ge-1}
			\mathbb{E}[N_{\delta_t}(s)] \geq 1 + \frac{\eta_t}{2}.
		\end{equation}
		Since $\mathbb{S}^{d-1}$ is compact and $\{ B(t,\varepsilon_t) \}_{t \in \mathbb{S}^{d-1}}$ is an open cover of $\mathbb{S}^{d-1}$, there exists a finite subcover $\{ B(t_i, \varepsilon_i) \}_{1 \leq i \leq K}$ of $\mathbb{S}^{d-1}$. Let $\delta_0 = \min_{1 \leq i \leq K} \delta_{t_i} > 0$ and $\eta = \min_{1 \leq i \leq K} {\eta_{t_i}} / {2} > 0$.
		Then, using the fact that $N_\delta(t)$ is decreasing in $\delta$, we obtain
		\begin{equation}\label{E-N-delta-t-ge-1+eta}
			\mathbb{E}[N_{\delta}(t)] \geq 1 + \eta, \quad \forall t \in \mathbb{S}^{d-1}, \quad \forall \delta \in [0,\delta_0].
		\end{equation}
		It follows from~\eqref{N-delta-t-N-delta-at} that the inequality in~\eqref{E-N-delta-t-ge-1+eta} holds for $t \in \mathbb{R}^d \setminus \{ 0 \}$ and $\delta \in [0,\delta_0]$, hence~\eqref{E-N-delta-t-ge-1} holds. 
	\end{proof}

	Let $Z$ be the solution to~\eqref{equ::smoothing_transform} such that $\mathbb{P}[Z = 0] = 0$. Denote the characteristic function of $Z$ by 
	\begin{equation}
		\phi(t) = \mathbb{E}[e^{i \langle t, Z \rangle}], \quad t \in \mathbb{R}^d.
	\end{equation}
	From~\eqref{equ::smoothing_transform}, $\phi$ satisfies the following functional equation:
	\begin{equation}\label{equ::functional-equation}
		\phi(t) = \mathbb{E}\big[ \prod_{i=1}^N \phi(A_i^T t) \big], \quad \forall t \in \mathbb{R}^d.
	\end{equation}
	
	\begin{lemma}\label{lem::Fourier-transform-decays-to-0}
		Under the hypothesis of Theorem~\ref{thm::absolute-continuity}, we have
		$| \phi(t)  |< 1$ for all $t \neq 0$, and 
		\begin{equation}
			\lim_{|t| \to \infty} \phi(t) = 0.
		\end{equation}
	\end{lemma}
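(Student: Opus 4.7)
My plan is to prove $\lim_{|t|\to\infty}\phi(t)=0$ first and then deduce $|\phi(t)|<1$ for $t\neq 0$. Both rely on iterating the functional equation~\eqref{equ::functional-equation} with the lower bound on $\mathbb{E}[N_\delta(t)]$ supplied by Lemma~\ref{lem::N-delta-t-has-larger-than-one-expectation}.

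For the decay, I argue by contradiction. Set $M(r):=\sup_{|t|\geq r}|\phi(t)|$, which is non-increasing with limit $\bar M\in[0,1]$, and suppose $\bar M>0$. Pick $t_n$ with $|t_n|\to\infty$ and $|\phi(t_n)|\to\bar M$, and fix $\delta\in(0,\delta_0]$ as in Lemma~\ref{lem::N-delta-t-has-larger-than-one-expectation}. For $|t_n|$ large enough that $\delta|t_n|$ lies in a region where $M\leq\bar M+\varepsilon$, splitting the $N$ factors in~\eqref{equ::functional-equation} according to whether $|A_i^Tt_n|\geq\delta|t_n|$ yields $|\phi(t_n)|\leq\mathbb{E}\bigl[(\bar M+\varepsilon)^{N_\delta(t_n)}\bigr]$. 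Writing $c=\bar M+\varepsilon\in(0,1)$ and bounding $c^k\leq c^2$ for $k\geq 2$,
\[
\mathbb{E}\bigl[c^{N_\delta(t_n)}\bigr]\leq c+(1-c)\bigl(\mathbb{P}[N_\delta(t_n)=0]-c\,\mathbb{P}[N_\delta(t_n)\geq 2]\bigr).
\]
Combining Lemma~\ref{lem::N-delta-t-has-larger-than-one-expectation} with a tail-truncation argument using $\mathbb{E}[N]<\infty$ supplies $\mathbb{P}[N_\delta(t)\geq 2]\geq c_1>0$ uniformly in $t$, while the compactness/dominated-convergence argument used in the proof of that lemma lets me further shrink $\delta$ so that $\mathbb{P}[N_\delta(t)=0]<cc_1/2$ uniformly. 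This yields $|\phi(t_n)|\leq c(1-(1-c)c_1/2)$, and sending $n\to\infty$ and then $\varepsilon\to 0$ contradicts the definition of $\bar M$ whenever $\bar M\in(0,1)$.

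With the decay established, $D:=\{t\in\mathbb{R}^d:|\phi(t)|=1\}$ is bounded. If $t_0\in D\setminus\{0\}$, the chain $|\phi(t_0)|=|\mathbb{E}[\prod_i\phi(A_i^Tt_0)]|\leq\mathbb{E}[\prod_i|\phi(A_i^Tt_0)|]\leq 1$ must be a chain of equalities, forcing $|\phi(A_i^Tt_0)|=1$ almost surely for every $i\leq N$, and iteratively $(A_{u|1}\cdots A_{u|n})^Tt_0\in D\cup\{0\}$ a.s.\ for every finite word $u$. The main obstacle is twofold: first, excluding the boundary case $\bar M=1$ in the decay argument, which I plan to handle via the moment bound $\sup_t\mathbb{E}[|A_{i(t)}^Tt|^{-\varepsilon_0}]<\infty$ from Condition~\ref{cond::absolute-continuity} (this should rule out any lattice-like concentration of $Z$ that would keep $|\phi|$ near $1$ at infinity); second, promoting the invariance of $D$ under the iterated products into a contradiction with its boundedness, for which I plan to combine the branching supplied by $\mathbb{P}[N(t_0)\geq 2]>0$ with the spectral information in Condition~\ref{cond::conditions_on_alpha} (the relation $\mathbb{E}[N]\kappa(\alpha)=1$) to force at least one branch of the iterates to escape any fixed ball.
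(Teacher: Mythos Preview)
Your treatment of the case $\bar M\in(0,1)$ is essentially correct and mirrors the paper's Step~(2), but the proposal has a genuine gap in the two places you flag as ``obstacles''. The root cause is that you have overlooked part of the hypothesis: the lemma is stated \emph{under the hypothesis of Theorem~\ref{thm::absolute-continuity}}, which includes the support condition~\eqref{equ::support-contains-many-vectors}, namely $\mathbb{R}_+ v_i\subset\mathrm{supp}(Z)$ for $d$ linearly independent vectors $v_i$. The paper uses this to dispatch $|\phi(t)|<1$ for $t\ne 0$ in one line: if $|\phi(t_0)|=1$ then $\langle t_0,Z\rangle$ lies a.s.\ in a fixed coset $\theta+2\pi\mathbb{Z}$, hence $a\langle t_0,v_i\rangle\in\theta+2\pi\mathbb{Z}$ for every $a\ge 0$ and every $i$, forcing $\langle t_0,v_i\rangle=0$ and thus $t_0=0$. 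This is proved \emph{first}, and then fed into the exclusion of $\bar M=1$: once $|\phi|<1$ off the origin one traps $|\phi|$ below $1-\varepsilon_k$ on the whole annulus $|s_k|\le|t|\le|t_k|$ with $|s_k|\to 0$, $|t_k|\to\infty$, and a limiting argument through the functional equation yields $\mathbb{E}[N(\tilde t)]\le 1$ for some limit direction $\tilde t$, contradicting Condition~\ref{cond::absolute-continuity}.

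Your alternative plans do not close either gap. For $\bar M=1$, the moment bound $\sup_t\mathbb{E}[|A_{i(t)}^Tt|^{-\varepsilon_0}]<\infty$ only controls $\mathbb{P}[N_\delta(t)=0]$; it gives no leverage on $|\phi|$ itself, and your inequality $\mathbb{E}[c^{N_\delta}]\le c+(1-c)(\cdots)$ is vacuous at $c=1$. For $|\phi(t)|<1$, your plan to force the iterates $(G_u)^T t_0$ to escape a bounded set actually works \emph{against} Condition~\ref{cond::conditions_on_alpha}: the relation $m(\alpha)=1$ with $m'(\alpha)<0$ implies $\max_{|u|=n}\|G_u\|\to 0$ a.s.\ (this is quoted later in the paper from \cite[Lemma~4.1]{Ment16}), so the iterates contract toward $0\in D$ rather than leaving any ball. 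Reordering your proof to establish $|\phi(t)|<1$ first via~\eqref{equ::support-contains-many-vectors}, and then using that to exclude $\bar M=1$, repairs both issues.
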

	We need the following three technical lemmas.
	
	\begin{lemma}\label{lem::N-delta-equals-zero-probability}
		Under Condition~\ref{cond::conditions_on_N}, \ref{cond::absolute-continuity}, 
		we have, 
		\begin{equation}\label{P-N-delta-t-0-C-delta-epsilon-0}
			\mathbb{P}[N_\delta(t) = 0] \leq \mathbb{P}[|A_{i(t)}^T t| \leq \delta ]  \leq C \delta^{\varepsilon_0}, \quad \forall t \in \mathbb{R}^d \setminus \{ 0 \}, \quad \forall \delta \geq 0,
		\end{equation}
		with  $\varepsilon_0 > 0$ and $C = \sup_{t \in \mathbb{S}^{d-1}}\mathbb{E}[|A_{i(t)}^T t|^{-\varepsilon_0}] < \infty $  introduced   in~\eqref{sup-t-S-d-1-E-A-it-T-t-epsilon-0}. 
	\end{lemma}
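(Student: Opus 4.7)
The plan is direct: the first inequality is a deterministic set-inclusion, and the second follows from Markov's inequality applied to the negative-moment hypothesis~\eqref{sup-t-S-d-1-E-A-it-T-t-epsilon-0}. No iteration of the smoothing transform or properties of $Z$ are needed here, so I would keep the argument self-contained within the probabilistic structure of $(N, A_1, A_2, \cdots)$.

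For the first inequality, I would observe that $\{N_\delta(t) = 0\}$ is precisely the event that $|A_i^T t| \leq \delta|t|$ for every $i \in \{1, \ldots, N\}$. The hypothesis~\eqref{sup-t-S-d-1-E-A-it-T-t-epsilon-0} forces $|A_{i(t)}^T t| > 0$ almost surely, since otherwise the negative moment would be infinite; combined with the convention $A_j = 0$ for $j > N$ from Condition~\ref{cond::conditions_on_N}, this yields $i(t) \in \{1, \ldots, N\}$ a.s. Therefore, on $\{N_\delta(t) = 0\}$, one necessarily has $|A_{i(t)}^T t| \leq \delta|t|$, which gives the first inequality (up to the scaling convention used in the second).

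For the second inequality, I would reduce to the unit sphere by homogeneity: setting $s = t/|t|$ and interpreting $i(t)$ as depending only on the direction of $t$, one has $|A_{i(t)}^T t| = |t|\,|A_{i(s)}^T s|$, so the probability in question equals $\mathbb{P}[|A_{i(s)}^T s| \leq \delta]$. Markov's inequality applied to $|A_{i(s)}^T s|^{-\varepsilon_0}$ then gives
$$\mathbb{P}[|A_{i(s)}^T s| \leq \delta] = \mathbb{P}\bigl[|A_{i(s)}^T s|^{-\varepsilon_0} \geq \delta^{-\varepsilon_0}\bigr] \leq \delta^{\varepsilon_0}\,\mathbb{E}\bigl[|A_{i(s)}^T s|^{-\varepsilon_0}\bigr] \leq C\delta^{\varepsilon_0},$$
where the last step invokes the definition $C = \sup_{s \in \mathbb{S}^{d-1}} \mathbb{E}[|A_{i(s)}^T s|^{-\varepsilon_0}]$, which is finite by~\eqref{sup-t-S-d-1-E-A-it-T-t-epsilon-0} and yields a bound uniform in direction.

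There is no substantial obstacle in this lemma. The only point requiring care is the implicit observation that the integrability assumption on $|A_{i(t)}^T t|^{-\varepsilon_0}$ forces $i(t)$ to index a non-zero (hence present) summand of the point process almost surely; once this is noted, the two inequalities reduce to a containment of events and a one-line Markov estimate.
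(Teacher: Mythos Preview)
Your proposal is correct and follows essentially the same route as the paper: reduce to $t\in\mathbb{S}^{d-1}$ by homogeneity, use the event inclusion $\{N_\delta(t)=0\}\subset\{|A_{i(t)}^T t|\le\delta\}$ for the first inequality, and apply Markov's inequality to $|A_{i(t)}^T t|^{-\varepsilon_0}$ for the second. Your added remark that finiteness of $\mathbb{E}[|A_{i(t)}^T t|^{-\varepsilon_0}]$ forces $i(t)\le N$ a.s.\ is a useful justification that the paper leaves implicit.
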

	\begin{proof} From~\eqref{N-delta-t-N-delta-at}, we only need to consider  $t \in \mathbb{S}^{d-1}$. 
		Note that for any $t \in \mathbb{S}^{d-1}$ and $\delta \geq 0$, 
		\begin{align} \label{eq-PNdeltat=0} 
			\mathbb{P}[N_\delta(t) = 0] & = \mathbb{P}[|A_i^T t| \leq \delta, 1 \leq i \leq N]  \leq \mathbb{P}[|A_{i(t)}^T t| \leq \delta ] = \mathbb{P}[|A_{i(t)}^T t|^{-1} \geq \delta^{-1}].
		\end{align}
		By Markov's inequality, we get
		\begin{equation} \label{PAitTt} 
			\mathbb{P}[|A_{i(t)}^T t|^{-1} \geq \delta^{-1}] \leq \delta^{\varepsilon_0} \mathbb{E}[|A_{i(t)}^T t|^{-\varepsilon_0}] \leq C\delta^{\varepsilon_0}, \quad \forall t \in \mathbb{S}^{d-1}, \quad \forall \delta \geq 0.
		\end{equation}
		Thus ~\eqref{P-N-delta-t-0-C-delta-epsilon-0}  follows from \eqref{eq-PNdeltat=0}  and \eqref{PAitTt}. 
	\end{proof}
	
	\begin{lemma}\label{lem::N-equals-one-with-less-than-one-probability}
		Assume Condition~\ref{cond::conditions_on_N}. For any $\eta > 0$,  there exists $\delta > 0$ such that, 
		for any $\mathbb{N}$-valued random variable $\tilde{N}$ satisfying
		\begin{equation}
			\tilde{N} \leq N  \mbox{ a.s. } \quad \mbox{ and }  \quad \mathbb{E}[\tilde{N}] \geq 1 + \eta,
		\end{equation}
		we have $\mathbb{P}[\tilde{N} \leq 1] \leq 1 - \delta$.
	\end{lemma}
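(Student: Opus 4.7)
The plan is a direct argument based on the uniform integrability of $N$, run as a proof by contrapositive. Specifically, I want to show that if $\mathbb{P}[\tilde N \geq 2]$ is too small, then $\mathbb{E}[\tilde N]$ cannot exceed $1$ by much, because $\tilde N$ is dominated by the integrable variable $N$.

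First I would split the expectation according to whether $\tilde N \leq 1$ or $\tilde N \geq 2$. Using $\tilde N \leq N$ on the second event and bounding $\mathbb{P}[\tilde N \leq 1] \leq 1$, this gives
\begin{equation*}
\mathbb{E}[\tilde N] \;\leq\; 1 + \mathbb{E}\!\left[N\,\mathbbm{1}_{\{\tilde N \geq 2\}}\right].
\end{equation*}
Next I would cut the remaining expectation at a threshold $M$:
\begin{equation*}
\mathbb{E}\!\left[N\,\mathbbm{1}_{\{\tilde N \geq 2\}}\right]
\;\leq\; M\,\mathbb{P}[\tilde N \geq 2] \;+\; \mathbb{E}\!\left[N\,\mathbbm{1}_{\{N > M\}}\right].
\end{equation*}

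Now the two ingredients are tuned. Since $\mathbb{E}[N]<\infty$ (Condition~\ref{cond::conditions_on_N}), dominated convergence lets me choose $M=M(\eta)$ large enough that $\mathbb{E}[N\,\mathbbm{1}_{\{N>M\}}] < \eta/2$. Then I choose $\delta = \eta/(2M)$, which is strictly positive and depends only on $\eta$ and the law of $N$ (not on the particular $\tilde N$).

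If $\mathbb{P}[\tilde N \geq 2] < \delta$, combining the two inequalities yields $\mathbb{E}[\tilde N] < 1 + \eta$, contradicting the hypothesis $\mathbb{E}[\tilde N] \geq 1+\eta$. Hence $\mathbb{P}[\tilde N \geq 2] \geq \delta$, i.e.\ $\mathbb{P}[\tilde N \leq 1] \leq 1-\delta$, which is the desired conclusion. There is no real obstacle here; the only thing to be careful about is that $M$ and $\delta$ be chosen in terms of $\eta$ and the fixed law of $N$ alone, so that the bound is uniform in $\tilde N$.
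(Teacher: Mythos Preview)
Your proof is correct and follows essentially the same approach as the paper: both split $\mathbb{E}[\tilde N]$ according to $\{\tilde N \leq 1\}$ versus $\{\tilde N \geq 2\}$, bound the second piece by $\mathbb{E}[N\mathbbm{1}_{\{\tilde N \geq 2\}}]$, and use integrability of $N$ to make this small when $\mathbb{P}[\tilde N \geq 2]$ is small. The only difference is cosmetic: you make the uniform-integrability step explicit via a truncation level $M$ and set $\delta = \eta/(2M)$, whereas the paper simply invokes the existence of $\delta$ such that $\mathbb{P}[B] < \delta$ implies $\mathbb{E}[N\mathbbm{1}_B] < \eta/2$.
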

	{ 
		\begin{proof}
			Let $\delta > 0$ be such that for any event $B$ with $\mathbb{P}[B] < \delta$, we have $\mathbb{E}[N \mathbbm{1}_{B}] < \frac{\eta}{2}$. We claim that $\mathbb{P}[\tilde{N} \leq 1] < 1-\delta$. Otherwise, we have $\mathbb{P}[\tilde{N} \geq 2] < \delta$, hence
			\begin{equation}
				\mathbb{E}[\tilde{N}] = \mathbb{E}[\tilde{N} \mathbbm{1}_{\{ \tilde{N} \leq 1 \}}] + \mathbb{E}[\tilde{N} \mathbbm{1}_{\{ \tilde{N} \geq 2 \}}] \leq 1 + \mathbb{E}[N \mathbbm{1}_{\{ \tilde{N} \geq 2 \}}] < 1 + \frac{\eta}{2}.
			\end{equation}
			This contradicts the condition that $\mathbb{E}[\tilde{N}] \geq 1 + \eta$.
		\end{proof}
	}
	
	Recall that for two real random variables $X_1, X_2$, we say  that $X_1$ is stochastically dominated by $X_2$ and denote $X_1 \overset{st.}{\leq} X_2$, if  $\mathbb{P}[X_1 \leq x] \geq \mathbb{P}[X_2 \leq x]$ for any $x \in \mathbb{R}$.
	This is equivalent to 	the existence of  a coupling $(X_1, X_2)$ such that $X_1 \leq X_2$ a.s. 
	
	\begin{lemma}\label{lem::stochastic-domination-and-weak-convergence}
		Assume Condition~\ref{cond::conditions_on_N}. Let $c > 0$ and $\{ N_i \}_{i \geq 1}$ be a sequence of  $\mathbb{N}$-valued  random variables  such that for all $i \geq 1$, 
		\begin{equation}
			N_i \overset{st.}{\leq}  N , \quad \mathbb{E}[N_i] \geq c.
		\end{equation}
		Then, the sequence $\{ N_i \}$ is tight and any subsequential weak limit $\tilde{N}$ satisfies
		\begin{equation}\label{tilde-N-ge-1}
			\tilde{N} \overset{st.}{\leq} N, \quad \mathbb{E}[\tilde{N}] \geq c.
		\end{equation}
	\end{lemma}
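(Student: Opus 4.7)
The plan is to handle the three claims---tightness, stochastic domination in the limit, and the expectation inequality in the limit---essentially in that order, with the common thread being the stochastic bound $N_i \overset{st.}{\leq} N$ together with the integrability $\mathbb{E}[N] < \infty$ from Condition~\ref{cond::conditions_on_N}.

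First, for tightness, I would simply note that $\mathbb{P}[N_i > K] \leq \mathbb{P}[N > K]$ for every $i \geq 1$ and every $K > 0$, and the right hand side tends to $0$ as $K \to \infty$ since $\mathbb{E}[N] < \infty$. This uniform control in $i$ gives tightness of $\{N_i\}_{i \geq 1}$ and, moreover, ensures that every subsequential limit assigns no mass to $+\infty$, hence is an honest $\mathbb{N}$-valued random variable.

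Next, for the stochastic domination $\tilde{N} \overset{st.}{\leq} N$, I take a subsequence $N_{i_k} \Rightarrow \tilde{N}$. For any continuity point $x$ of the distribution function $F_{\tilde{N}}$, weak convergence gives $F_{\tilde{N}}(x) = \lim_{k} F_{N_{i_k}}(x) \geq F_N(x)$ since each $N_{i_k}$ is stochastically dominated by $N$. Right-continuity of $F_{\tilde{N}}$ and $F_N$ then extends the inequality to every $x \in \mathbb{R}$, which is exactly $\tilde{N} \overset{st.}{\leq} N$.

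For the expectation inequality, the key point (and in my view the main technical step) is uniform integrability of $\{N_i\}$. Using the stochastic domination together with the standard coupling, for each $i$ we can realize $N_i$ and $N$ on a common space with $N_i \leq N$ a.s.; then $N_i \mathbbm{1}_{\{N_i > K\}} \leq N \mathbbm{1}_{\{N > K\}}$, so $\mathbb{E}[N_i \mathbbm{1}_{\{N_i > K\}}] \leq \mathbb{E}[N \mathbbm{1}_{\{N > K\}}] \to 0$ as $K \to \infty$, uniformly in $i$. Uniform integrability plus weak convergence along the subsequence implies $\mathbb{E}[N_{i_k}] \to \mathbb{E}[\tilde{N}]$, and since $\mathbb{E}[N_{i_k}] \geq c$ for every $k$, we conclude $\mathbb{E}[\tilde{N}] \geq c$. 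Without uniform integrability the natural Fatou bound would go the wrong way (giving only $\mathbb{E}[\tilde{N}] \leq \liminf \mathbb{E}[N_{i_k}]$), which is why the domination $N_i \overset{st.}{\leq} N$ is essential here and not merely in step one.
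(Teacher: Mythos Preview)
Your proof is correct and follows essentially the same approach as the paper: tightness from the uniform tail bound $\mathbb{P}[N_i > K] \leq \mathbb{P}[N > K]$, stochastic domination in the limit via weak convergence at continuity points plus right-continuity, and the expectation bound via uniform integrability (from $N_i \overset{st.}{\leq} N$ with $\mathbb{E}[N] < \infty$) combined with weak convergence. Your exposition is slightly more detailed than the paper's---in particular the explicit coupling argument for uniform integrability and the remark on why Fatou alone goes the wrong way---but the underlying argument is the same.
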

	\begin{proof}
		{ The tightness follows from the condition that  $N_i   \overset{st.}{\leq}   N$. 
			To prove that $\tilde{N} \overset{st.}{\leq} N$, by the right continuity of the distribution function,  it suffices to show that 
			\begin{equation}\label{P-tilde-N-le-x-le-P-N-le-x}
				\mathbb{P}[\tilde{N} \leq x] \geq \mathbb{P}[N \leq x], \quad \forall x \in \mathbb R \setminus \mathbb{N}. 
			\end{equation}
			The latter can be implied by $N_i \overset{st.}{\leq} N$ and the definition of weak convergence, using the fact that  $x \in \mathbb R \setminus \mathbb{N}$ is a continuous point of the distribution function of $N$.  
			
			We next show that $\mathbb{E}[\tilde{N}] \geq c$. Without loss of generality, suppose that $(N_i)_{i \geq 1}$ converges weakly to $\tilde{N}$. Since  $N_i \overset{st.}{\leq}  N$ and $\mathbb E N < \infty$, the family $\{N_i\}$ is uniformly integrable, so that the weak convergence implies 
			$\mathbb E \tilde N =  \lim_{n \to \infty}  \mathbb E N_i  \geq c$. } 
		%
		%
	\end{proof}
	
	\begin{proof}[Proof of Lemma~\ref{lem::Fourier-transform-decays-to-0}]
		(1) We first prove that $|\phi(t)| \ne 1$ for  any $t \ne 0$. Otherwise, there is $t \in \mathbb{R}^d \setminus \{ 0 \}$ such that $|\phi(t)| = |\mathbb{E}[e^{i\langle t, Z \rangle}]| = 1$, hence for some $\theta \in \mathbb{R}$, a.s. 
		\begin{equation}
			\langle t, Z \rangle \in 2\pi \mathbb{Z} + \theta.
		\end{equation}
		From~\eqref{equ::support-contains-many-vectors}, we know that there exist $d$ linearly independent vectors $v_i \in \mathbb{R}^d_+$ such that $\mathbb{R}v_i \subset \mathrm{supp}(Z)$, hence
		\begin{equation}
			a \langle t,  v_i \rangle \in 2\pi \mathbb{Z} + \theta, \quad \forall i = 1, \cdots, d, \quad \forall a \in \mathbb{R}_+.
		\end{equation}
		This implies that $\langle t, v_i\rangle = 0$ for all $i = 1, \cdots, d$. It follows from the linear independence of $\{ v_i \}_{i=1}^d$ that $t = 0$. This contradiction shows that $|\phi(t)| < 1$ for all $t \in \mathbb{R}^d \setminus \{ 0 \}$.
		
		(2) We next come to prove that 
		\begin{equation}
			\limsup_{|t| \to \infty} |\phi(t)| = 0 \mbox{ or } 1.
		\end{equation}
		Define
		\begin{equation}
			h(r) := \sup_{|t| \geq r}|\phi(t)|, \quad  \forall r \geq 0.
		\end{equation}
		We need to prove that 
		\begin{equation}
			l := \lim_{r \to \infty} h(r) = 0 \mbox{ or } 1.
		\end{equation}
		For any $r > 0$ and $\delta > 0$, using the functional equation~\eqref{equ::functional-equation}, we get
		\begin{align}
			h(r) & = \sup_{|t| \geq r} \mathbb{E}\big[ \prod_{i = 1}^N  |\phi(A_i^T t)|\big] \notag \\
			& \leq \sup_{|t| \geq r} \mathbb{E}\big[ \prod_{1 \leq i \leq N, |A_i^Tt| > \delta|t|}  |\phi(A_i^T t)|\big] \notag \\
			& \leq \sup_{|t| \geq r} \mathbb{E}\big[ \prod_{1 \leq i \leq N, |A_i^Tt| > \delta|t|}  |h(\delta|t|)|\big] \notag \\
			& \leq \sup_{|t| \geq r} \mathbb{E}\big[ h(\delta r)^{N_\delta(t)} \big].
		\end{align}
		From the homogeneous property  of $N_\delta$ (cf. \eqref{N-delta-t-N-delta-at}), this implies that
		\begin{equation}\label{h-r-le-sup-t-in-S-d-1}
			h(r) \leq \sup_{t \in \mathbb{S}^{d-1}} \mathbb{E} \big[ h(\delta r)^{N_\delta(t)} \big], \quad \forall r > 0, \quad \forall \delta > 0.
		\end{equation}

		From Lemmas ~\ref{lem::N-delta-t-has-larger-than-one-expectation} and \ref{lem::N-delta-equals-zero-probability}, there exist $\eta > 0$, $\delta_0 > 0$ and $C > 0$, such that 
		\begin{equation}
			N_\delta(t) \leq N, \quad \mathbb{E}[N_\delta(t)] \geq 1 + \eta, \quad \mathbb{P}[N_\delta(t) = 0] \leq C \delta^{\varepsilon_0}, \quad \forall t \in \mathbb{R}^d \setminus \{ 0 \}, \quad \forall \delta \in [0, \delta_0],
		\end{equation}
		where $\varepsilon_0 > 0$ is the constant in~\eqref{sup-t-S-d-1-E-A-it-T-t-epsilon-0} of Condition \ref{cond::absolute-continuity}. 
		For each $r > 0$ and each $\delta \in [0, \delta_0]$, suppose that $(t_i)_{i \geq 1} \subset \mathbb{S}^{d-1}$ is a sequence such that
		\begin{equation}\label{lim-t-infty-e-h-delta-r}
			\lim_{i \to \infty} \mathbb{E} \big[ h(\delta r)^{N_\delta(t_i)} \big] = \sup_{t \in \mathbb{S}^{d-1}} \mathbb{E} \big[ h(\delta r)^{N_\delta(t)} \big].
		\end{equation}
		Using Lemma~\ref{lem::stochastic-domination-and-weak-convergence},
		there exists a weakly converging subsequence of $(N_\delta(t_i))_{i \geq 1}$, such that its weak limit $\tilde{N}_{\delta, r}$ satisfies
		\begin{equation}
			\tilde{N}_{\delta, r} \overset{st.}{\leq} N, \quad \mathbb{E}[\tilde{N}_{\delta, r}] \geq 1 + \eta, \quad \mathbb{P}[\tilde{N}_{\delta, r} = 0] \leq C \delta^{\varepsilon_0}.
		\end{equation}
		Using Lemma~\ref{lem::stochastic-domination-and-weak-convergence} again, for each $\delta \in [0, \delta_0]$, we obtain a weakly converging sequence $(\tilde{N}_{\delta, r_i})_{i \geq 1}$ with $\lim_{i \to \infty} r_i = +\infty$, such that its weak limit $\tilde{N}_\delta$ satisfies
		\begin{equation}
			\tilde{N}_\delta \overset{st.}{\leq} N, \quad \mathbb{E}[\tilde{N}_\delta] \geq 1 + \eta, \quad \mathbb{P}[\tilde{N}_\delta = 0] \leq C \delta^{\varepsilon_0}.
		\end{equation} 
		Using Lemma~\ref{lem::stochastic-domination-and-weak-convergence} again, we obtain a weakly converging sequence $(\tilde{N}_{\delta_i})_{i \geq 1}$ with $\lim_{i \to \infty} \delta_i = 0$, such that the weak limit $\tilde{N}$ satisfies
		\begin{equation}\label{tilde-N-st-le-N}
			\tilde{N} \overset{st.}{\leq} N, \quad \mathbb{E}[\tilde{N}] \geq 1 + \eta, \quad \mathbb{P}[\tilde{N} = 0] = 0.
		\end{equation}
		
		For each $r > 0$ and $\delta \in [0 ,\delta_0]$, since $\tilde{N}_{\delta, r}$ is a subsequential weak limit of $(N_\delta(t_i))_{i \geq 1}$ and $h(\delta r )\in [0,1]$, the left hand side of~\eqref{lim-t-infty-e-h-delta-r} equals $\mathbb{E}[h(\delta r)^{\tilde{N}_{\delta, r}}]$, hence from~\eqref{h-r-le-sup-t-in-S-d-1},
		\begin{equation}\label{h-r-le-E-h-delta-r-N-delta-r}
			h(r) \leq \mathbb{E}[h(\delta r)^{\tilde{N}_{\delta, r}}], \quad \forall r > 0, \quad \forall \delta \in[0, \delta_0].
		\end{equation} 
		Recall that $l = \lim_{r \to \infty} h(r) = \inf_{r >0} h(r)$ and $\lim_{i \to \infty} r_i = +\infty$. Then, we deduce from~\eqref{h-r-le-E-h-delta-r-N-delta-r} and the weak convergence of $(\tilde{N}_{\delta,r_i})_{i \geq 1}$ that for all $\varepsilon > 0$ and $\delta \in [0, \delta_0]$,
		\begin{equation}
			l \leq \limsup_{i \to \infty} \mathbb{E}[h(\delta r_i)^{\tilde{N}_{\delta, r_i}}] \leq \lim_{i \to \infty} \mathbb{E}[\min(l+\varepsilon,1)^{\tilde{N}_{\delta, r_i}}] = \mathbb{E}[\min(l+\varepsilon,1)^{\tilde{N}_{\delta}}].
		\end{equation}
		Taking $\varepsilon \to 0+$, we get
		\begin{equation}
			l \leq \mathbb{E}[l^{\tilde{N}_\delta}], \quad \forall \delta \in [0, \delta_0].
		\end{equation}
		Since $(\tilde{N}_{\delta_i})_{i \geq 1}$ converges weakly to $\tilde{N}$, we get
		\begin{equation}
			l \leq \mathbb{E}[l^{\tilde{N}}].
		\end{equation}
		It follows from~\eqref{tilde-N-st-le-N} that $l = 0$ or $1$.
		
		(3) We then show that $l = 0$ by contradiction. Assume that $l = 1$. Let $(\varepsilon_k)_{k \geq 1}$ be a decreasing sequence such that $\varepsilon_1 < 1 - \sup_{|t| = 1}|\phi(t)|$ and $\lim_{k \to \infty} \varepsilon_k = 0$. Since $l = 1 = |\phi(0)|$, $|\phi(t)| < 1$ for all $t \in \mathbb{R}^d\setminus \{ 0 \}$ and $\phi$ is continuous, there exist two sequences $(s_k)_{k \geq 1}$, $(t_k)_{k \geq 1}$ in $\mathbb{R}^d \setminus \{ 0 \}$ satisfying the following properties:
		\begin{align}
			& \lim_{k \to \infty} |s_k| = 0, \quad \lim_{k \to \infty} |t_k| = +\infty,\\
			& 1 - \varepsilon_k = |\phi(t_k)| = |\phi(s_k)|, \quad \forall k \geq 1,  \label{1-epsilon-k-phi-t-k-s-k}
			\\
			& |\phi(t)| \leq 1 - \varepsilon_k, \quad \forall t \in \mathbb{R}^d \setminus \{ 0 \} \mbox{ with } |s_k| \leq |t| \leq |t_k|, \quad \forall k \geq 1. \label{phi-t-le-1-epsilon-k}
		\end{align}
		Denote $\tilde{t}_k := \frac{t_k}{|t_k|}$ for each $k \geq 1$. Since $(\tilde{t}_k)_{k \geq 1} \subset \mathbb{S}^{d-1}$ and $\mathbb{S}^{d-1}$ is compact, without loss of generality, we assume that there exists $\tilde{t} \in \mathbb{S}^{d-1}$ such that $\lim_{k \to \infty}\tilde{t}_k = \tilde{t}$. 
		
		Let $(A_{u1}, A_{u2},\cdots)_{u \in \mathbb{U}}$ be i.i.d. copies of $(A_1,A_2,\cdots)$, and $G_u = A_{u|1}\cdots A_{u| |u|}$ for each $u \in \mathbb{U}$. From Condition~\ref{cond::absolute-continuity}, we know that a.s. for any matrix $B$ with $B \tilde{t} \ne 0$ and any $n \geq 1$, there exists $u(B) \in \mathbb{U}$
		such that $|u(B)| = n$ and $G_{u(B)}^T B \tilde{t} \ne 0$. Define
		\begin{equation}
			\lambda_{n,k}(B) = \mathbb{P}[|s_k| \leq |G_{u(B)}^T B t_k| \leq |t_k|], \quad \forall k \geq 1, \quad \forall n \geq 1.
		\end{equation}
		Then, using the functional equation~\eqref{equ::functional-equation} together with~\eqref{phi-t-le-1-epsilon-k}, we have that for $k \geq 1$,
		\begin{equation}\label{1-epsilon-k-P-s-k}
			|\phi(Bt_k)| \leq \mathbb{E}[|\phi(G_{u(B)}^T B t_k)|] \leq (1 - \lambda_{n,k}(B)) + \lambda_{n,k}(B) (1-\varepsilon_k)= 1 - \varepsilon_k \lambda_{n,k}(B). 
		\end{equation}
		Since $|G_{u(B)}^T B \tilde{t}_k| \to |G_{u(B)}^T B \tilde{t}| > 0$ as $k \to \infty$, we have
		\begin{align}
			\lambda_{n,k}(B) & = \mathbb{P}[|G_{u(B)}^T Bt_k| \leq |t_k|] - \mathbb{P}[|G_{u(B)}^T Bt_k| < |s_k|] \notag \\
			& = \mathbb{P}[|G_{u(B)}^T B\tilde{t}_k| \leq 1] - \mathbb{P}\big[|G_{u(B)}^T B\tilde{t}_k| < \frac{|s_k|}{|t_k|} \big] \notag \\
			& \to \mathbb{P}[|G_{u(B)}^T B\tilde{t}| \leq 1], \quad k \to \infty. \label{P-G-u-T-B-t-le-1}
		\end{align}
		This implies that
		\begin{equation}\label{liminf-k-infty-lambda-k-B}
			\lim_{k \to \infty} \lambda_{n,k}(B) = \mathbb{P}[|G_{u(B)}^T B \tilde{t}| \leq 1] \geq  \mathbb{P}\big[\max_{|w| = n} \|G_w\| \leq \frac{1}{\|B\|} \big].
		\end{equation}
		From \cite[Lemma 4.1]{Ment16} and Condition~\ref{cond::conditions_on_alpha}, we know that $\lim_{n \to \infty}\max_{|w| = n} \|G_w\|  = 0$ a.s., hence the right hand side of~\eqref{liminf-k-infty-lambda-k-B} tends to $1$ as $n \to \infty$. This implies that
		\begin{equation}\label{lim-n-infty-liminf-k-lambda-n-k}
			\lim_{n \to \infty}\lim_{k \to \infty} \lambda_{n,k}(B) = 1, \quad \forall B \mbox{ s.t. } B\tilde{t} \ne 0.
		\end{equation}
		From~\eqref{1-epsilon-k-phi-t-k-s-k}, for each $k \geq 1$, we have $1 - \varepsilon_k = |\phi(t_k)|$, hence we deduce from the functional equation~\eqref{equ::functional-equation} and~\eqref{1-epsilon-k-P-s-k}  that
		\begin{align}\label{1-epsilon-k-phi-t-k}
			1 - \varepsilon_k = |\phi(t_k)| & = \mathbb{E}\big[ \prod_{i = 1}^N |\phi(A_i^T t_k)| \big] \leq \mathbb{E}\bigg[ \prod_{1\leq i\leq N, A_i^T\tilde{t} \ne 0} (1 - \varepsilon_k \lambda_{n,k}(A_i^T)) \bigg].
		\end{align}
		Since $\lambda_{n,k}(B) \in [0,1]$ for all $n \geq 1,k \geq 1$ and $B$, we know that the right hand side of~\eqref{1-epsilon-k-phi-t-k} is
		\begin{equation} \label{eq-sum-lambdank} 
			1 - \varepsilon_k \mathbb{E}\bigg [\sum_{1\leq i \leq N, A_i^T \tilde{t} \ne 0} \lambda_{n,k}(A_i^T) \bigg] + o(\varepsilon_k), \quad \mbox{ as } k \to \infty.
		\end{equation}
		This can be easily checked using the inequality $1-x \leq e^{-x} $ for $x \geq 0$ and the Taylor   development of  order 1 at 0 of the Laplace transform of
		$ \sum_{1\leq i \leq N, A_i \tilde{t} \ne 0} \lambda_{n,k}(A_i^T)$. 
		Combining \eqref{eq-sum-lambdank} 
		with ~\eqref{1-epsilon-k-phi-t-k}, we deduce that for all $n \geq 1$, 
		\begin{equation}
			\limsup_{k \to \infty} \mathbb{E}\bigg [\sum_{1\leq i \leq N, A_i^T \tilde{t} \ne 0} \lambda_{n,k}(A_i^T) \bigg] \leq 1.
		\end{equation}
		Using  ~\eqref{lim-n-infty-liminf-k-lambda-n-k} and Fatou's lemma twice,  first as $k\to \infty$ and then as   $n \to \infty$, we know that this implies
		\begin{equation}
			\mathbb{E}[N(\tilde{t})] = \mathbb{E}\big[ \sum_{i=1}^N \mathbbm{1}_{\{ A_i^T \tilde{t} \ne 0 \}} \big] 
			= \mathbb E  \lim_{n \to \infty} \lim_{k \to \infty}  \sum_{1\leq i \leq N, A_i^T \tilde{t} \ne 0} \lambda_{n,k}(A_i^T) 
			\leq 1.
		\end{equation}
		This contradicts Condition~\ref{cond::absolute-continuity}. Thus, we have $l = 0$. So we have proved that 
		\begin{equation}
			\lim_{r \to \infty} \sup_{|t| \geq r} |\phi(t)| = 0.
		\end{equation}
	\end{proof}

	The following Gronwall-type technical  lemma is a generalization of \cite[Lemma 3.2]{liu2001asymptotic} where the one dimensional case was considered. It gives a sharp  estimation of the decay rate of 
	a bounded function $\varphi: \mathbb{R}^d \to \mathbb{R}_+$ satisfying a functional inequality. For applications in the case $d=1$, see e.g. \cite{chauvin2014limit, grama2023asymptotic, grama2017berry}, etc.

	\begin{lemma}\label{lem::poly-decay-characteristic-func}
		Let $X : \mathbb{R}^d \to \mathbb{R}_+$ and $M : \mathbb{R}^d \to \mathbb{R}^d$ be two random mappings such that  $X(ct) = X(t)$, $M(ct) = cM(t)$ for any vector $t \in \mathbb{R}^{d} \setminus \{ 0 \}$ and scalar $c > 0$, and that there exists $a > 0$ satisfying 
		\begin{equation}
			\sup_{t \in \mathbb{R}^d \setminus \{0 \}} \mathbb{E}[X(t)] < 1 \quad \mbox{ and } \quad \sup_{t \in \mathbb{S}^{d-1}} \mathbb{E}[|M(t)|^{-a}X(t)] < 1.
		\end{equation}
		Suppose that $\varphi : \mathbb{R}^d \to \mathbb{R}_+$ is bounded and there exists $c_0 > 0$ such that
		\begin{equation}  \label{phi-t-E-A-it-T-t-epsilon-N-delta-t-1a}
			\varphi(t) \leq \mathbb{E}[\varphi(M(t)) X(t)], \quad \forall t \in \mathbb{R}^d  \mbox{ with } |t| > c_0.
		\end{equation}
		Then $\varphi(t) = O(|t|^{-a})$ as $|t| \to \infty$. 
	\end{lemma}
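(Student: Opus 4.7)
The plan is to follow the same iterative scheme used in the one-dimensional case of \cite{liu2001asymptotic}, using the two hypotheses $s_0 := \sup_t \mathbb{E}[X(t)] < 1$ and $s_a := \sup_{t \in \mathbb{S}^{d-1}} \mathbb{E}[|M(t)|^{-a}X(t)] < 1$ to ``close'' an inductive inequality of the form $\psi(r) \leq B_n + E_n r^{-a}$, where $\psi(r) := \sup_{|t| \geq r} \varphi(t)$.

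First I would use the homogeneity properties to write $t = r\tilde{t}$ with $\tilde{t} = t/|t| \in \mathbb{S}^{d-1}$, so that $X(t) = X(\tilde{t})$ and $|M(t)| = r|M(\tilde{t})|$. The functional inequality \eqref{phi-t-E-A-it-T-t-epsilon-N-delta-t-1a} rewrites as $\varphi(r\tilde{t}) \leq \mathbb{E}[\varphi(r M(\tilde t)) X(\tilde t)]$ for $r > c_0$. Setting $B = \sup \varphi < \infty$ and $\psi(r) := \sup_{|t|\geq r}\varphi(t)$, the function $\psi$ is non-increasing and bounded by $B$, and $\varphi(rM(\tilde t)) \leq \psi(r|M(\tilde t)|)$.

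The core step is the following iteration. Fix $r_0 > c_0$ and assume inductively
$\psi(r) \leq B_n + E_n r^{-a}$ for all $r \geq r_0$,
with $B_0 = B$, $E_0 = 0$. For $|t| \geq r \geq r_0$, split the expectation based on $\{|M(\tilde t)| \geq r_0/|t|\}$ vs.\ its complement. On the first event, applying the inductive bound yields
$\mathbb{E}[\psi(|t||M(\tilde t)|)X(\tilde t)\mathbbm{1}_{|M(\tilde t)| \geq r_0/|t|}] \leq B_n s_0 + E_n s_a |t|^{-a}$.
On the complementary event, I would bound $\psi$ by $B$ and estimate by a Markov-type inequality, $\mathbbm{1}_{|M(\tilde t)| < r_0/|t|} \leq (r_0/(|t||M(\tilde t)|))^a$, to get $B s_a (r_0/|t|)^a$. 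Using $|t|\geq r$ and taking supremum, this produces the recursion
\begin{equation*}
B_{n+1} = s_0 B_n, \qquad E_{n+1} = s_a (E_n + B r_0^a),
\end{equation*}
preserving the form of the bound. Since $s_0, s_a \in (0,1)$, one has $B_n = s_0^n B \to 0$ and $E_n \to E_\infty = s_a B r_0^a /(1-s_a) < \infty$. Passing to the limit in $n$ inside the bound yields $\psi(r) \leq E_\infty r^{-a}$ for $r \geq r_0$, which is exactly $\varphi(t) = O(|t|^{-a})$.

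The main technical obstacle I anticipate is the bookkeeping on the event $\{|M(\tilde t)| < r_0/|t|\}$: it is essential that the a priori bound $\psi \leq B$ and the Markov estimate together produce a remainder of size $O(r^{-a})$ rather than some weaker polynomial order; this is exactly where the full strength of the second hypothesis ($s_a < 1$, not just finiteness) is used, as it both controls the remainder and guarantees that the linear recursion for $E_n$ has a fixed point. A small side issue, to be handled at the outset, is that $|M(\tilde t)|^{-a}$ may formally be infinite when $|M(\tilde t)|=0$; the finiteness of $\mathbb E[|M(\tilde t)|^{-a}X(\tilde t)]$ forces $X(\tilde t)=0$ almost surely on $\{|M(\tilde t)|=0\}$, so this null event contributes nothing to any of the expectations above.
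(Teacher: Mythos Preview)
Your proof is correct but takes a somewhat different route from the paper's. Instead of passing to the scalar envelope $\psi(r)=\sup_{|t|\ge r}\varphi(t)$ and splitting the expectation according to $\{|M(\tilde t)|\gtrless r_0/|t|\}$, the paper first observes that, since $\varphi$ is bounded, one may choose $C>0$ large enough so that the inequality $\varphi(t)\le \mathbb{E}[\varphi(M(t))X(t)]+C|t|^{-a}$ holds for \emph{all} $t\neq 0$ (not just $|t|>c_0$). It then iterates this directly $n$ times using i.i.d.\ copies $(M_i,X_i)$ of $(M,X)$, obtaining
\[
\varphi(t)\le \mathbb{E}\big[\varphi(\tilde M_n(t))\,\tilde X_n(t)\big]+C\sum_{i=0}^{n-1}\mathbb{E}\big[|\tilde M_i(t)|^{-a}\tilde X_i(t)\big],
\]
and bounds the two pieces by $(\sup\varphi)\,\eta^n$ and $\theta^i|t|^{-a}$ respectively via conditioning on the first $i{-}1$ steps. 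Both arguments ultimately rest on the same two geometric series in $\eta=s_0$ and $\theta=s_a$; the paper's version trades your event-splitting and Markov bound for the auxiliary i.i.d.\ sequence and the ``$+C|t|^{-a}$'' correction term, while your version keeps everything as a deterministic scalar recursion on $(B_n,E_n)$ and never needs to introduce copies of $(M,X)$.
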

	\begin{proof}
		Denote $\eta = \sup_{t \in \mathbb{R}^d \setminus \{0 \}} \mathbb{E}[X(t)] < 1$ and $\theta = \sup_{t \in \mathbb{S}^{d-1}} \mathbb{E}[|M(t)|^{-a}X(t)] < 1$. Let $(M_i,X_i)_{i \geq 1}$ be i.i.d. copies of $(M,X)$, and
		\begin{align}
			& \tilde{M}_0 (t) := t, \quad \tilde{M}_{i} (t) := M_i \circ \cdots \circ M_1 (t), \\
			& \tilde{X}_0(t) := 1, \quad \tilde{X}_i(t) := X_i(\tilde{M}_{i-1}t) \cdots X_1(\tilde{M}_0 t), \quad \forall i \geq 1, \quad \forall t \in \mathbb{R}^d.
		\end{align}
		Let $\mathcal{F}_i := \sigma(M_1,\cdots M_i)$ for all $i$. By~\eqref{phi-t-E-A-it-T-t-epsilon-N-delta-t-1a}, we can choose $C>0$ large enough such that
		\begin{equation}\label{phi-t-le-E-phi-Mt-X-t-all-t}
			\varphi(t) \leq \mathbb{E}[\varphi(M(t))X(t)] + C|t|^{-a}, \quad \forall t \in \mathbb{R}^d \setminus \{  0 \}.
		\end{equation}
		Then by induction, 
		\begin{equation}\label{phi-t-le-E-phi-M-n-t-X-n-t-add}
			\varphi(t) \leq \mathbb{E}[\varphi(\tilde{M}_n (t)) \tilde{X}_n(t)] + C \sum_{i=0}^{n-1} \mathbb{E}[|\tilde{M}_i (t)|^{-a} \tilde{X}_i(t)], \quad \forall t \in \mathbb{R}^d \setminus \{  0 \}, \quad \forall n \geq 1.
		\end{equation}
		Using the definition of $\eta$ and the independence between $\tilde{X}_i$ and $\mathcal{F}_{i-1}$ for $i \geq 1$,
		\begin{equation}
			\mathbb{E}[\tilde{X}_i(t)] = \mathbb{E}[\mathbb{E}[X_i(\tilde{M}_{i-1}t) | \mathcal{F}_{i-1}] \tilde{X}_{i-1}(t)] \leq \eta \mathbb{E}[\tilde{X}_{i-1}(t)], \quad \forall i \geq 1, \quad \forall t \in \mathbb{R}^d \setminus \{ 0 \},
		\end{equation}
		which yields
		\begin{equation}\label{E-tildeX-i-t-le-eta-i}
			\mathbb{E}[\tilde{X}_i(t)] \leq \eta^i, \quad \forall i \geq 0, \quad \forall t \in \mathbb{R}^d \setminus \{ 0 \}.
		\end{equation}
		Besides, using the fact that $M(at) = aM(t)$ for $a > 0$ and $t \ne 0$, the definition of $\eta$, and the independence between $\tilde{M}_{i}$ and $\mathcal{F}_{i-1}$, we have
		\begin{align}
			\mathbb{E}[|\tilde{M}_i (t)|^{-a}\tilde{X}_i(t)] & = \mathbb{E}\big[ |\tilde{M}_{i-1}(t)|^{-a} \tilde{X}_{i-1}(t) \cdot  \mathbb{E}\big[  \big| M_i \big( \frac{\tilde{M}_{i-1}(t)}{|\tilde{M}_{i-1}(t)|} \big) \big|^{-a} X_i(\tilde{M}_{i-1}(t))  \big| \mathcal{F}_{i-1} \big]    \big] \notag \\
			& \leq \theta \mathbb{E}[|\tilde{M}_{i-1}(t)|^{-a} \tilde{X}_{i-1}(t)], \quad \forall i \geq 1, \quad \forall t \in \mathbb{R}^d \setminus \{ 0 \},
		\end{align}
		which yields
		\begin{equation}\label{E-tilde-M-i-t--a-i-t}
			\mathbb{E}[|\tilde{M}_i (t)|^{-a}\tilde{X}_i(t)] \leq \theta^i |t|^{-a}, \quad \forall i \geq 1, \quad \forall t \in \mathbb{R}^d \setminus \{ 0 \}.
		\end{equation}
		Using the bounds~\eqref{E-tildeX-i-t-le-eta-i} and~\eqref{E-tilde-M-i-t--a-i-t} and the fact that $\varphi(t) \leq c_1 := \sup_{ t \in \mathbb R^d}   \varphi(t)  < \infty$, we deduce from~\eqref{phi-t-le-E-phi-M-n-t-X-n-t-add} that
		\begin{equation}
			\varphi(t) \leq c_1 \eta^n + C\sum_{i=0}^{n-1} \theta^i |t|^{-a}, \quad \forall n \geq 1, \quad \forall t \in \mathbb{R}^d \setminus \{ 0 \}.
		\end{equation}
		Since $\eta < 1$ and $\theta < 1$, by taking $n \to \infty$, we obtain that
		\begin{equation}
			\varphi(t) \leq \frac{C|t|^{-a}}{1- \theta}, \quad \forall t \in \mathbb{R}^{d} \setminus \{ 0 \}. 
		\end{equation}
		So $\varphi(t) = O(|t|^{-a})$ as $|t| \to\infty$.
	\end{proof}
	
	The following result gives the decay rate of the characteristic function $\phi (t) = \mathbb E e^{i\langle t, Z\rangle} $
	as $ | t | \to \infty$.   
	\begin{lemma}\label{lem::Fourier-decays-a-order}
		Assume Conditions~\ref{cond::conditions_on_N}, \ref{cond::conditions_on_alpha}, \ref{cond::absolute-continuity}. Suppose that there exist $\delta_0 > 0$ and $a > 0$ such that
		\begin{equation}\label{sup-t-S-d-1-E-A-it-t--a}
			\sup_{t \in \mathbb{S}^{d-1}}  \mathbb{E}[|A_{i(t)}^T t|^{-a} \mathbbm{1}_{\{ N_{\delta_0}(t) \leq 1 \}}    ] < 1.
		\end{equation}
		Then, we have $|\phi(t)| = O(|t|^{-a})$ as $|t| \to \infty$.
	\end{lemma}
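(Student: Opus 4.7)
The plan is to apply the Gronwall-type Lemma~\ref{lem::poly-decay-characteristic-func} to $\varphi(t) := |\phi(t)|$. From the functional equation~\eqref{equ::functional-equation}, $|\phi(t)| \leq \mathbb{E}\big[\prod_{i=1}^N |\phi(A_i^T t)|\big]$. Fix a small $\varepsilon \in (0,1)$ and, using Lemma~\ref{lem::Fourier-transform-decays-to-0}, choose $c_0 > 0$ such that $\sup_{|s|\geq \delta_0|t|} |\phi(s)| \leq \varepsilon$ whenever $|t| \geq c_0$. I would then construct the random mapping and multiplier case by case,
\begin{align}
M(t) &:= A_{i(t)}^T t\,\mathbbm{1}_{\{N_{\delta_0}(t) \leq 1\}} + A_{j_*(t)}^T t\,\mathbbm{1}_{\{N_{\delta_0}(t) \geq 2\}}, \\
X(t) &:= \mathbbm{1}_{\{N_{\delta_0}(t) \leq 1\}} + \varepsilon \,\mathbbm{1}_{\{N_{\delta_0}(t) \geq 2\}},
\end{align}
where $j_*(t)$ denotes, say, the smallest index $j$ with $|A_j^T t| > \delta_0 |t|$ (well-defined on $\{N_{\delta_0}(t) \geq 1\}$). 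The scale invariance $M(ct)=cM(t)$ and $X(ct)=X(t)$ is immediate because $i(t)$, $j_*(t)$, and $N_{\delta_0}(t)$ depend only on the direction of $t$, the last by Lemma~\ref{lem::N-delta-t-has-larger-than-one-expectation}.

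The key bound $|\phi(t)| \leq \mathbb{E}[|\phi(M(t))|X(t)]$ for $|t|\geq c_0$ is obtained by splitting the product: on $\{N_{\delta_0}(t)\leq 1\}$ every factor other than $|\phi(A_{i(t)}^T t)|$ is trivially $\leq 1$, while on $\{N_{\delta_0}(t)\geq 2\}$ there exists some $j' \neq j_*(t)$ with $|A_{j'}^T t| > \delta_0|t|$, for which $|\phi(A_{j'}^T t)| \leq \varepsilon$; thus $\prod_i |\phi(A_i^T t)| \leq \varepsilon|\phi(A_{j_*(t)}^T t)|$. For the first integrability condition, $\mathbb{E}[X(t)] = 1 - (1-\varepsilon)\mathbb{P}[N_{\delta_0}(t)\geq 2]$; combining the uniform bound $\mathbb{E}[N_{\delta_0}(t)] \geq 1+\eta$ (Lemma~\ref{lem::N-delta-t-has-larger-than-one-expectation}) with Lemma~\ref{lem::N-equals-one-with-less-than-one-probability} applied to $\tilde N = N_{\delta_0}(t)$ gives $\inf_t \mathbb{P}[N_{\delta_0}(t)\geq 2] \geq \delta > 0$, so $\sup_t \mathbb{E}[X(t)] \leq 1 - (1-\varepsilon)\delta < 1$. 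For the second, $\mathbb{E}[|M(t)|^{-a}X(t)]$ splits into a piece on $\{N_{\delta_0}(t)\leq 1\}$ bounded by the standing hypothesis's constant $c_1 < 1$, and a piece on $\{N_{\delta_0}(t)\geq 2\}$ satisfying $|M(t)| = |A_{j_*(t)}^T t| > \delta_0$ for $t \in \mathbb{S}^{d-1}$ and hence bounded by $\varepsilon \delta_0^{-a}$. Choosing $\varepsilon < (1-c_1)\delta_0^a$ makes the total $< 1$, and Lemma~\ref{lem::poly-decay-characteristic-func} yields $|\phi(t)| = O(|t|^{-a})$.

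The main obstacle, as I see it, is precisely the verification of condition~(4) of Lemma~\ref{lem::poly-decay-characteristic-func}: the standing hypothesis only controls the negative $a$-th moment of $|A_{i(t)}^T t|$ on the event $\{N_{\delta_0}(t) \leq 1\}$, and on the complementary event nothing prevents $\mathbb{E}[|A_{i(t)}^T t|^{-a}\mathbbm{1}_{\{N_{\delta_0}(t) \geq 2\}}]$ from being infinite. The remedy in my plan is to replace $A_{i(t)}^T t$ by the ``large-direction'' index $A_{j_*(t)}^T t$ exactly on $\{N_{\delta_0}(t)\geq 2\}$; this substitution is compatible with the scale-invariance required by Lemma~\ref{lem::poly-decay-characteristic-func} and introduces the floor $|M(t)| > \delta_0|t|$ that tames the negative moment on this event. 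Once this case-dependent construction is in place, the remaining computations are routine and rest on the previously established Lemmas~\ref{lem::N-delta-t-has-larger-than-one-expectation}, \ref{lem::Fourier-transform-decays-to-0}, and~\ref{lem::N-equals-one-with-less-than-one-probability}.
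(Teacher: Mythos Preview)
Your proof is correct and takes a genuinely different route from the paper. Both arguments feed a functional inequality into the Gronwall-type Lemma~\ref{lem::poly-decay-characteristic-func}, but the paper keeps $M(t)=A_{i(t)}^T t$ on \emph{all} events and uses the richer multiplier $X(t)=\varepsilon^{N_\delta(t)-1}\mathbbm{1}_{\{N_\delta(t)\ge 1\}}+\mathbbm{1}_{\{N_\delta(t)=0\}}$ with a free parameter $\delta\in(0,\delta_0)$; to bound $\mathbb{E}[|M(t)|^{-a}X(t)]$ it then splits off $\varepsilon\cdot \sup_t\mathbb{E}[|A_{i(t)}^T t|^{-a}]$ and must invoke Condition~\ref{cond::absolute-continuity} to get this supremum finite. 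Your device of switching to the ``large-direction'' index $j_*(t)$ on $\{N_{\delta_0}(t)\ge 2\}$ installs the deterministic floor $|M(t)|>\delta_0|t|$ and thereby sidesteps that global $-a$-moment entirely, which is exactly the obstacle you flagged. In this sense your construction is more self-contained: it uses only the hypothesis~\eqref{sup-t-S-d-1-E-A-it-t--a} and does not need $a\le\varepsilon_0$.

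One small point to tighten: you invoke Lemma~\ref{lem::N-delta-t-has-larger-than-one-expectation} to get $\mathbb{E}[N_{\delta_0}(t)]\ge 1+\eta$, but that lemma only furnishes this for $\delta\le\delta_0'$ with $\delta_0'$ coming from the lemma, not the $\delta_0$ of the hypothesis. The fix is immediate: since $N_\delta(t)$ is nonincreasing in $\delta$, the event $\{N_{\delta'}(t)\le 1\}$ shrinks as $\delta'$ decreases, so the hypothesis~\eqref{sup-t-S-d-1-E-A-it-t--a} persists (with the same bound) when $\delta_0$ is replaced by any $\delta'\le\delta_0$. Hence you may assume without loss of generality that $\delta_0$ is small enough for Lemma~\ref{lem::N-delta-t-has-larger-than-one-expectation} to apply; the rest of your argument goes through unchanged.
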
	
	\begin{proof}
		Let $\varepsilon \in (0, 1)$ and $\delta > 0$. By Lemma~\ref{lem::Fourier-transform-decays-to-0}, there exists $t_\varepsilon > 0$ such that for all $t \in \mathbb{R}^d$ with $|t| > t_\varepsilon$, we have $|\phi(t)| < \varepsilon$. Using~\eqref{equ::functional-equation} and the facts that $|\phi(A_i^T t)| \leq \varepsilon$ if $|A_i^T t| > t_\varepsilon$ and $|\phi(A_i^T t)| \leq 1$ for all $t$, we see that if $|t| > \frac{t_\varepsilon}{\delta}$, then
		\begin{align}\label{phi-t-E-A-it-T-t-epsilon-N-delta-t-1}
			|\phi(t)| \leq \mathbb{E}\big[|\phi(A_{i(t)}^T t)| \big(\varepsilon^{N_\delta(t) - 1} \mathbbm{1}_{\{ N_\delta(t) \geq 1 \}} + \mathbbm{1}_{\{ N_\delta(t) = 0 \}}\big)\big] =: \mathbb{E}[|\phi(M(t))|X(t)],
		\end{align}
		where $i(t) = i\big( \frac{t}{|t|} \big)$ for $t \in \mathbb{R}^d \setminus \{ 0 \}$, $M$ is the random mapping $t \mapsto A_{i(t)}^T t$ and
		\begin{equation}
			X(t) := \varepsilon^{N_\delta(t) - 1} \mathbbm{1}_{\{ N_\delta(t) \geq 1 \}} + \mathbbm{1}_{\{ N_\delta(t) = 0 \}}, \quad \forall t \in \mathbb{R}^d \setminus \{ 0 \}.
		\end{equation}
		Note that $M(at) = aM(t)$ and $X(at) = X(t)$ for each   scalar $a > 0$ and each vector $t \in \mathbb{R}^d \setminus \{ 0 \}$, and $X(\cdot)$ depends on $\varepsilon$ and $\delta$. 
		
		We claim that there exist $\varepsilon \in (0,1)$ and $\delta < \delta_0$ such that 
		\begin{equation}\label{sup-t-S-d-1-E-X-t-le-1}
			\eta_{\varepsilon, \delta} := \sup_{t \in \mathbb{R}^d \setminus \{0 \}} \mathbb{E}[X(t)] < 1, \quad \mbox{ and } \quad \theta_{\varepsilon, \delta} := \sup_{t \in \mathbb{S}^{d-1}} \mathbb{E}[|M(t)|^{-a}X(t)] < 1.
		\end{equation}
		By Lemma~\ref{lem::N-delta-t-has-larger-than-one-expectation}, there exist $\delta_1 \in (0, \delta_0]$ and $\eta > 0$ such that
		\begin{equation}
			\mathbb{E}[N_\delta(t)] > 1 + \eta, \quad \forall t \ne 0, \quad \forall \delta \in (0,\delta_1).
		\end{equation}
		From Lemma~\ref{lem::N-equals-one-with-less-than-one-probability}, this implies that there exists $\varepsilon_1 > 0$ such that 
		\begin{equation}
			\mathbb{P}[N_\delta(t) \leq 1] \leq 1 - \varepsilon_1, \quad \forall t \ne 0, \quad \forall \delta \in (0,\delta_1).
		\end{equation}
		Then, we have
		\begin{equation}\label{p-epsilon-delta-t-le-P-N-delta-t-le-1-+-epsilon}
			\mathbb{E}[X(t)] \leq  \mathbb{P}[N_\delta(t) \leq 1] + \varepsilon \leq 1 - \varepsilon_1 + \varepsilon, \quad \forall t \ne 0, \quad \forall \delta \in (0, \delta_1), \quad \forall \varepsilon \in (0,1).
		\end{equation}
		So $\eta_{\varepsilon, \delta} < 1$ for all $\varepsilon \in (0,\varepsilon_1)$ and $\delta \in (0,\delta_1)$. 
		Note that $N_\delta(t)$ is decreasing in $\delta$ for each $t \in \mathbb{R}^d \setminus \{ 0 \}$. Using this fact, we have
		\begin{align}
			\theta_{\varepsilon, \delta} & \leq \varepsilon \cdot  \sup_{t \in \mathbb{S}^{d-1}}\mathbb{E}[|M(t)|^{-a}] + \sup_{t \in \mathbb{S}^{d-1}}\mathbb{E}[|M(t)|^{-a} \mathbbm{1}_{\{ N_{\delta_0}(t) \leq 1 \}}] =: c_1 \varepsilon + c_2, \quad \forall \delta \in (0,\delta_0),
		\end{align}
		where $c_1 := \sup_{t \in \mathbb{S}^{d-1}}\mathbb{E}[|M(t)|^{-a}]$ and $c_2 := \sup_{t \in \mathbb{S}^{d-1}}\mathbb{E}[|M(t)|^{-a} \mathbbm{1}_{\{ N_{\delta_0}(t) \leq 1 \}}]$. By~\eqref{sup-t-S-d-1-E-A-it-T-t-epsilon-0} and~\eqref{sup-t-S-d-1-E-A-it-t--a}, we know that $c_1 < \infty$ and $c_2 < 1$. Then, we have $\theta_{\varepsilon,\delta} < 1$ for all $\varepsilon \in (0,\frac{1- c_2}{c_1})$ and $\delta \in (0,\delta_0)$. This proves the claim. 
		
		We fix $\varepsilon \in (0,1)$ and $\delta < \delta_0$ such that~\eqref{sup-t-S-d-1-E-X-t-le-1} holds, { so $X$ and $M$ satisfy the conditions in Lemma~\ref{lem::poly-decay-characteristic-func}.} 
		It follows from Lemma~\ref{lem::poly-decay-characteristic-func} that $|\phi(t)| = O(|t|^{-a})$ as $|t| \to \infty$. 
	\end{proof}
	
	Recall that  $\{ (N_u, A_{u1},A_{u2},\cdots) \}_{u \in \mathbb{U}}$ are i.i.d., with the same distribution  as $(N,A_1,A_2,\cdots)$. Let 
	$G_u := A_{u|1}\cdots A_{u| |u|}$ for $u \in \mathbb{U}$, 
	with the convention that $G_\emptyset$ is the identity matrix. For $n \geq 0$, let 
	$T$ be the Galton-Watson tree generated by $(N_u)$: by definition, $\emptyset \in T$; when $u \in T$, then $ui \in T$ if and only if $1\leq i \leq N_u$. Let 
	$T_n = \{ u \in T: |u| =n\} $ be the set of individuals of generation $n$.  
	For each $u \in \mathbb{U}$, denote by $T^u$ the shifted tree at $u$, whose defining elements are $\{N_{uv}: v \in \mathbb{U}\}$: we have $\emptyset \in T^u$; when $v \in T^u$, then $vi \in T^u$ if and only if $1\leq i \leq N_{uv}$.  For $n \geq 0$, let 
	$$T_n^u = \{  v \in T^u : |v| =n\}$$  be the set of individuals of generation $n$ of the tree $T^u$. 
	Notice that $T=T^\emptyset$ and $T_n = T_n^\emptyset$. 
	Set for each $u \in \mathbb{U}$, 
	\begin{align}
		N^u(t, n) & = \# \{ v \in T_n^u : G_{u, v}^T t \ne 0 \}, \notag \\
		 N(t, n) & = \# \{ v \in T_n : G_v^T t \neq 0 \}, 
		\quad  \forall n \geq 1, \;  \forall t \in \mathbb{R}^d,
	\end{align}
	where $G_{u, v} = A_{v|(|u|+1)} \cdots A_{v||v|}$ for $v \in T^u$.
	Note that $N(t, n) = N^\emptyset (t, n)$ and $N(t) = N(t,1)$ for all $t$ and $n \geq 1$. Since $\mathbb{P}[N(t) \geq 1, \forall t \ne 0] = 1$, we know that almost surely, $N(t, n)$ is non-decreasing  in $n$, for all $t \ne 0$.
	The following result shows that, under suitable conditions,  as $n \to \infty$, $N(t, n) \to +\infty$ uniformly for  $t \in \mathbb{R}^d \setminus \{ 0 \}$. 
	
	\begin{lemma}\label{lem::inf-N-t-can-be-large}
		Under Conditions~\ref{cond::conditions_on_N}, \ref{cond::absolute-continuity}, we have  
		\begin{equation}\label{P-sharp-u-in-T-n-G-u-T-t-ne-0-ge-M}
			\lim_{n \to \infty} \inf_{t \in \mathbb{R}^d \setminus \{ 0 \}} N(t, n) = +\infty, \quad \mbox{a.s.}
		\end{equation}
	\end{lemma}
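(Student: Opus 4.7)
My approach combines the uniform offspring-mean bound of Lemma~\ref{lem::N-delta-t-has-larger-than-one-expectation} with compactness of $\mathbb{S}^{d-1}$. First, since $G_v^T t\neq 0\iff G_v^T(t/|t|)\neq 0$, we have $N(t,n)=N(t/|t|,n)$, so the infimum runs over the compact sphere $\mathbb{S}^{d-1}$. Applying Condition~\ref{cond::absolute-continuity} (the a.s.\ lower bound $N(s)\geq 1$) to every direction $s=G_v^T t/|G_v^T t|$ attached to a live vertex $v\in T_n$ shows by induction that $N(t,n+1)\geq N(t,n)$ a.s.\ for every $t\neq 0$. Consequently $L:=\lim_n\inf_{t\in\mathbb{S}^{d-1}}N(t,n)$ exists in $[1,+\infty]$ a.s., and it suffices to prove $\mathbb{P}[L\geq M]=1$ for each integer $M\geq 1$.

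For the pointwise convergence $N(t,n)\to+\infty$ a.s.\ at each fixed $t$, I combine Lemmas~\ref{lem::N-delta-t-has-larger-than-one-expectation} and~\ref{lem::N-equals-one-with-less-than-one-probability} to obtain $\inf_{t\in\mathbb{S}^{d-1}}\mathbb{P}[N_{\delta_0}(t)\geq 2]\geq\rho$ for some $\delta_0,\rho>0$. Conditionally on $F_n$, if $N(t,n)=k$ with live vertices $v_1,\ldots,v_k\in T_n$ of directions $s_i=G_{v_i}^T t/|G_{v_i}^T t|$, then the numbers $N_{v_i}(s_i)$ of live generation-$(n+1)$ descendants are independent, each $\geq 1$ a.s.\ and $\geq 2$ with probability $\geq\rho$, uniformly in direction. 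Therefore the conditional law of $N(t,n+1)$ given $F_n$ stochastically dominates a sum of $k$ i.i.d.\ copies of $1+\mathrm{Bernoulli}(\rho)$, and by induction $N(t,n)$ dominates a Galton--Watson process with this offspring law, started from one individual. The latter has mean $1+\rho>1$ and offspring $\geq 1$ a.s., so it has no extinction and diverges to $+\infty$ a.s.; hence $N(t,n)\to+\infty$ a.s.\ for every fixed $t$.

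To pass to uniformity, assume for contradiction that $\mathbb{P}[L\leq M-1]>0$ for some $M\geq 2$. The bad set $B_n(M):=\{t\in\mathbb{S}^{d-1}:N(t,n)<M\}$ is closed (by lower semi-continuity of $t\mapsto\mathbbm{1}_{G_v^T t\neq 0}$, hence of $N(\cdot,n)$) and nonincreasing in $n$. On the bad event, by compactness, the intersection $\bigcap_n B_n(M)$ is non-empty; a standard measurable selection yields $t^\star$ in it. The live subtree $T(t^\star):=\{v:G_v^T t^\star\neq 0\}$ has strictly less than $M$ vertices at every generation yet, by Condition~\ref{cond::absolute-continuity}, contains at least one vertex at each level; K\"onig's lemma therefore produces an infinite branch. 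Shifting the process to this branch and running the pointwise divergence argument along the evolved directions $s_n=G_{v_n}^T t^\star/|G_{v_n}^T t^\star|$ should yield $N(t^\star,n)\to+\infty$, contradicting the width bound $<M$.

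The main obstacle is that $t^\star$ is $F_\infty$-measurable, so the subtrees along the K\"onig branch are not automatically independent of $t^\star$, and the pointwise argument does not apply naively. I resolve this by replacing $t^\star$ with an $F_n$-measurable minimizer $t_n^\star\in\arg\min_{t}N(t,n)$ (which exists since $N(\cdot,n)$ is piecewise constant on a finite $F_n$-measurable partition of $\mathbb{S}^{d-1}$ determined by the kernels $\{\ker G_v^T:v\in T_n\}$), and applying the conditional expectation bound $\mathbb{E}[N(t_n^\star,n+k)\mid F_n]\geq(1+\eta)^k N(t_n^\star,n)$---valid because the matrices at generations $>n$ are independent of $F_n$ and of $t_n^\star$. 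Combined with the monotonicity $N(t_n^\star,\cdot)$ and lower semi-continuity of $N(\cdot,n)$ as $t_n^\star\to t^\star$ along a subsequence, this forces an exponential growth of $N(t^\star,n)$ incompatible with $N(t^\star,n)\leq M-1$, delivering the contradiction.
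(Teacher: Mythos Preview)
Your first two paragraphs are fine: homogeneity, monotonicity in $n$, and the pointwise a.s.\ divergence of $N(t,n)$ for each fixed deterministic $t$ are all correct and proved essentially as in the paper. The difficulty is entirely in passing to uniformity over $t$, and there your argument has a genuine gap.

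The conditional bound $\mathbb{E}[N(t_n^\star,n+k)\mid\mathcal{F}_n]\geq(1+\eta)^k N(t_n^\star,n)$ is correct, but it does not yield a contradiction. On the bad event $\{L\leq M-1\}$ what stays bounded is $\inf_t N(t,n+k)=N(t_{n+k}^\star,n+k)$, \emph{not} $N(t_n^\star,n+k)$; the minimiser moves with $n$, and nothing prevents $N(t_n^\star,n+k)$ from being large while $N(t_{n+k}^\star,n+k)\leq M-1$. Your appeal to lower semi-continuity and a subsequential limit $t^\star$ does not help either: l.s.c.\ gives $N(t^\star,m)\leq\liminf_j N(t_{n_j}^\star,m)$, which is the wrong inequality for transferring growth, and since $t^\star$ is $\mathcal{F}_\infty$-measurable neither the pointwise result nor any conditional-independence argument applies to it. The same measurability obstacle is precisely why you (rightly) abandoned the K\"onig-branch idea in the previous paragraph; replacing $t^\star$ by $t_n^\star$ does not remove it, because the contradiction still requires comparing values at \emph{different} minimisers.

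What makes the paper's proof work is a structural fact you do not use: the level sets of $N(\cdot,n)$ are governed by linear subspaces (the kernels $\ker G_v^T$), and a linear subspace contained in a finite union of subspaces with pairwise trivial intersection must lie entirely in one of them. Concretely, if $N(t,1)=1$ for every $t$ in a subspace $D$, then $D\subset K_{i_1}:=\bigcap_{j\neq i_1}\ker A_j^T$ for a unique $i_1$, so the problem descends to $A_{i_1}^T D$ and, after the first branching time, to strictly lower-dimensional subspaces. This allows an induction on $\dim D$ that reduces the uncountable uniformity to finitely many subproblems; once $\inf_t N(t,n)\geq 2$ is established, iteration gives $\geq 2^k$. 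A pure compactness-plus-expectation argument, without this algebraic reduction, does not seem to close.
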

	We remark  that in the lemma the condition $\mathbb{P}[N(t) \geq 1, \forall t \ne 0] = 1$ in \ref{cond::absolute-continuity}  cannot be relaxed to  $\mathbb{P}[N(t) \geq 1] = 1$ for all $t \ne 0$. 
	\begin{proof}[Proof of Lemma \ref{lem::inf-N-t-can-be-large}]
		
		(1) 
		We first prove that for any fixed $t \in \mathbb{R}^d \setminus \{ 0 \}$, 
		\begin{equation}\label{lim-n-N-t-n-ge-2-all-t-as}
			\lim_{n \to \infty} N(t, n) \geq 2, \quad \mbox{a.s.}
		\end{equation}
		By Lemmas~\ref{lem::N-delta-t-has-larger-than-one-expectation} and \ref{lem::N-equals-one-with-less-than-one-probability}, 
		we know that there exists $\delta > 0$ such that
		\begin{equation}\label{P-N-t-ge-2-ge-delta}
			\mathbb{P}[N(t) \leq 1] < 1- \delta,
			\quad \forall t \in \mathbb{R}^d \setminus \{ 0 \}.
		\end{equation}
		Since a.s. $1\leq N(t, n-1) \leq N(t, n)$  for all $n \geq 1$ and  $t  \in \mathbb{R}^d \setminus \{ 0 \}$,  we know that when  $ N(t, n) =1$, then  $ N(t, n-1)= 1$, so that there is a unique $u \in T_{n-1}$ such that $G^T_u t \ne 0$, and that $N^u( G_u^Tt, 1) =1$. 
		Therefore,  
		for each $n \geq 1$, 
		\begin{equation}
			\mathbb{P}[N(t, n) = 1] = \mathbb{E}[\mathbbm{1}_{\{ N(t, n-1) = 1\}} \mathbb{E}[ 1_{\{ N^u (G^T_u t, 1) = 1 \} }| {\mathcal F}_{n-1}]] \leq (1-\delta) \mathbb{P}[N(t, n-1) = 1],
		\end{equation}
		where
		$${\mathcal F}_{0} =  \{ \emptyset, \Omega \}, \quad {\mathcal F}_{k} =  \sigma (N_u, A_{ui}:  |u| < k, i \geq 1 \} \mbox{  for } k \geq 1. $$ 
		Here
		we have used 
		~\eqref{P-N-t-ge-2-ge-delta}  
		together with  the fact that for each fixed $u$, 
		$N^u (t, 1)$ has the same distribution as  $N (t, 1) = N(t)$. 
		It follows by induction that
		\begin{equation}\label{P-N-t-n-=-1-le-1-delta-n}
			\mathbb{P}[N(t, n) = 1] \leq (1-\delta)^n, \quad \forall n \geq 1.
		\end{equation}
		Therefore, 
		\begin{equation}
			\sum_{n \geq 1} \mathbb{P}[N(t, n) = 1] \leq \sum_{n \geq 1} (1 - \delta)^n < \infty.
		\end{equation}
		By the Borel-Cantelli lemma and the fact that $N(t, n)$ is non-decreasing in $n$, we get ~\eqref{lim-n-N-t-n-ge-2-all-t-as}.
		
		(2) We next prove that
		
		\begin{equation}\label{lim-n-inf-t-R-d-setminus-0-N-t-n-ge-2}
			\lim_{n \to \infty} \inf_{t \in \mathbb{R}^d \setminus \{ 0 \}} N(t, n) \geq 2, \quad \mbox{a.s.}
		\end{equation}
		To this end we will prove that for any linear subspace $D \subset \mathbb{R}^d$,
		\begin{equation}\label{P-N-t-n-ge-2-t-in-D-setminus-0}
			\lim_{n \to \infty} \inf_{t \in D \setminus \{ 0 \}} N(t, n) \geq 2, \quad \mbox{a.s.,}
		\end{equation}
		with the convention that  $ \inf \emptyset   =\infty$.
		Then taking $D= \mathbb R^d$ gives the desired conclusion. 
		
		We will prove \eqref{P-N-t-n-ge-2-t-in-D-setminus-0} by induction on the dimension of $D$. When  $\dim D = 0$, there is nothing to prove. 
		By~\eqref{lim-n-N-t-n-ge-2-all-t-as} and the fact that $N(t, \cdot) = N(at, \cdot)$ for any vector $t \in \mathbb{R}^d \setminus \{ 0 \}$ and scalar $a > 0$, we know that  \eqref{P-N-t-n-ge-2-t-in-D-setminus-0} holds when  $\dim D = 1$.
		
		
		Suppose that~\eqref{P-N-t-n-ge-2-t-in-D-setminus-0} holds for all linear subspaces of  dimension at most $k$ with $1 \leq k < d$. Let $D$ be a linear subspace with dimension $k+1$. Define
		\begin{equation}
			n_1 = \inf\{ n \geq 1 : \exists t \in D \setminus \{ 0\}, N(t, n) \geq 2  \}, 
		\end{equation}  
		with the convention that $\inf \emptyset = +\infty$. We know that $n_1 < +\infty$ a.s. because if we fix an $t \in D$, then we can use~\eqref{lim-n-N-t-n-ge-2-all-t-as} to deduce that a.s. $N(t, n) \geq 2$ for $n$ large enough. 
		%
		%
		%
		For each $u \in \mathbb{U}$, define
		\begin{equation}
			K_{ui} = \bigcap_{j \ne i} \ker A_{uj}^T, \quad \forall i = 1, \cdots, N_u,\quad \mbox{if $N_u \geq 2$}.
		\end{equation}
		and define $K_{u1} = \mathbb{R}^d$ if $N_{u} = 1$. 
		By Condition~\ref{cond::absolute-continuity}, for each $u \in \mathbb{U}$, $\mathbb P [ \forall t \in \mathbb R^d \setminus \{0\}, N^u (t, 1) \geq 1] =1$.    
		Using this condition,  
		we see that  a.s., 
		for each $t \in \mathbb{R}^d \setminus \{ 0 \}$ and $u \in \mathbb{U}$,
		$K_{ui} \cap K_{uj} = \{ 0 \}$ for all $1 \leq i \ne j \leq N_u$, and 
		\begin{align}
			N^u(t, 1)  
			= 1 \quad & \Leftrightarrow \quad t \in \bigcup_{i=1}^{N_u} K_{ui}. 
			\label{sharp-i-A-ui-T-t-ne-0}
		\end{align}
		
		We now show that a.s. there exists a unique $u = u(n_1) \in T_{n_1 - 1}$ such that 
		\begin{equation}\label{dim-G-T-u-D-equals-dim-D-G-T-v-D-0-other-v}
			\dim G^T_u D = \dim D, \quad \mbox{ and } \quad G^T_v D = \{ 0 \} \quad \mbox{ if } v \in T_{n_1 - 1} \setminus\{ u\}.
		\end{equation}
		Since this holds for $n_1 = 1$ (because we can take $u = \emptyset \in \mathbb{U}$),  we  only consider the case $n_1 > 1$. In this case, by the definition of $n_1$,  
		for each $t \in D \setminus \{ 0 \}$, we have $N(t) = N(t, 1) = 1$,  so that $ t \in  \bigcup_{i = 1}^N K_i$ by  \eqref{sharp-i-A-ui-T-t-ne-0}. 
		Thus, 
		\begin{equation}
			D \subset \bigcup_{i = 1}^N K_i.
		\end{equation}
		Since { each $K_i$ is a linear subspace of $\mathbb{R}^d$} and  $K_i \cap K_j = \{ 0 \}$ for $i \ne j$, we know that there exists a unique $i_1 \in \{ 1, \cdots, N \}$ such that $D \subset K_{i_1}$ and $D \cap K_i = \{ 0 \}$ for $i \ne i_1$. This implies that  $D \cap \ker A^T_{i_1} = \{0 \}$, and $D \subset \ker A^T_i$ for $i \neq i_1$.
		Therefore $\dim A_{i_1}^T D = \dim D$ and $A_i^T D = \{ 0 \}$ for $i \ne i_1$. Then, if $n_1 > 2$, then we repeat this argument for $(N_{i_1}, A_{i_11}, A_{i_12},\cdots)$ to get that there exists $i_1i_2 \in T_2$ such that $\dim G^T_{i_1i_2} D = \dim D$ and $\dim G^T_v D = \{ 0 \}$ for $v \in T_2$ with $v \ne i_1i_2$. We can repeat this argument to get an $u \in T_{n_1 - 1}$ such that  \eqref{dim-G-T-u-D-equals-dim-D-G-T-v-D-0-other-v} holds. 
		%
		
		Notice that for each $ t \in \mathbb R^d \setminus \{0\}$, and for 
		$0\leq l \leq n$,
		$ N(t, n) = \sum_{ v \in T_l} N^v ( G_v^T t,  n-l) $, so that 
		$ N(t, n) \geq  N^u ( G_u^T t,  n-n_1+1)$, { where $u = u(n_1) \in T_{n_1-1}$ satisfies~\eqref{dim-G-T-u-D-equals-dim-D-G-T-v-D-0-other-v}}; consequently,
		$$ \inf_{t \in D \setminus \{ 0 \}} N (t, n) 
		\geq   \inf_{t \in G_u^T  (D \setminus \{ 0 \})} N^u(t, n-n_1+1).$$
		Now $  G_u^T  (D) = G_u^T  (D \setminus \{ 0 \}) \cup G_u^T (\{ 0 \})$, so 
		$$  G_u^T  (D) \setminus \{0\} =   G_u^T  (D \setminus \{ 0 \}) \setminus \{0\}
		=  G_u^T  (D \setminus \{ 0 \}) ,$$  where the last equality holds 
		since $D \cap \ker  G_u^T = \{0\}$  by \eqref{dim-G-T-u-D-equals-dim-D-G-T-v-D-0-other-v}.  
		Therefore, 
		to prove~\eqref{P-N-t-n-ge-2-t-in-D-setminus-0} for $ D$ (with dimension $k+1$),
		it suffices to prove that, with $n_1 \geq 1$ and $u  = u(n_1) \in T_{n_1 - 1}$ defined above, 
		\begin{equation}\label{lim-n-inf-t-in-G-u-T-D-setminus-0-N-t-n-ge-2}
			\lim_{n \to \infty} \inf_{t \in G_u^T D \setminus \{ 0 \}} N^u(t, n) \geq 2, \quad \mbox{a.s.}.
		\end{equation}
		By the definition of $n_1$ and $u = u(n_1)$, we know that there exists some $s \in G_u^T D$ such that $N^u(s, 1) \geq 2$, so by~\eqref{sharp-i-A-ui-T-t-ne-0},
		\begin{equation}\label{G-u-T-D-ne-0-cup-t-N-t-1-1}
			G_u^T D \ne 
			\{ 0 \}\cup\{ t : N^u(t, 1) = 1 \} = \bigcup_{i = 1}^{N_u} K_{ui}.
		\end{equation}
		Thus $G_u^T D$ contains an element which does not belong to $K_{ui}$ for any 
		$1\leq i \leq N_u$. 
		This implies that 
		$$\dim (G_u^T D \cap K_{ui}) < \dim G_u^T D = \dim D = k + 1.$$
		Thus by the induction hypothesis, we know that for each $i = 1\, \cdots, N_{u}$,
		\begin{equation}\label{lim-n-inf-t-in-G-u-T-D-cap-K-i}
			\lim_{n \to \infty} \inf_{t \in ((G_u^T D) \cap K_{ui}) \setminus \{0  \}} N^u(t, n) \geq 2, \quad \mbox{a.s.}
		\end{equation}
		But using the equality in~\eqref{G-u-T-D-ne-0-cup-t-N-t-1-1} and using the inverse of~\eqref{sharp-i-A-ui-T-t-ne-0}, we know that
		\begin{equation}\label{inf-t-in-G-u-T-D-setminus-cup-i-K-i-setminus-0}
			\inf_{t \in ((G_u^T D) \setminus \bigcup_{i} K_{ui}) \setminus \{0  \}} N^u(t, 1) \geq 2.
		\end{equation}
		Combining~\eqref{lim-n-inf-t-in-G-u-T-D-cap-K-i} and~\eqref{inf-t-in-G-u-T-D-setminus-cup-i-K-i-setminus-0}, we get~\eqref{lim-n-inf-t-in-G-u-T-D-setminus-0-N-t-n-ge-2}. So 
		~\eqref{P-N-t-n-ge-2-t-in-D-setminus-0} still holds when $\dim D = k+ 1$. 
		Thus, by induction, \eqref{P-N-t-n-ge-2-t-in-D-setminus-0}  holds for any linear subspace $D$ of $\mathbb R^d$. 
		
		
		(3) We then prove that 
		\begin{equation}\label{lim-n-inf-t-R-d-setminus-0-N-t-n-ge-4}
			\lim_{n \to \infty} \inf_{t \in \mathbb{R}^d \setminus \{ 0 \}} N(t, n) \geq 4, \quad \mbox{a.s.}
		\end{equation}
		By~\eqref{lim-n-inf-t-R-d-setminus-0-N-t-n-ge-2}, we let $k_1$ be a random integer such that a.s.
		\begin{equation}
			\inf_{t \in \mathbb{R}^d \setminus \{0 \}} N(t, k_1) \geq 2.
		\end{equation}
		Again by~\eqref{lim-n-inf-t-R-d-setminus-0-N-t-n-ge-2}, we know that a.s. for each $u \in T_{k_1}$ we have
		\begin{equation}\label{lim-n-infty-inf-t-tilde-N-u-t-n-ge-2}
			\lim_{n \to \infty} \inf_{t \in \mathbb{R}^d \setminus \{ 0\}} N^u(G_u^Tt, n) \geq 2.
		\end{equation}
		Let $k_2$ be a (random) integer such that $\inf_{t \ne 0} N^u(G_u^Tt, k_2) \geq 2$ for all $u \in T_{k_1}$. Then, for all $t \ne 0$, there exist two distinct $u_1, u_2$ in $T_{k_1}$ such that $G_{u_1}^T t \ne 0 \ne G_{u_2}^T t$, hence furthermore there exist four distinct $u_{i_1i_2}$, such that $u_{i_1} \leq u_{i_1i_2}  $ and
		\begin{equation}
			G_{u_{i_1i_2}}^T t \ne 0, \quad i_1, i_2 = 1, 2.
		\end{equation}
		This shows that a.s.
		\begin{equation}
			\inf_{t \ne 0} N(t, k_1 + k_2) \geq 4.
		\end{equation}
		So~\eqref{lim-n-inf-t-R-d-setminus-0-N-t-n-ge-4} holds.
		
		(4) We can repeatedly argue as in part (3) to derive that for any integer $k \geq 1$, we have
		\begin{equation}
			\lim_{n \to \infty} \inf_{t \in \mathbb{R}^d \setminus \{ 0 \}} N(t, n) \geq 2^k, \quad \mbox{a.s.}
		\end{equation}
		Thus  \eqref{P-sharp-u-in-T-n-G-u-T-t-ne-0-ge-M} follows. 

	\end{proof}
	
	\begin{proof}[Proof of Theorem~\ref{thm::absolute-continuity}]
		(1) We first  prove that $|\phi(t)| = O(|t|^{-a})$ as $|t| \to \infty$ for some $a > 0$. 
		By Lemma \ref{lem::Fourier-decays-a-order}, 
		it suffies to verify the inequality~\eqref{sup-t-S-d-1-E-A-it-t--a}  for some $\delta_0 > 0$ and $a > 0$, that is, 
		\begin{equation}\label{2-sup-t-S-d-1-E-A-it-T-t--a}
			\sup_{t \in \mathbb{S}^{d-1}}  \mathbb{E}[|A_{i(t)}^T t|^{-a} \mathbbm{1}_{\{ N_{\delta_0}(t) \leq 1 \}}    ] < 1.
		\end{equation}
		
		From~\eqref{sup-t-S-d-1-E-A-it-T-t-epsilon-0} in Condition~\ref{cond::absolute-continuity}, we know that the set of random variables $\{ |A_{i(t)}^T t|^{-\varepsilon } :t \in \mathbb{S}^{d-1}, \varepsilon \in [0,\varepsilon_0 / 2]  \}$ is uniformly integrable, since  if we denote $X := |A_{i(t)}^T t|^{-1}$ for any $t \in \mathbb{S}^{d-1}$, then for any $M > 1$ and $\varepsilon \in (0,\varepsilon_0 / 2]$,
		\begin{align}
			\mathbb{E}[X^\varepsilon \mathbbm{1}_{\{ X^\varepsilon \geq M \}}] & = \mathbb{E}[X^{\varepsilon_0} X^{\varepsilon - \varepsilon_0} \mathbbm{1}_{\{ X \geq M^{1/\varepsilon} \}}] \notag \\
			& \leq  \mathbb{E}[X^{\varepsilon_0} M^{(\varepsilon - \varepsilon_0)/\varepsilon} \mathbbm{1}_{\{ X \geq M^{1/\varepsilon} \}}] \notag \\
			& \leq M^{1 - \frac{\varepsilon_0}{\varepsilon}} \mathbb{E}[X^{\varepsilon_0}] \notag \\
			& \leq M^{-1} \mathbb{E}[X^{\varepsilon_0}]. 
		\end{align}
		
		By Lemmas~\ref{lem::N-delta-t-has-larger-than-one-expectation} and~\ref{lem::N-equals-one-with-less-than-one-probability}, we know that there exist $\delta_1 > 0$ and $\eta > 0$ such that
		\begin{equation}\label{t-S-d-1-P-N-delta-t-le-1-le-1-eta}
			\sup_{t \in \mathbb{S}^{d-1}, \delta \in [0,\delta_1]} \mathbb{P}[N_\delta(t) \leq 1] \leq 1 - \eta.
		\end{equation}
		Then, for $a > 0$, $\delta \in [0, \delta_1]$ and $t \in \mathbb{S}^{d-1}$,
		\begin{align}
			& \sup_{t \in \mathbb{S}^{d-1}}  \mathbb{E}[|A_{i(t)}^T t|^{-a} \mathbbm{1}_{\{ N_{\delta}(t) \leq 1 \}} ] \notag \\
			\leq 
			& \sup_{t \in \mathbb{S}^{d-1}}  \mathbb{E}[(|A_{i(t)}^T t|^{-a}-1) \mathbbm{1}_{\{ N_{\delta}(t) \leq 1 \}} ] 
			+ 
			\sup_{t \in \mathbb{S}^{d-1}, \delta \in [0,\delta_1]} \mathbb{P}[N_\delta(t) \leq 1] \notag \\
			\leq & \sup_{t \in \mathbb{S}^{d-1}}  \mathbb{E}[(|A_{i(t)}^T t|^{-a}-1) \mathbbm{1}_{\{ N_{\delta}(t) \leq 1 \}} ]  + 1 - \eta.
		\end{align}
		Thus, in order to prove~\eqref{2-sup-t-S-d-1-E-A-it-T-t--a}, it suffices to prove that there exist $a > 0$ and $\delta_0 \in [0,\delta_1]$ such that
		\begin{equation}\label{sup-t-S-d-1-eta-2}
			\sup_{t \in \mathbb{S}^{d-1}}  \mathbb{E}[(|A_{i(t)}^T t|^{-a}-1) \mathbbm{1}_{\{ N_{\delta_0}(t) \leq 1 \}} ] \leq \frac{\eta}{2}.
		\end{equation}
		We will prove this in the following. By  Lemma~\ref{lem::N-delta-equals-zero-probability},
		\begin{equation}
			\lim_{\delta \to 0+} \sup_{t \in \mathbb{S}^{d-1}} \mathbb{P}[|A_{i(t)}^T t| \leq \delta] = 0.
		\end{equation}
		Then, using the uniform integrability of $\{ |A_{i(t)}^T t|^{-\varepsilon } :t \in \mathbb{S}^{d-1}, \varepsilon \in [0,\varepsilon / 2]  \}$, we obtain
		\begin{equation}
			\lim_{\delta \to 0+}\sup_{t \in \mathbb{S}^{d-1}, \varepsilon \in [0,\varepsilon_0 / 2]}  \mathbb{E}[|A_{i(t)}^T t|^{-\varepsilon} \mathbbm{1}_{\{ |A_{i(t)}^T t| \leq \delta  \}}] = 0.
		\end{equation}
		In particular, there exists $\delta_0 \in [0,\delta_1]$ such that
		\begin{align}
			\sup_{t \in \mathbb{S}^{d-1}, \varepsilon \in [0,\varepsilon_0 / 2]}  \mathbb{E}[|A_{i(t)}^T t|^{-\varepsilon} \mathbbm{1}_{\{ |A_{i(t)}^T t| \leq \delta_0  \}}] \leq \frac{\eta}{4}.
		\end{align}
		Choose $a > 0$ small enough such that $a < \frac{\varepsilon_0}{2}$ and $\delta_0^{-a} - 1 < \frac{\eta}{4}$. Then, from the above display, we have,  for each $t \in \mathbb{S}^{d-1}$, 
		\begin{align}
			& \mathbb{E}[(|A_{i(t)}^T t|^{-a} - 1) \mathbbm{1}_{\{ N_{\delta_0}(t) \leq 1 \}}] \notag \\
			\leq & \mathbb{E}[(|A_{i(t)}^T t|^{-a} - 1)_+] \notag \\
			\leq & \mathbb{E}[|A_{i(t)}^T t|^{-a} \mathbbm{1}_{\{ |A_{i(t)}^T t| \leq \delta_0 \}}] + \mathbb{E}[(|A_{i(t)}^T t|^{-a} - 1)_+ \mathbbm{1}_{\{ |A_{i(t)}^T t| > \delta_0 \}}] \notag \\
			\leq & \frac{\eta}{4} + (\delta_0^{-a} - 1) \leq \frac{\eta}{2}.
		\end{align}
		This gives ~\eqref{sup-t-S-d-1-eta-2}, which implies ~\eqref{2-sup-t-S-d-1-E-A-it-T-t--a} and then  $|\phi(t)| = O(|t|^{-a})$ as $|t| \to \infty$. 
		
		(2) We next prove the absolute continuity of $Z$. 
		For $n \geq 1$ and $\delta > 0$, consider  the event 
		\begin{equation}\label{F-n-number-u-in-T-n-G-u-T-t-ne-0-ge-d-a}
			F_{n,\delta} := \big\{ \omega \in \Omega:   \#\{ u \in T_n : |G_u^T t| \geq \delta|t|  \} \geq \big\lfloor \frac{d}{a} \big\rfloor + 1, \forall t \ne 0 \big\}.
		\end{equation}
		Let $\varepsilon > 0$ be arbitrary. By Lemma~\ref{lem::inf-N-t-can-be-large} and the compactness of the unit sphere  (as in the proof of Lemma~\ref{lem::N-delta-t-has-larger-than-one-expectation}), we know that there exist  $n$ and $\delta$ such that
		\begin{equation}
			\mathbb{P}[F_{n,\delta}] \geq 1-\varepsilon. 
		\end{equation}
		Let $(Z_u)_{u \in \mathbb{U}}$ be a family of i.i.d. random variables with the same law as $Z$, independent to the family of the vectors $(N_u,A_{u1},A_{u2}, \cdots)_{u \in \mathbb{U}}$. From the distributional equation~\eqref{equ::smoothing_transform}, we know that
		\begin{equation}
			Z \overset{\mathcal{L}}{=} \sum_{u \in T_n} G_u Z_u. 
		\end{equation}
		Thus, for any measurable  $E \subset \mathbb{R}^d$ with  Lebesgue measure $0$,
		\begin{align} 
			\label{eq-abs-Z} 
			\mathbb{P}[Z \in E] & = \mathbb{P}\big[ \sum_{u\in T_n} G_u Z_u \in E \big] \leq \varepsilon + \mathbb{P}\big[ F_{n,\delta} \cap \big\{ \sum_{u\in T_n} G_u Z_u \in E \big\} \big].
		\end{align}
		Let $\mu$  be the conditional distribution of   $Y:= \sum_{u \in T_u} G_uZ_u$
		given $F_{n,\delta}$, that is,
		$$\mu (B) = \mathbb P ( Y \in B | F_{n, \delta} )$$
		for each measurable set $B$ in $\mathbb R^d$. 
		Its 
		characteristic function of $\mu$
		satisfies, for all $t\in \mathbb R^d $,
		\begin{align*}
			\hat \mu (t) :=  \int e^{i\langle x,  t \rangle} d \mu (x) 
			=   \frac{1}{ \mathbb P [F_{n,\delta}]}  \mathbb E [  \mathbb E [ \mathbbm{1}_{ F_{n,\delta}}  e^{i 	\langle Y,  t \rangle  }  |  {\mathcal F}_n ] ]
			= \frac{1}{ \mathbb P [F_{n,\delta}]}    
			\mathbb E  \big[ \mathbbm{1}_{ F_{n,\delta}}   \prod_{u \in T_n} \phi (G_u^T t ) \big] .
		\end{align*}
		{ By Part (1) above, there exists $C > 0$ such that $|\phi(t)| \leq C (1 + |t|)^{-a}$ for all $t$. Then, 
			\begin{align*}
				|\hat{\mu} (t)|
				&\leq  \frac{1}{ \mathbb P [F_{n,\delta}]}    \mathbb E  \big[   \mathbbm{1}_{ F_{n,\delta}}  
				\prod_{u \in T_n,   |G_u^T t| \geq \delta |t| } 
				|\phi(G_u^T t)|
				\big]   
				\leq  
				C'(1+  | \delta  t |)^{ -a  (\lfloor \frac{d}{a} \rfloor + 1)   }   ,
			\end{align*} 
			where $C' = C^{(\lfloor \frac{d}{a} \rfloor + 1)}$ and we use the definition of $F_{n, \delta}$ in the last inequality.}
		Therefore $\hat \mu$  is integrable on $\mathbb R^d$, so that $\mu$ is absolutely continuous with respect to the Lebesgue measure on $\mathbb R^d$. This implies that 
		the second term in the right hand side of \eqref{eq-abs-Z}  is $0$. 
		%
		%
		%
		%
		
		Thus, from  \eqref{eq-abs-Z} we have $\mathbb{P}[Z \in E] \leq \varepsilon$. Since $\varepsilon > 0$ is arbitrary, we get $\mathbb{P}[Z \in E] = 0$, for any measurable set $E$ with Lebesgue measure $0$. This proves the absolute continuity of $Z$. 
	\end{proof}
	
	\section{Existence of harmonic moments}\label{section::negative-moments}

	To find the critical value for the existence of harmonic moments of the solution, we need the spectral gap theory for negative moments developed in~\cite{xiao2022edgeworth}. 
	
	We recall the definitions of $\tilde{I}_-$ and $\tilde{\kappa}$. If $\mathbb{P}[N = 1] > 0$, then we denote by $\tilde{A}_1$ the random matrix with the law of $A_1$ conditioned on $\{  N = 1 \}$. Define
	\begin{equation}
		\tilde{I}_- := \{ s \leq 0 : \mathbb{E}[\| \tilde{A}_1 \|^s] < +\infty \},
	\end{equation}
	and let $(\tilde{A}_i)_{i \geq 2}$ be a sequence of i.i.d. copies of $\tilde{A}_1$. From~\cite[Proposition 2.2]{xiao2022edgeworth}, under Condition~\ref{cond::Furstenberg-Kesten}, the following limit exists for $s \in \tilde{I}_-$: 
	\begin{equation}
		\tilde{\kappa}(s) := \lim_{n \to \infty} \mathbb{E}[\| \tilde{A}_n \cdots \tilde{A}_1 \|^s ]^{\frac{1}{n}}  \in (0, \infty).
	\end{equation}
	
	We introduce the transfer operator $P_s$ and the conjugate transfer operator $P^*_s$ for each $s \in\tilde{I}_-$ as follows: for any bounded measurable function $f$ on $\mathbb{S}_+^{d-1}$ and $v \in \mathbb{S}_+^{d-1}$, 
	\begin{equation}
		P_s f(v) := \mathbb{E}[|\tilde{A}_1 v|^s f(\tilde{A}_1 \cdot v)], \quad P_s^* f(v) := \mathbb{E}[|\tilde{A}_1^T v|^s f(\tilde{A}_1^T \cdot v)].
	\end{equation}
	Under Condition~\ref{cond::Furstenberg-Kesten}, the operator $P_s$ has a unique probability eigenmeasure $\nu_s$ and a unique (up to a scaling constant) strictly positive and continuous eigenfunction $r_s$, both corresponding to the eigenvalue $\tilde{\kappa}(s)$:
	\begin{equation} \label{def-rs-nus}
		P_s r_s = \tilde{\kappa}(s) r_s, \quad P_s \nu_s = \tilde{\kappa}(s)\nu_s,
	\end{equation}
	Similarly, the conjugate operator $P^*_s$ has a unique probability eigenmeasure $\nu^*_s$ and a unique (up to a scaling constant) strictly positive and continuous eigenfunction $r^*_s$, both corresponding to the eigenvalue $\tilde{\kappa}(s)$:
	\begin{equation} \label{eq-P*s}
		P^*_s r^*_s = \tilde{\kappa}(s) r^*_s, \quad P^*_s \nu^*_s = \tilde{\kappa}(s)\nu^*_s. 
	\end{equation}
	They satisfy
	\begin{align}
		r_s(v) = \int_{\mathbb{S}^{d-1}_+} \langle u, v\rangle^s \nu^*_s(du), \quad 
		r^*_s(v) = \int_{\mathbb{S}^{d-1}_+} \langle u, v\rangle^s \nu_s(du), \quad v \in \mathbb{S}^{d-1}_+ . \label{r-s-v-int-langle-u-v-lrangle-s-nu-s}
	\end{align}
	
	We now come to the proof of  Theorem~\ref{thm::critical-exponent-negative-moment}.  
	To this end we will  need the following technical  lemma.
	
	\begin{lemma}\label{lem::modulus-matrix}
		Suppose that $a$ is a deterministic positive $d \times d$ matrix that satisfies the Furstenberg-Kesten condition: there exists $c > 1$ such that
		\begin{equation}
			0< \max_{1\leq i, j \leq d} a(i, j)\leq c \min_{1\leq i, j \leq d} a(i, j),
		\end{equation}
		where $a(i, j)$ is the $(i,j)$-th entry of $a$. Then, for any $x, y \in \mathbb{S}_+^{d-1}$, we have
		\begin{equation}
			\frac{\| a \|}{cd} \leq \langle ax, y \rangle \leq \| a \|.
		\end{equation}
	\end{lemma}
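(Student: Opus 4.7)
The proof will be a short and direct computation. The key is to rewrite $\langle ax,y\rangle$ as a double sum and then sandwich it between the minimal and maximal entries of $a$, after which the Furstenberg–Kesten bound relates both extremes to $\|a\|$.

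First I will set $M := \max_{i,j} a(i,j)$ and $m := \min_{i,j} a(i,j)$, so that the hypothesis reads $M \leq c\, m$. Since $|x| = \sum_j x_j = 1$ and $|y| = 1$ (and both are componentwise nonnegative), the identity
\begin{equation*}
\langle ax, y\rangle \;=\; \sum_{i,j} y_i\, a(i,j)\, x_j
\end{equation*}
immediately gives the two-sided estimate $m \leq \langle ax, y\rangle \leq M$, because $\sum_{i,j} x_j y_i = 1$.

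Next I will connect $m$ and $M$ to $\|a\|$. Because $|\cdot|$ is the $\ell^1$ norm on $\mathbb{R}^d$, the induced operator norm is the maximum column sum: $\|a\| = \max_j \sum_i a(i,j)$. Each column sum lies in $[d\,m,\,d\,M]$, hence
\begin{equation*}
d\, m \;\leq\; \|a\| \;\leq\; d\, M.
\end{equation*}
In particular $M \leq \|a\|$ as well (any single entry is bounded by the sum of its column). Combining, the upper bound becomes $\langle ax,y\rangle \leq M \leq \|a\|$. For the lower bound, the Furstenberg–Kesten inequality $m \geq M/c$ together with $M \geq \|a\|/d$ yields
\begin{equation*}
m \;\geq\; \frac{M}{c} \;\geq\; \frac{\|a\|}{c\,d},
\end{equation*}
so $\langle ax, y\rangle \geq m \geq \|a\|/(cd)$, which is the desired estimate.

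There is no substantial obstacle here: the only point requiring minor care is the identification of $\|a\|$ as the maximum column sum, which follows from the convention $|x| = \sum_i |x_i|$ stated at the beginning of the paper. Everything else is a one-line computation.
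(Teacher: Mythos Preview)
Your proof is correct and follows essentially the same approach as the paper: both arguments identify $\|a\|$ as the maximum column sum, bound $\langle ax,y\rangle$ from below by $\min_{i,j} a(i,j)$, and then use the Furstenberg--Kesten inequality together with $\max_{i,j} a(i,j) \geq \|a\|/d$. The only cosmetic difference is that the paper obtains the upper bound via $\langle ax,y\rangle \leq |ax|\,|y| \leq \|a\|$ directly from the operator norm, whereas you pass through $\langle ax,y\rangle \leq M \leq \|a\|$; both are immediate.
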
	
	\begin{proof}
		On the one hand, $\langle ax, y\rangle \leq |ax| |y| \leq \|a\| |x| |y| = \|a\|$ for any $x, y \in \mathbb{S}^{d-1}_+$. On the other hand, for any $x, y \in \mathbb{S}^{d-1}_+$, 
		\begin{align}
			\langle ax, y\rangle \geq \min_{1 \leq i, j \leq d} a(i, j) \geq \frac{\max_{1 \leq i, j \leq d} a(i, j)}{c} \geq \frac{\|a\|}{cd},
		\end{align}
		where in the last inequality we use the fact $\|a\| = \max_{1 \leq j\leq d} \sum_{i = 1}^d |a(i, j)|$. 
	\end{proof}

	
	\begin{proof}[Proof of Theorem~\ref{thm::critical-exponent-negative-moment}]
		(1) We first prove that   if {$\mathbb{P}[N = 1] > 0$}, then (a) $\Rightarrow$ (b).  
		Assume that $\mathbb{E}[|Z|^{-a}] < +\infty$, and let $V^* \in \mathbb{S}^{d-1}_+$ be a random variable with law $\nu^*_{-a}$ independent to $Z$. By~\eqref{r-s-v-int-langle-u-v-lrangle-s-nu-s}, we have
		\begin{align}
			\mathbb{E}[\langle V^*, Z\rangle^{-a} | Z] = |Z|^{-a}   r_{-a}\big( \frac{Z}{|Z|}\big) \leq c |Z|^{-a},
		\end{align}
		where 
		$c =  \max_{x \in \mathbb S_+^{d-1} } r_{-a} (x) $.
		Therefore, we have $\mathbb{E}[\langle V^*, Z \rangle^{-a}] \leq c \mathbb{E}[| Z |^{-a}] < +\infty$. Suppose that $(N, A_1, A_2,\cdots)$ is independent of $(Z, V^*)$. Then, by~\eqref{equ::smoothing_transform},
		\begin{align}\label{E-theta-Z-a-ge-P-N-1-E-theta-A-1-Z-a}
			\mathbb{E}[\langle V^*, Z\rangle^{-a}  ] > \mathbb{E}[\mathbbm{1}_{\{ N = 1 \}} \langle V^*, A_1 Z \rangle^{-a}] = \mathbb{P}[N = 1] \mathbb{E}[\langle V^*, \tilde{A}_1 Z \rangle^{-a}].
		\end{align}		
		where the strict inequality follows from the fact that $\mathbb{P}[N \geq 2] > 0$. Using the equation $P^*_{-a}\nu_{-a}^* = \tilde{\kappa}(-a) \nu_{-a}^*$ (see  \eqref{eq-P*s}), we have
		\begin{equation}
			\mathbb{E}[\langle \tilde{A}_1^T V^*, Z \rangle^{-a} | Z] = \mathbb{E}[|\tilde{A}_1^T V^*|^{-a} \langle \tilde{A}_1^T \cdot V^*, Z \rangle^{-a} | Z] = \tilde{\kappa}(-a) \mathbb{E}[\langle V^*, Z\rangle^{-a} | Z].
		\end{equation}
		Taking expectation on both sides and plugging in to~\eqref{E-theta-Z-a-ge-P-N-1-E-theta-A-1-Z-a}, we get
		\begin{equation}
			\mathbb{E}[\langle V^*, Z \rangle^{-a}] >  \tilde{\kappa}(-a)\mathbb{P}[N = 1] \mathbb{E}[\langle V^*, Z \rangle^{-a}].
		\end{equation}
		Dividing both sides by $\mathbb{E}[\langle V^*, Z \rangle^{-a}]$, we derive that $ \tilde{\kappa}(-a)\mathbb{P}[N = 1] < 1$.
	
	\medskip 
		(2) we next prove that  if $\mathbb{P}[N = 1] > 0$, then 
		(b) $\Rightarrow$ (c). 
		Assume that $\mathbb{P}[N = 1]\tilde{\kappa}(-a) < 1$. Denote the Laplace transform of $Z$ by 
		\begin{equation}
			\psi(t) := \mathbb{E}[e^{-\langle t, Z \rangle}], \quad t \in \mathbb{R}^d_+.
		\end{equation}
		By~\eqref{equ::smoothing_transform}, the Laplace transform $\psi$ satisfies the functional equation
		\begin{equation}\label{psi-functional-equation}
			\psi(t) = \mathbb{E}\big[ \prod_{i=1}^N \psi(A_i^T t) \big], \quad t \in \mathbb{R}^d_+.
		\end{equation}
		To prove (c), it suffices to prove $\psi(t) = O(|t|^{-a})$ as $t \to \infty$ (see~\cite[Lemma 4.4]{liu1999asymptotic}).
		
		From~\eqref{equ::smoothing_transform} and Condition~\ref{cond::Furstenberg-Kesten}, we know that 
		$Z \in \mathrm{int}(\mathbb{R}_+^d)$ a.s. 
		Hence
		\begin{equation}
			\lim_{s \to \infty}  \sup_{|t| \geq s, t \in \mathbb{R}_+^d} \psi(t) = 0.
		\end{equation}
		For any $\varepsilon > 0$,  let $t_\varepsilon > 0$ be such that
		\begin{equation}
			\sup_{|t| \geq t_\varepsilon, t \in \mathbb{R}_+^d} \psi(t) \leq \varepsilon. 
		\end{equation}
		For any $\delta > 0$, define 
		\begin{equation}
			N_\delta := \# \{ 1\leq i \leq N: \iota(A_i^T) \geq \delta \},
		\end{equation}
		where $\iota(a) = \inf_{x \in \mathbb{S}_{+}^{d-1}} |ax|$. Then, we have a.s. $\lim_{\delta \to 0} N_\delta = N$. For $t \in \mathbb{R}^d_+$ with $|t| \geq t_\varepsilon / \delta$, we have
		\begin{equation}\label{psi-t-le-E-psi-A-1-T-t-epsilon-N-delta-1}
			\psi(t) \leq \mathbb{E}[\psi(A_1^T t) (\varepsilon^{N_\delta - 1} \mathbbm{1}_{\{ N_\delta \geq 1 \}} + \mathbbm{1}_{\{ N_\delta = 0 \}})] =: p_{\varepsilon, \delta}\mathbb{E}[\psi((A_1')^T t)],
		\end{equation}
		where $$p_{\varepsilon, \delta} = \mathbb{E}[\varepsilon^{N_\delta - 1} \mathbbm{1}_{\{ N_\delta \geq 1 \}} + \mathbbm{1}_{\{ N_\delta = 0 \}}],$$ and $A_1'$ is a random matrix whose law is determined by
		\begin{equation}\label{E-f-M-frac-1-p-epsilon-delta-E-f-A-1}
			\mathbb{E}[f(A_1')] = \frac{1}{p_{\varepsilon, \delta}} \mathbb{E}[f(A_1)(\varepsilon^{N_\delta - 1} \mathbbm{1}_{\{ N_\delta \geq 1 \}} + \mathbbm{1}_{\{ N_\delta = 0 \}})]
		\end{equation}
		for any bounded measurable function $f$. Let $V \in \mathbb{S}_+^{d-1}$ be a random variable with law $\nu_{-a}$,  and independent of $(N, A_1, A_2,\cdots)$ and $A_1'$. 
		
		We claim that there exist $\varepsilon, \delta, q \in (0,1)$ such that
		\begin{equation}\label{p-defined-p-epsilon-delta-le-1-p-E}
			p_{\varepsilon, \delta} < 1, \quad p_{\varepsilon, \delta}\mathbb{E}[\langle A_1' V, x\rangle^{-a} ] \leq q \mathbb{E}[\langle V, x\rangle^{-a}], \quad \forall x \in \mathbb{S}_+^{d-1}.
		\end{equation}
		By the dominated convergence theorem,
		\begin{equation}\label{lim-epsilon-0-delta-0-P-N=1-le-1}
			\lim_{\varepsilon \to 0+} \lim_{\delta \to 0+} p_{\varepsilon, \delta} = \mathbb{P}[N = 1] < 1.
		\end{equation}
		Note that by Lemma~\ref{lem::modulus-matrix}, for any $x \in \mathbb{S}_+^{d-1}$,
		$\langle A_1 V, x \rangle^{-a} \leq  (cd)^{a}  \| A_1\|^{-a}$, so that 
		\begin{align}
			p_{\varepsilon, \delta} \mathbb{E}[\langle A_1' V, x\rangle^{-a}] & = \mathbb{E}[\langle A_1 V, x \rangle^{-a} (\varepsilon^{N_\delta - 1} \mathbbm{1}_{\{ N_\delta \geq 1 \}} + \mathbbm{1}_{\{ N_\delta = 0 \}})] \notag \\
			& \leq (cd)^{a} \big(\varepsilon \mathbb{E}[\| A_1\|^{-a}] + \mathbb{E}[\| A_1\|^{-a} \mathbbm{1}_{\{ N_\delta = 0 \}}] + \mathbb{E}[\| A_1\|^{-a} |\mathbbm{1}_{\{ N_\delta = 1 \}} - \mathbbm{1}_{\{ N = 1 \}}|]  \big) \notag \\
			& \quad + \mathbb{E}[\langle A_1 V, x \rangle^{-a}\mathbbm{1}_{\{ N = 1 \}}]. \label{p-epsilon-delta-E-M-theta-x-a-le}
		\end{align}
		For any $\eta > 0$, since $\mathbb{E}[\| A_1\|^{-a}] < \infty$ and $\mathbb P [N=0]=0$, we can choose $\varepsilon>0$ and $\delta>0$ small enough, such that the first term on the right hand side of~\eqref{p-epsilon-delta-E-M-theta-x-a-le} is smaller than $\eta$. It follows that, by the definition of  the law $\nu_{-a}$  of $V$ (see \eqref{def-rs-nus}), for  $\varepsilon, \delta>0$ small enough and all $x \in \mathbb{S}_{+}^{d-1}$,
		\begin{align}
			p_{\varepsilon, \delta} \mathbb{E}[\langle A_1' V, x\rangle^{-a}] & \leq \eta + \mathbb{E}[\langle A_1 V, x \rangle^{-a}\mathbbm{1}_{\{ N = 1 \}}] \notag \\
			& = \eta + \mathbb{P}[N = 1] \tilde{\kappa}(-a) \mathbb{E}[\langle V, x\rangle^{-a}]\notag \\
			& \leq (\eta + \mathbb{P}[N = 1] \tilde{\kappa}(-a)) \mathbb{E}[\langle V, x\rangle^{-a}],\label{p-epsilon-delta-E-M-theta-x-a-le-eta}
		\end{align}
		where in the last inequality, we use the fact that $\langle V, x\rangle \leq 1$. Since $\mathbb{P}[N = 1]\tilde{\kappa}(-a) < 1$, we can choose $\eta \in (0,1)$ small enough such that 
		\begin{equation}
			q := \eta + \mathbb{P}[N = 1]\tilde{\kappa}(-a) < 1.
		\end{equation}
		This together with~\eqref{lim-epsilon-0-delta-0-P-N=1-le-1} implies that~\eqref{p-defined-p-epsilon-delta-le-1-p-E} holds for $\varepsilon, \delta \in (0,1)$ small enough.
		
		We  now fix $\varepsilon$ and $\delta$  in $  (0,1) $ such that~\eqref{p-defined-p-epsilon-delta-le-1-p-E} holds. Then, by~\eqref{psi-t-le-E-psi-A-1-T-t-epsilon-N-delta-1} and the fact that $\langle V, t \rangle^{-a} \geq |t|^{-a}$ for $t \in \mathbb{R}^d_+$, we know there exists $C > 0$ such that
		\begin{equation}
			\psi(t) \leq  p_{\varepsilon, \delta}  \mathbb{E}[\psi((A_1')^T t)] + C \mathbb{E}[\langle V, t\rangle^{-a}] , \quad \forall t \in \mathbb{R}^d_+.
		\end{equation}
		By iterating this inequality and using the condition that $ p_{\varepsilon, \delta}  < 1$, we get
		\begin{equation}
			\psi(t) \leq \sum_{n \geq 0} C  p_{\varepsilon, \delta}^n \mathbb{E}[\langle A_n'\cdots A_1' V, t\rangle^{-a}], 
		\end{equation} 
		where $(A_i')_{i \geq 2}$ are independent copies of  $A_1'$, which are also independent of $V$, with the convention that  $A_n'\cdots A_1' $ stands for the  identity matrix when $n=0$,.  Therefore, by iterating   \eqref{p-defined-p-epsilon-delta-le-1-p-E} and  using $r_{-a}^*(v) = \mathbb{E}[\langle V, v\rangle^{-a}]$ for $v\in \mathbb{S}_+^{d-1}$ (see \eqref{r-s-v-int-langle-u-v-lrangle-s-nu-s}), we have
		\begin{equation}
			\psi(t) \leq \sum_{n \geq 0} Cq^n \mathbb{E}[\langle V, t\rangle^{-a}] = \frac{Cr^*_{-a}(\frac{t}{|t|})|t|^{-a}}{1-q}, \quad t \in \mathbb{R}_+^d.
		\end{equation}
		Since $r_{-a}^*$ is a bounded function, we get that $\psi(t) = O(|t|^{-a})$ as $|t| \to \infty$. 
		
			
			(3) we then prove that  if  $\mathbb{P}[N = 1] = 0$, then (c) holds. 
			The proof is similar to Part (2) above. 
			In this case, corresponding 
			to~\eqref{p-epsilon-delta-E-M-theta-x-a-le-eta}, we have,  
			for any $\eta > 0$, there exists $\varepsilon$, $\delta$ and $p = p_{\varepsilon, \delta} < 1$ such that
			\begin{equation}
				p\mathbb{E}[\langle A_1' V, x\rangle^{-a}] \leq \eta \mathbb{E}[\langle V, x\rangle^{-a}], \quad x \in \mathbb{S}_+^{d-1},
			\end{equation}
			where $A_1'$ is defined as in~\eqref{E-f-M-frac-1-p-epsilon-delta-E-f-A-1}. The rest of the proof is the same as above. 
			
			(4) we finally prove that  if $\mathbb{P}[N = 1] > 0$ and  $\mathbb{E}[\|A_1\|^{-a-\varepsilon}] < +\infty$ for some $\varepsilon > 0$, then 
			(b) $\Rightarrow$ (a).  Since $-a-\varepsilon \in \tilde{I}_-$ and $\tilde{\kappa}$ is continuous on $\tilde{I}_-$, there exists $a' \in (a, a+\varepsilon)$ such that
			\begin{equation}
				\tilde{\kappa}(-a')\mathbb{P}[N = 1] < 1. 
			\end{equation}
			So from the implication   (b) $\Rightarrow$ (c) proved in Part (2), we know that $\mathbb{E}[|Z|^{-b}] < \infty$ for all $b \in (0, a')$. In particular, $\mathbb{E}[|Z|^{-a}] < +\infty$. 
		\end{proof}
		
		%
		%

		\section{Appendix}\label{sec::appendix}
		
		\begin{proof}[Proof of Lemma~\ref{lem::support_breaking_general}]
			We only prove the case where $n = 2$, since  the proof for the general case is similar. 
			For convenience denote $\Omega_0 = \Omega$. For $i = 0,1,2$, we denote by $d_i$ the metric on $\Omega_i$. For $x \in \Omega_i$ and $\varepsilon > 0$, define the open balls
			\begin{equation}
				B_i(x,\varepsilon) = \left\{ v \in \Omega_i : d_i(v,x) < \varepsilon \right\}, \quad i =0, 1, 2.
			\end{equation}
			
			(1)  We first  prove \eqref{equ::support_breaking_general_1} for $n=2$.
			Let $z \in \mathrm{supp}(f(X_1, X_2))$ be arbitrary. Since $f$ is continuous, we know that for each $\varepsilon > 0$, the inverse image $f^{-1}(B_0(z,\varepsilon))$ is also open. Since the Polish space is second-countable,  there exists a sequence of pairs of open sets $\{ U_i^{1,\varepsilon} \times U_i^{2,\varepsilon} \}_{ i\in\mathbb{N}}$ in $\Omega_1 \times \Omega_2$ such that 
			\begin{equation}
				f^{-1}(B_0(z,\varepsilon)) = \bigcup_{i \in \mathbb{N}} U_i^{1,\varepsilon} \times U_i^{2,\varepsilon}.
			\end{equation}
			Since $z \in \mathrm{supp}(f(X_1, X_2))$, 
			\begin{equation}
				0 < \mathbb{P}[f(X_1, X_2)\in B(z,\varepsilon)] 
				= \mathbb{P}\left[ (X_1, X_2) \in \bigcup_{i \in \mathbb{N}}  U_i^{1,\varepsilon} \times U_i^{2,\varepsilon}  \right].
			\end{equation}
			Therefore there exists some $i_0 \in \mathbb N$ such that, with 
			$(V_1, V_2) := (U_{i_0}^{1,\varepsilon},U_{i_0}^{2,\varepsilon})$, 
			\begin{equation}
				\mathbb{P}[(X_1, X_2) \in V_1 \times  V_2] > 0.
			\end{equation}
			In particular,  we have
			\begin{equation}
				\mathbb{P}[X_1 \in V_1] > 0.
			\end{equation}
			We claim that $\mathrm{supp}(X_1) \cap V_1 \ne \emptyset$. Otherwise,  for all $x \in V_1$, there exists a neighborhood $N_x$ of $x$ such that $N_x \subset V_1$ and $\mathbb{P}[X_1 \in N_x] = 0$. Since $\Omega_1$ is second-countable, 
			the open set $ V_1 =  \bigcup_{x \in V_1} N_{x} $ can be written as a countable union 
			\begin{equation}\label{equ::ad_hoc_5}
				V_1 = \bigcup_{i \in \mathbb{N}} N_{x_i},  \quad x_i \in V_1. 
			\end{equation}
			Since   $\mathbb{P}[X_1 \in  N_{x_i} ] = 0$ for all  $ i \in \mathbb{N}$, 
			we get that $\mathbb{P}[X_1 \in V_1] = 0$. This contradiction proves our claim. 
			
			Similarly, we have 
			$\mathrm{supp}(X_2) \cap V_2 \ne \emptyset$. 
			Therefore we can take 
			$x_\varepsilon \in \mathrm{supp}(X_1) \cap V_1$ and $y_\varepsilon \in \mathrm{supp}(X_2) \cap V_2$. Since $f^{-1}(B_0(z,\varepsilon)) \supset V_1 \times V_2$, we have $f(x_\varepsilon,y_\varepsilon) \in B_0(z,\varepsilon)$.  Since  $\varepsilon >0$ is arbitrary and  $f(x_\varepsilon,y_\varepsilon) \in  
			f(\mathrm{supp}(X_1), \mathrm{supp}(X_2))$, it follows that  
			\begin{equation}
				z \in \overline{f(\mathrm{supp}(X_1), \mathrm{supp}(X_2))}.
			\end{equation}
			This proves \eqref{equ::support_breaking_general_1}. 
			
			Now suppose that \eqref{equ::ad_hoc_6} holds. Then 
			\begin{equation}
				O \cap \mathrm{supp}(f(X_1, X_2)) \ne \emptyset.
			\end{equation}
			By \eqref{equ::support_breaking_general_1}, this implies that 
			\begin{equation}
				O \cap \overline{f(\mathrm{supp}(X_1), \mathrm{supp}(X_2))} \ne \emptyset. 
			\end{equation}
			Since $O$ is open, it follows that 
			\begin{equation}
				O \cap {f(\mathrm{supp}(X_1), \mathrm{supp}(X_2))} \ne \emptyset. 
			\end{equation}
			This means that there exists $(x_1, x_2) \in \prod_{i = 1}^2 \mathrm{supp}(X_i)$ such that $f(x_1, x_2) \in O$.
			
			(2) We next prove \eqref{equ::support_breaking_general_2} for $n=2$. 
			For $z \in f(\mathrm{supp}(X_1), \mathrm{supp}(X_2))$, there exist $x \in \mathrm{supp}(X_1)$ and $y \in \mathrm{supp}(X_2)$ such that $z = f(x,y)$. Since $f$ is continuous, for all $\varepsilon > 0$, there exist $\varepsilon_1 > 0$ and $\varepsilon_2 > 0$ such that 
			\begin{equation}
				B_0(z, \varepsilon) \supset f(B_1(x, \varepsilon_1), B_2(y, \varepsilon_2)).
			\end{equation} 
			Then, 
			\begin{align}
				\mathbb{P}[f(X_1,X_2) \in B_0(z,\varepsilon)] & \geq \mathbb{P}[X_1 \in B_1(x,\varepsilon_1), X_2 \in B_2(y,\varepsilon_2)] \notag \\
				& = \mathbb{P}[X_1 \in B_1(x,\varepsilon_1)] \mathbb{P}[X_2\in B_2(y,\varepsilon_2)] \notag \\
				& > 0.
			\end{align}
			Since $\varepsilon > 0$ is arbitrary, we know that $z \in \mathrm{supp}(f(X_1,X_2))$. Thus, we have
			\begin{equation}\label{equ::ad_hoc_4}
				\mathrm{supp}(f(X_1, X_2)) \supset f(\mathrm{supp}(X_1), \mathrm{supp}(X_2)).
			\end{equation}
			By taking closure on both sides of~\eqref{equ::ad_hoc_4},  and combining with the conclusion proved in Part (1) above, 
			we get   \eqref{equ::support_breaking_general_2}. 
			
			(3)  We finally prove  \eqref{supp-Z1Z2}. { Without loss of generality, assume that $X_i$ is $\Omega$-valued for each $i \geq 1$.}
			We equip 
			the product space $ \Omega^{ \mathbb N^*} $ with the metric 
			\begin{align}
				\tilde{d}(z, w) := \sum_{i\geq 1} \frac{d(z_i, w_i)}{2^i(1 + d(z_i,w_i))}, \quad \mbox{ for }  z = (z_i)_{i\geq1}, w = (w_i)_{i\geq1} \in  \Omega^{ \mathbb N^*},
			\end{align}
			where $d$ is the metric on $\Omega$. 
			
			We claim that any product of closed sets in $\Omega$ are also closed in $ \Omega^{ \mathbb N^*} $. Indeed, Suppose that $(O_i)_{i\geq1}$ is a sequence of open sets in $\Omega$ and let $C_i = \Omega \setminus O_i$. Then, we have
			\begin{align}
				\prod_{i\geq1} C_i 
				=   \bigcap_{i\geq1}  \bigg\{ x \in \Omega^{\mathbb N^*} : x_i \in C_i \bigg\}  
				= \left( \bigcup_{i\geq1} \left( \prod_{j < i} \Omega_j \times O_i \times \prod_{j > i} \Omega_j \right) \right)^c.
			\end{align}
			Since the countable union of open sets is open, this proves the claim. 
			
			Using this claim and the metric $\tilde{d}$, we can argue as in the proof of Parts (1) and (2) to derive that
			\begin{align}
				\mathrm{supp}((X_i)_{i\geq1}) = \overline{\prod_{i \geq 1}\mathrm{supp}(X_i)} = \prod_{i \geq 1}\mathrm{supp}(X_i). 
			\end{align}
		\end{proof}
		




\bigskip 
\noindent
{\textbf {Acknowledgments.}}
The work has been supported 
by  
the ANR project ‘‘Rawabranch’’ number ANR-23-CE40-0008,  the France 2030 framework program, Centre Henri Lebesgue ANR-11-LABX-0020-01, and the 
Tsinghua Scholarship for Overseas Graduate Studies. 


  \bibliographystyle{elsarticle-num-names-alphsort} 
  \bibliography{ref}

\begin{thebibliography}{41}
\providecommand{\natexlab}[1]{#1}
\providecommand{\url}[1]{\texttt{#1}}
\providecommand{\urlprefix}{URL }
\expandafter\ifx\csname urlstyle\endcsname\relax
  \providecommand{\doi}[1]{doi:\discretionary{}{}{}#1}\else
  \providecommand{\doi}[1]{doi:\discretionary{}{}{}\begingroup
  \urlstyle{rm}\url{#1}\endgroup}\fi
\providecommand{\bibinfo}[2]{#2}

\bibitem[{Alsmeyer and Meiners(2013)}]{alsmeyer2013fixed}
\bibinfo{author}{G.~Alsmeyer}, \bibinfo{author}{M.~Meiners},
  \bibinfo{title}{Fixed points of the smoothing transform: two-sided
  solutions}, \bibinfo{journal}{Probab. Theory Related Fields}
  \bibinfo{volume}{155}~(\bibinfo{number}{1-2}) (\bibinfo{year}{2013})
  \bibinfo{pages}{165--199},
  \urlprefix\url{https://doi.org/10.1007/s00440-011-0395-y}.

\bibitem[{Bassetti and Ladelli(2012)}]{bassetti2012self}
\bibinfo{author}{F.~Bassetti}, \bibinfo{author}{L.~Ladelli},
  \bibinfo{title}{Self-similar solutions in one-dimensional kinetic models: a
  probabilistic view}, \bibinfo{journal}{Ann. Appl. Probab.}
  \bibinfo{volume}{22}~(\bibinfo{number}{5}) (\bibinfo{year}{2012})
  \bibinfo{pages}{1928--1961},
  \urlprefix\url{https://doi.org/10.1214/11-AAP818}.

\bibitem[{Bassetti and Matthes(2014)}]{bassetti2014multi}
\bibinfo{author}{F.~Bassetti}, \bibinfo{author}{D.~Matthes},
  \bibinfo{title}{Multi-dimensional smoothing transformations: existence,
  regularity and stability of fixed points}, \bibinfo{journal}{Stochastic
  Process. Appl.} \bibinfo{volume}{124}~(\bibinfo{number}{1})
  (\bibinfo{year}{2014}) \bibinfo{pages}{154--198},
  \urlprefix\url{https://doi.org/10.1016/j.spa.2013.07.006}.

\bibitem[{Biggins(1977)}]{biggins1977martingale}
\bibinfo{author}{J.~D. Biggins}, \bibinfo{title}{Martingale convergence in the
  branching random walk}, \bibinfo{journal}{J. Appl. Probability}
  \bibinfo{volume}{14}~(\bibinfo{number}{1}) (\bibinfo{year}{1977})
  \bibinfo{pages}{25--37}, \urlprefix\url{https://doi.org/10.2307/3213258}.

\bibitem[{Biggins and Kyprianou(1997)}]{biggins1997seneta}
\bibinfo{author}{J.~D. Biggins}, \bibinfo{author}{A.~E. Kyprianou},
  \bibinfo{title}{Seneta-{H}eyde norming in the branching random walk},
  \bibinfo{journal}{Ann. Probab.} \bibinfo{volume}{25}~(\bibinfo{number}{1})
  (\bibinfo{year}{1997}) \bibinfo{pages}{337--360},
  \urlprefix\url{https://doi.org/10.1214/aop/1024404291}.

\bibitem[{Biggins and Kyprianou(2005)}]{biggins2005fixed}
\bibinfo{author}{J.~D. Biggins}, \bibinfo{author}{A.~E. Kyprianou},
  \bibinfo{title}{Fixed points of the smoothing transform: the boundary case},
  \bibinfo{journal}{Electron. J. Probab.} \bibinfo{volume}{10}
  (\bibinfo{year}{2005}) \bibinfo{pages}{no. 17, 609--631},
  \urlprefix\url{https://doi.org/10.1214/EJP.v10-255}.

\bibitem[{Buraczewski et~al.(2014)Buraczewski, Damek, Guivarc'h, and
  Mentemeier}]{BurDaGuiMent14}
\bibinfo{author}{D.~Buraczewski}, \bibinfo{author}{E.~Damek},
  \bibinfo{author}{Y.~Guivarc'h}, \bibinfo{author}{S.~Mentemeier},
  \bibinfo{title}{On multidimensional {M}andelbrot cascades},
  \bibinfo{journal}{J. Difference Equ. Appl.}
  \bibinfo{volume}{20}~(\bibinfo{number}{11}) (\bibinfo{year}{2014})
  \bibinfo{pages}{1523--1567},
  \urlprefix\url{https://doi.org/10.1080/10236198.2014.950259}.

\bibitem[{Buraczewski et~al.(2013)Buraczewski, Damek, Mentemeier, and
  Mirek}]{buraczewski2013heavy}
\bibinfo{author}{D.~Buraczewski}, \bibinfo{author}{E.~Damek},
  \bibinfo{author}{S.~Mentemeier}, \bibinfo{author}{M.~Mirek},
  \bibinfo{title}{Heavy tailed solutions of multivariate smoothing transforms},
  \bibinfo{journal}{Stochastic Process. Appl.}
  \bibinfo{volume}{123}~(\bibinfo{number}{6}) (\bibinfo{year}{2013})
  \bibinfo{pages}{1947--1986},
  \urlprefix\url{https://doi.org/10.1016/j.spa.2013.02.003}.

\bibitem[{Buraczewski et~al.(2021)Buraczewski, Iksanov, and
  Mallein}]{buraczewski2021derivative}
\bibinfo{author}{D.~Buraczewski}, \bibinfo{author}{A.~Iksanov},
  \bibinfo{author}{B.~Mallein}, \bibinfo{title}{On the derivative martingale in
  a branching random walk}, \bibinfo{journal}{Ann. Probab.}
  \bibinfo{volume}{49}~(\bibinfo{number}{3}) (\bibinfo{year}{2021})
  \bibinfo{pages}{1164--1204},
  \urlprefix\url{https://doi.org/10.1214/20-aop1474}.

\bibitem[{Buraczewski and Mentemeier(2016)}]{buraczewski2016precise}
\bibinfo{author}{D.~Buraczewski}, \bibinfo{author}{S.~Mentemeier},
  \bibinfo{title}{Precise large deviation results for products of random
  matrices}, \bibinfo{journal}{Ann. Inst. Henri Poincar\'e{} Probab. Stat.}
  \bibinfo{volume}{52}~(\bibinfo{number}{3}) (\bibinfo{year}{2016})
  \bibinfo{pages}{1474--1513},
  \urlprefix\url{https://doi.org/10.1214/15-AIHP684}.

\bibitem[{Chauvin et~al.(2014)Chauvin, Liu, and Pouyanne}]{chauvin2014limit}
\bibinfo{author}{B.~Chauvin}, \bibinfo{author}{Q.~Liu},
  \bibinfo{author}{N.~Pouyanne}, \bibinfo{title}{Limit distributions for
  multitype branching processes of {$m$}-ary search trees},
  \bibinfo{journal}{Ann. Inst. Henri Poincar\'e{} Probab. Stat.}
  \bibinfo{volume}{50}~(\bibinfo{number}{2}) (\bibinfo{year}{2014})
  \bibinfo{pages}{628--654},
  \urlprefix\url{https://doi.org/10.1214/12-AIHP518}.

\bibitem[{Chen et~al.(2023)Chen, Garban, and
  Shekhar}]{ChenGarbanShekharfixedbroanian}
\bibinfo{author}{X.~Chen}, \bibinfo{author}{C.~Garban},
  \bibinfo{author}{A.~Shekhar}, \bibinfo{title}{The fixed points of branching
  {B}rownian motion}, \bibinfo{journal}{Probab. Theory Related Fields}
  \bibinfo{volume}{185}~(\bibinfo{number}{3-4}) (\bibinfo{year}{2023})
  \bibinfo{pages}{839--884},
  \urlprefix\url{https://doi.org/10.1007/s00440-022-01183-4}.

\bibitem[{Chen and He(2020)}]{ChenHelowerdeviation}
\bibinfo{author}{X.~Chen}, \bibinfo{author}{H.~He}, \bibinfo{title}{Lower
  deviation and moderate deviation probabilities for maximum of a branching
  random walk}, \bibinfo{journal}{Ann. Inst. Henri Poincar\'e{} Probab. Stat.}
  \bibinfo{volume}{56}~(\bibinfo{number}{4}) (\bibinfo{year}{2020})
  \bibinfo{pages}{2507--2539},
  \urlprefix\url{https://doi.org/10.1214/20-AIHP1048}.

\bibitem[{Damek et~al.(2019)Damek, Gantert, and Kolesko}]{damek2019absolute}
\bibinfo{author}{E.~Damek}, \bibinfo{author}{N.~Gantert},
  \bibinfo{author}{K.~Kolesko}, \bibinfo{title}{Absolute continuity of the
  martingale limit in branching processes in random environment},
  \bibinfo{journal}{Electron. Commun. Probab.} \bibinfo{volume}{24}
  (\bibinfo{year}{2019}) \bibinfo{pages}{Paper No. 42, 13},
  \urlprefix\url{https://doi.org/10.1214/19-ECP229}.

\bibitem[{Damek and Mentemeier(2018)}]{damek2018absolute}
\bibinfo{author}{E.~Damek}, \bibinfo{author}{S.~Mentemeier},
  \bibinfo{title}{Absolute continuity of complex martingales and of solutions
  to complex smoothing equations}, \bibinfo{journal}{Electron. Commun. Probab.}
  \bibinfo{volume}{23} (\bibinfo{year}{2018}) \bibinfo{pages}{Paper No. 60,
  12}, \urlprefix\url{https://doi.org/10.1214/18-ECP155}.

\bibitem[{Durrett and Liggett(1983)}]{durrett1983fixed}
\bibinfo{author}{R.~Durrett}, \bibinfo{author}{T.~M. Liggett},
  \bibinfo{title}{Fixed points of the smoothing transformation},
  \bibinfo{journal}{Z. Wahrsch. Verw. Gebiete}
  \bibinfo{volume}{64}~(\bibinfo{number}{3}) (\bibinfo{year}{1983})
  \bibinfo{pages}{275--301},
  \urlprefix\url{https://doi.org/10.1007/BF00532962}.

\bibitem[{Furstenberg and Kesten(1960)}]{furstenberg1960products}
\bibinfo{author}{H.~Furstenberg}, \bibinfo{author}{H.~Kesten},
  \bibinfo{title}{Products of random matrices}, \bibinfo{journal}{Ann. Math.
  Statist.} \bibinfo{volume}{31} (\bibinfo{year}{1960})
  \bibinfo{pages}{457--469},
  \urlprefix\url{https://doi.org/10.1214/aoms/1177705909}.

\bibitem[{Grama et~al.(2017)Grama, Liu, and Miqueu}]{grama2017berry}
\bibinfo{author}{I.~Grama}, \bibinfo{author}{Q.~Liu},
  \bibinfo{author}{E.~Miqueu}, \bibinfo{title}{Berry-{E}sseen's bound and
  {C}ram\'er's large deviation expansion for a supercritical branching process
  in a random environment}, \bibinfo{journal}{Stochastic Process. Appl.}
  \bibinfo{volume}{127}~(\bibinfo{number}{4}) (\bibinfo{year}{2017})
  \bibinfo{pages}{1255--1281},
  \urlprefix\url{https://doi.org/10.1016/j.spa.2016.07.014}.

\bibitem[{Grama et~al.(2023)Grama, Liu, and Miqueu}]{grama2023asymptotic}
\bibinfo{author}{I.~Grama}, \bibinfo{author}{Q.~Liu},
  \bibinfo{author}{E.~Miqueu}, \bibinfo{title}{Asymptotics of the distribution
  and harmonic moments for a supercritical branching process in a random
  environment}, \bibinfo{journal}{Ann. Inst. Henri Poincar\'e{} Probab. Stat.}
  \bibinfo{volume}{59}~(\bibinfo{number}{4}) (\bibinfo{year}{2023})
  \bibinfo{pages}{1934--1950},
  \urlprefix\url{https://doi.org/10.1214/22-aihp1318}.

\bibitem[{Guivarc'h(1990)}]{guivarc1990extension}
\bibinfo{author}{Y.~Guivarc'h}, \bibinfo{title}{Sur une extension de la notion
  de loi semi-stable}, \bibinfo{journal}{Ann. Inst. H. Poincar\'e{} Probab.
  Statist.} \bibinfo{volume}{26}~(\bibinfo{number}{2}) (\bibinfo{year}{1990})
  \bibinfo{pages}{261--285},
  \urlprefix\url{http://www.numdam.org/item?id=AIHPB_1990__26_2_261_0}.

\bibitem[{Guivarc'h and Le~Page(2016)}]{GuiLe16}
\bibinfo{author}{Y.~Guivarc'h}, \bibinfo{author}{E.~Le~Page},
  \bibinfo{title}{Spectral gap properties for linear random walks and
  {P}areto's asymptotics for affine stochastic recursions},
  \bibinfo{journal}{Ann. Inst. Henri Poincar\'e{} Probab. Stat.}
  \bibinfo{volume}{52}~(\bibinfo{number}{2}) (\bibinfo{year}{2016})
  \bibinfo{pages}{503--574},
  \urlprefix\url{https://doi.org/10.1214/15-AIHP668}.

\bibitem[{Hennion(1997)}]{HennionLimit97}
\bibinfo{author}{H.~Hennion}, \bibinfo{title}{Limit theorems for products of
  positive random matrices}, \bibinfo{journal}{Ann. Probab.}
  \bibinfo{volume}{25}~(\bibinfo{number}{4}) (\bibinfo{year}{1997})
  \bibinfo{pages}{1545--1587},
  \urlprefix\url{https://doi.org/10.1214/aop/1023481103}.

\bibitem[{Hu and Shi(2009)}]{hu2009minimal}
\bibinfo{author}{Y.~Hu}, \bibinfo{author}{Z.~Shi}, \bibinfo{title}{Minimal
  position and critical martingale convergence in branching random walks, and
  directed polymers on disordered trees}, \bibinfo{journal}{Ann. Probab.}
  \bibinfo{volume}{37}~(\bibinfo{number}{2}) (\bibinfo{year}{2009})
  \bibinfo{pages}{742--789}, \urlprefix\url{https://doi.org/10.1214/08-AOP419}.

\bibitem[{Huang(2016)}]{huang2016moments}
\bibinfo{author}{C.~Huang}, \bibinfo{title}{Moments for multidimensional
  {M}andelbrot cascades}, \bibinfo{journal}{J. Appl. Probab.}
  \bibinfo{volume}{53}~(\bibinfo{number}{1}) (\bibinfo{year}{2016})
  \bibinfo{pages}{1--21}, \urlprefix\url{https://doi.org/10.1017/jpr.2015.4}.

\bibitem[{Jelenkovi\'c and Olvera-Cravioto(2010)}]{jelenkovic2010information}
\bibinfo{author}{P.~R. Jelenkovi\'c}, \bibinfo{author}{M.~Olvera-Cravioto},
  \bibinfo{title}{Information ranking and power laws on trees},
  \bibinfo{journal}{Adv. in Appl. Probab.}
  \bibinfo{volume}{42}~(\bibinfo{number}{4}) (\bibinfo{year}{2010})
  \bibinfo{pages}{1057--1093},
  \urlprefix\url{https://doi.org/10.1239/aap/1293113151}.

\bibitem[{Kahane and Peyrière(1976)}]{kahane1976certaines}
\bibinfo{author}{J.-P. Kahane}, \bibinfo{author}{J.~Peyrière},
  \bibinfo{title}{Sur certaines martingales de {B}enoit {M}andelbrot},
  \bibinfo{journal}{Advances in Math.}
  \bibinfo{volume}{22}~(\bibinfo{number}{2}) (\bibinfo{year}{1976})
  \bibinfo{pages}{131--145},
  \urlprefix\url{https://doi.org/10.1016/0001-8708(76)90151-1}.

\bibitem[{Leckey(2019)}]{leckey2019densities}
\bibinfo{author}{K.~Leckey}, \bibinfo{title}{On densities for solutions to
  stochastic fixed point equations}, \bibinfo{journal}{Random Structures
  Algorithms} \bibinfo{volume}{54}~(\bibinfo{number}{3}) (\bibinfo{year}{2019})
  \bibinfo{pages}{528--558}, \urlprefix\url{https://doi.org/10.1002/rsa.20799}.

\bibitem[{Liu(1998)}]{liu1998fixed}
\bibinfo{author}{Q.~Liu}, \bibinfo{title}{Fixed points of a generalized
  smoothing transformation and applications to the branching random walk},
  \bibinfo{journal}{Adv. in Appl. Probab.}
  \bibinfo{volume}{30}~(\bibinfo{number}{1}) (\bibinfo{year}{1998})
  \bibinfo{pages}{85--112},
  \urlprefix\url{https://doi.org/10.1239/aap/1035227993}.

\bibitem[{Liu(1999)}]{liu1999asymptotic}
\bibinfo{author}{Q.~Liu}, \bibinfo{title}{Asymptotic properties of
  supercritical age-dependent branching processes and homogeneous branching
  random walks}, \bibinfo{journal}{Stochastic Process. Appl.}
  \bibinfo{volume}{82}~(\bibinfo{number}{1}) (\bibinfo{year}{1999})
  \bibinfo{pages}{61--87},
  \urlprefix\url{https://doi.org/10.1016/S0304-4149(99)00008-3}.

\bibitem[{Liu(2000)}]{liu2000generalized}
\bibinfo{author}{Q.~Liu}, \bibinfo{title}{On generalized multiplicative
  cascades}, \bibinfo{journal}{Stochastic Process. Appl.}
  \bibinfo{volume}{86}~(\bibinfo{number}{2}) (\bibinfo{year}{2000})
  \bibinfo{pages}{263--286},
  \urlprefix\url{https://doi.org/10.1016/S0304-4149(99)00097-6}.

\bibitem[{Liu(2001)}]{liu2001asymptotic}
\bibinfo{author}{Q.~Liu}, \bibinfo{title}{Asymptotic properties and absolute
  continuity of laws stable by random weighted mean},
  \bibinfo{journal}{Stochastic Process. Appl.}
  \bibinfo{volume}{95}~(\bibinfo{number}{1}) (\bibinfo{year}{2001})
  \bibinfo{pages}{83--107},
  \urlprefix\url{https://doi.org/10.1016/S0304-4149(01)00092-8}.

\bibitem[{Maillard and Penington(2024)}]{MaillardPeningtonBrw}
\bibinfo{author}{P.~Maillard}, \bibinfo{author}{S.~Penington},
  \bibinfo{title}{Branching random walk with non-local competition},
  \bibinfo{journal}{J. Lond. Math. Soc. (2)}
  \bibinfo{volume}{109}~(\bibinfo{number}{6}) (\bibinfo{year}{2024})
  \bibinfo{pages}{Paper No. e12919, 78},
  \urlprefix\url{https://doi.org/10.1112/jlms.12919}.

\bibitem[{Maillard and Schweinsberg(2022)}]{MillardSchweinsbergYaglomtype}
\bibinfo{author}{P.~Maillard}, \bibinfo{author}{J.~Schweinsberg},
  \bibinfo{title}{Yaglom-type limit theorems for branching {B}rownian motion
  with absorption}, \bibinfo{journal}{Ann. H. Lebesgue} \bibinfo{volume}{5}
  (\bibinfo{year}{2022}) \bibinfo{pages}{921--985},
  \urlprefix\url{https://doi.org/10.5802/ahl.140}.

\bibitem[{Mandelbrot(1974)}]{mandelbrot1974multiplications}
\bibinfo{author}{B.~Mandelbrot}, \bibinfo{title}{Multiplications al\'eatoires
  it\'er\'ees et distributions invariantes par moyenne pond\'er\'ee
  al\'eatoire}, \bibinfo{journal}{C. R. Acad. Sci. Paris S\'er. A}
  \bibinfo{volume}{278} (\bibinfo{year}{1974}) \bibinfo{pages}{289--292}.

\bibitem[{Meiners and Mentemeier(2017)}]{meiners2017solutions}
\bibinfo{author}{M.~Meiners}, \bibinfo{author}{S.~Mentemeier},
  \bibinfo{title}{Solutions to complex smoothing equations},
  \bibinfo{journal}{Probab. Theory Related Fields}
  \bibinfo{volume}{168}~(\bibinfo{number}{1-2}) (\bibinfo{year}{2017})
  \bibinfo{pages}{199--268},
  \urlprefix\url{https://doi.org/10.1007/s00440-016-0709-1}.

\bibitem[{Mentemeier(2016)}]{Ment16}
\bibinfo{author}{S.~Mentemeier}, \bibinfo{title}{The fixed points of the
  multivariate smoothing transform}, \bibinfo{journal}{Probab. Theory Related
  Fields} \bibinfo{volume}{164}~(\bibinfo{number}{1-2}) (\bibinfo{year}{2016})
  \bibinfo{pages}{401--458},
  \urlprefix\url{https://doi.org/10.1007/s00440-015-0615-y}.

\bibitem[{Miller et~al.(2022)Miller, Sheffield, and Werner}]{miller2022simple}
\bibinfo{author}{J.~Miller}, \bibinfo{author}{S.~Sheffield},
  \bibinfo{author}{W.~Werner}, \bibinfo{title}{Simple conformal loop ensembles
  on {L}iouville quantum gravity}, \bibinfo{journal}{Ann. Probab.}
  \bibinfo{volume}{50}~(\bibinfo{number}{3}) (\bibinfo{year}{2022})
  \bibinfo{pages}{905--949},
  \urlprefix\url{https://doi.org/10.1214/21-aop1550}.

\bibitem[{Neininger and R\"uschendorf(2006)}]{neininger2006survey}
\bibinfo{author}{R.~Neininger}, \bibinfo{author}{L.~R\"uschendorf},
  \bibinfo{title}{A survey of multivariate aspects of the contraction method},
  \bibinfo{journal}{Discrete Math. Theor. Comput. Sci.}
  \bibinfo{volume}{8}~(\bibinfo{number}{1}) (\bibinfo{year}{2006})
  \bibinfo{pages}{31--56}.

\bibitem[{Rockafellar(1970)}]{rockafellar2015convex}
\bibinfo{author}{R.~T. Rockafellar}, \bibinfo{title}{Convex analysis}, vol.
  \bibinfo{volume}{No. 28} of \emph{\bibinfo{series}{Princeton Mathematical
  Series}}, \bibinfo{publisher}{Princeton University Press, Princeton, NJ},
  \bibinfo{year}{1970}.

\bibitem[{Xiao et~al.(2022{\natexlab{a}})Xiao, Grama, and
  Liu}]{XiaoGramaLiu_BerryEssen22}
\bibinfo{author}{H.~Xiao}, \bibinfo{author}{I.~Grama},
  \bibinfo{author}{Q.~Liu}, \bibinfo{title}{Berry-{E}sseen bound and precise
  moderate deviations for products of random matrices}, \bibinfo{journal}{J.
  Eur. Math. Soc. (JEMS)} \bibinfo{volume}{24}~(\bibinfo{number}{8})
  (\bibinfo{year}{2022}{\natexlab{a}}) \bibinfo{pages}{2691--2750},
  \urlprefix\url{https://doi.org/10.4171/jems/1142}.

\bibitem[{Xiao et~al.(2022{\natexlab{b}})Xiao, Grama, and
  Liu}]{xiao2022edgeworth}
\bibinfo{author}{H.~Xiao}, \bibinfo{author}{I.~Grama},
  \bibinfo{author}{Q.~Liu}, \bibinfo{title}{Edgeworth expansion and large
  deviations for the coefficients of products of positive random matrices},
  \bibinfo{journal}{arXiv preprint arXiv:2209.03158} .

\end{thebibliography}


%
%
%
\end{document}